\newcommand\piaut[1]{\mathop{\mathcal{E}}(#1)}
\newcommand\piautp[1]{\mathop{\mathcal{E}_*}(#1)}
\newcommand\piauto[1]{\mathop{\mathcal{E}^+}(#1)}
\newcommand\deckgrp[2]{\mathop{\mathcal{E}_{#1}}(#2)}
\newcommand\Baut[2]{B\operatorname{aut}_{#1}(#2)}
\newcommand\Bautfunc[2]{B_{#1}\aut(#2)}
\newcommand\BSaut[2]{B\operatorname{aut}_{#1}^+(#2)}
\newcommand{\BDiff}{B\operatorname{Diff}}
\newcommand{\cGL}[2]{\GL_{#1}(\Z,#2)}
\newcommand{\inv}[1]{{#1}^\times}
\newcommand{\CC}{\mathscr{C}}
\newcommand{\torelli}[1]{\operatorname{tor}(#1)}
\newcommand{\dquot}{/\hspace{-3pt}/}
\newcommand{\MM}{\mathfrak{M}}
\newcommand\hOrbits{/\mkern-6mu/}
\newcommand{\Xdimpower}[2]{S^{#1 \times #2}}
\newcommand{\rat}{\rho}
\newcommand{\monoid}{\mathcal{A}}
\newcommand{\lspace}{\mathcal{X}}
\newcommand{\rspace}{\mathcal{Y}}
\newcommand{\tensor}{\otimes}
\newcommand{\Sym}{\operatorname{Sym}}
\newcommand{\LL}{\mathbb{L}}
\newcommand{\lie}{\mathfrak}
\DeclareMathOperator\nil{nil}
\DeclareMathOperator\map{map}
\DeclareMathOperator\aut{aut}
\DeclareMathOperator\Diff{Diff}
\DeclareMathOperator\ad{ad}
\DeclareMathOperator\Hom{Hom}
\DeclareMathOperator\id{id}
\DeclareMathOperator\MC{\mathsf{MC}}
\DeclareMathOperator\Aut{Aut}
\DeclareMathOperator\AUT{\mathcal{A}ut}
\DeclareMathOperator\Lie{Lie}
\DeclareMathOperator\GL{\mathsf{GL}}
\DeclareMathOperator\SL{\mathsf{SL}}
\DeclareMathOperator\Sp{\mathsf{Sp}}
\DeclareMathOperator\orth{\mathsf{O}}
\DeclareMathOperator\Der{Der}
\DeclareMathOperator\gr{gr}
\newcommand{\AutNorm}[2]{\Aut_{#1}(#2)}
\newcommand{\AUTNorm}[2]{\AUT_{#1}(#2)}
\newcommand\Lbb{\mathbb{L}}
\newcommand\s{\mathrm{s}}
\newcommand\Q{\ensuremath{\mathbb{Q}}}
\newcommand\Z{\ensuremath{\mathbb{Z}}}
\newcommand\C{\ensuremath{\mathbb{C}}}
\newcommand\Rep{\mathrm{Rep}}
\newcommand\tor{\mathrm{tor}}
\newcommand\CE{\mathrm{CE}}
\newcommand\bU{\mathbb{U}}
\newcommand\tr[2]{#1 \langle #2 \rangle}
\newcommand\Derc{\Der^c}
\newcommand\nilrad{\nil\Derc(L)}
\newcommand\der[1]{\ensuremath{\frac{\partial}{\partial #1}}}
\newcommand{\RP}{\mathbb{R}\mathrm{P}}
\newcommand{\nerve}[1]{\langle {#1} \rangle}
\newcommand{\spatial}[1]{\langle {#1} \rangle}
\theoremstyle{plain}
\newtheorem{thm}{Theorem}[section]
\newtheorem{lemma}[thm]{Lemma}
\newtheorem{prop}[thm]{Proposition}
\newtheorem{cor}[thm]{Corollary}
\theoremstyle{definition}
\newtheorem{defn}[thm]{Definition}
\newtheorem{eg}[thm]{Example}
\theoremstyle{remark}
\newtheorem{rmk}[thm]{Remark}
\newtheorem*{rmk*}{Remark}
\newcommand{\shortenv}[2]{\newenvironment{#1}[1][]{\begin{#2}[{##1}]\pushQED{\qed}}{\popQED\end{#2}}}
\title{Algebraic models for classifying spaces of fibrations}
\author{Alexander Berglund \and Tom\'a\v{s} Zeman}
\address{\scriptsize Department of Mathematics\\
Stockholm University\\
SE-106 91 Stockholm\\
Sweden}
\email{alexb@math.su.se \\ tomas.zeman@math.su.se}
\begin{document}
\begin{abstract}
We prove new structural results for the rational homotopy type of the classifying space $\Baut{}{X}$ of fibrations with fiber a simply connected finite CW-complex $X$.

We first study nilpotent covers of $\Baut{}{X}$ and show that their rational cohomology groups are algebraic representations of the associated transformation groups. For the universal cover, this yields an extension of the Sullivan--Wilkerson theorem to higher homotopy and cohomology groups. For the cover corresponding to the kernel of the homology representation, this proves algebraicity of the cohomology of the homotopy Torelli space.

For the cover that classifies what we call {\it normal unipotent fibrations}, we then prove the stronger result that there exists a nilpotent dg Lie algebra $\lie g(X)$ in algebraic representations that models its equivariant rational homotopy type. This leads to an algebraic model for the space $\Baut{}{X}$ and to a description of its rational cohomology ring as the cohomology of a certain arithmetic group $\Gamma(X)$ with coefficients in the Chevalley--Eilenberg cohomology of $\lie g(X)$. This has strong structural consequences for the cohomology ring and, in certain cases, allows it to be completely determined using invariant theory and calculations with modular forms. We illustrate these points with concrete examples.

As another application, we significantly improve on certain results on self-homotopy equivalences of highly connected even-dimensional manifolds due to Berglund--Madsen, and we prove parallel new results in odd dimensions.
\end{abstract}

\maketitle


\section{Introduction}
In this paper we prove new structural results about the classifying space $\Baut{}{X}$ of the topological monoid of self-homotopy equivalences of a simply connected finite CW-complex $X$ from the point of view of rational homotopy theory.

The fundamental group of $\Baut{}{X}$ may be identified with the group $\piaut{X}$ of homotopy classes of self-homotopy equivalences of $X$. By a deep result due to Sullivan \cite{sul77} and Wilkerson \cite{wil76}, the group $\piaut{X}$ maps onto an arithmetic group with finite kernel. Our first result extends this in two directions: to more general deck transformation groups and to higher homotopy and (co)homology groups.




\begin{thm} \label{thm:A}
Let $\deckgrp{G}{X}$ be the deck transformation group of the cover $\Baut{G}{X}$ of $\Baut{}{X}$ associated to a subgroup $G$ of $\piaut{X}$. 
If $G$ acts nilpotently on $H_*(X;\Q)$, then $\deckgrp{G}{X}$ maps onto an arithmetic group with finite kernel. Moreover, the representations of $\deckgrp{G}{X}$ in the rational homology and simple homotopy groups\footnote{By the \emph{simple} homotopy groups of a connected space, we mean the homotopy groups modulo the action of the fundamental group, see Definition \ref{def:simple homotopy groups}.
} of $\Baut{G}{X}$ are algebraic representations of the ambient algebraic group.
\end{thm}


This appears as Theorem \ref{thm:general algebraicity} in the text. For $G$ the trivial group, this is precisely the result of Sullivan and Wilkerson, enhanced by the statement that the representations of $\piaut{X}$ in the higher homotopy and homology groups of the universal cover of $\Baut{}{X}$ are algebraic (see \S\ref{sec:higher arithmeticity}). For $G$ the kernel of the action of $\piaut{X}$ on $H_*(X;\Z)$, this yields a homotopical counterpart of Kupers--Randal-Williams' algebraicity result for the cohomology of Torelli groups of the manifolds $W_g$ \cite{krw20}---for \emph{arbitrary} simply connected finite complexes $X$ (see \S\ref{sec:torelli}).

Our main result extends Theorem \ref{thm:A} further to a space-level statement, for a particular choice of cover, and we use this to construct an algebraic model for $\Baut{}{X}$.

The problem of finding an algebraic model for $\Baut{}{X}$ has been raised in \cite[\S7.3]{gtht00} and \cite[\S7.2]{laz14}. This problem is highly non-trivial, because the space $\Baut{}{X}$ is not nilpotent in general and therefore not amenable to the classical methods of rational homotopy theory.
Algebraic models are known for the universal cover of \(\Baut{}{X}\) \cite{sul77,tan83} and for certain nilpotent covers \cite{ffm21}, but there are no general results that incorporate the action of the deck transformation groups into such models, let alone integrate such information to structural results about $\Baut{}{X}$. In this paper, we offer a solution. 
Instead of trying to incorporate the action of $\piaut{X}$ into models for the universal cover, which is the approach suggested in \cite{gtht00,laz14}, the key to our solution is pass to a cover with better properties.

\begin{thm} \label{thm:main}
There is a normal cover $\Baut{u}{X}$ of $\Baut{}{X}$, with deck transformation group $\Gamma(X)$, such that
\begin{enumerate}
\item $\Gamma(X)$ is an arithmetic subgroup of a reductive algebraic group $R(X)$,


\item $\Baut{u}{X}$ is $\Gamma(X)$-equivariantly rationally equivalent to the nerve $\nerve{\lie g(X)}$ of a nilpotent dg Lie algebra $\lie g(X)$ of algebraic representations of $R(X)$.

\end{enumerate}
\end{thm}

The proof is given in \S\ref{sec:algebraic lie model}. Here, we say that two connected spaces are \emph{rationally equivalent} if they can be connected by a zig-zag of maps that induce rational isomorphisms\footnote{We call a group homomorphism a rational isomorphism if its kernel is torsion and if for each element $x$ of the target, the $k$th power $x^k$ belongs to the image for some $k\ne 0$,
see Definition \ref{def:Q-iso}. For abelian groups, this is equivalent to inducing an isomorphism after tensoring with $\Q$.} on all homotopy and homology groups.
The \emph{nerve} is the functor from dg Lie algebras to topological spaces that effects the equivalence between the homotopy category of nilpotent dg Lie algebras and the homotopy category of rational nilpotent spaces, see \S\ref{sec:lie models}.
The cover $\Baut{u}{X}$ can be characterized as the classifying space for what we call \emph{normal unipotent fibrations}, see \S\ref{sec:unipotent fibrations}.

As a first application, Theorem \ref{thm:main} yields the following commutative dg algebra model for $\Baut{}{X}$.



\begin{cor} \label{cor:main}The space \(\Baut{}{X}\) is rationally equivalent to the homotopy orbit space \(\nerve{\lie g(X)}_{h \Gamma(X)}\) and there is a zig-zag of quasi-isomorphisms of commutative dg algebras
\[\Omega^*\big(\Baut{}{X}\big)\sim \Omega^*\big(\Gamma(X),C_{\CE}^*(\lie g(X))\big).\]
\end{cor}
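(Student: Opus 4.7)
The plan is to derive the Corollary from Theorem \ref{thm:main} by taking homotopy orbits. By construction of $\Gamma(X)$ and $\Baut{u}{X}$, there is a fibration sequence
\[\Baut{u}{X} \to \Baut{}{X} \to B\Gamma(X),\]
so that $\Baut{}{X} \simeq \Baut{u}{X}_{h\Gamma(X)}$. Theorem \ref{thm:main} supplies a $\Gamma(X)$-equivariant zigzag of rational equivalences between $\Baut{u}{X}$ and $\langle \lie g(X) \rangle$; taking homotopy orbits of this zigzag produces a natural comparison map
\[\Baut{}{X} \longrightarrow \langle \lie g(X) \rangle_{h\Gamma(X)},\]
and the first assertion of the corollary reduces to showing that this map is a rational equivalence.

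To verify this, I would compare the Serre spectral sequences of the two Borel fibrations over $B\Gamma(X)$. Their $E_2$-pages are
\[H^p(\Gamma(X); H^q(\Baut{u}{X}; \Q)) \text{ and } H^p(\Gamma(X); H^q(\langle \lie g(X) \rangle; \Q)),\]
and Theorem \ref{thm:main} identifies these coefficient systems as isomorphic $\Gamma(X)$-representations. Hence the comparison induces an isomorphism on $E_2$ and therefore on rational cohomology of the total spaces. To upgrade the cohomology isomorphism to a rational equivalence of spaces, I would invoke the virtual nilpotency of $\Baut{u}{X}$ asserted in Theorem \ref{thm:main}: passing to a torsion-free finite-index subgroup $\Gamma' \subset \Gamma(X)$ over which the fiber is nilpotent, fiberwise rationalization of the restricted Borel construction $\Baut{u}{X}_{h\Gamma'}$ is homotopically well-behaved in the usual sense, and the residual finite quotient $\Gamma(X)/\Gamma'$ is invisible rationally.

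For the second assertion, I would use that $C_{CE}^*(\lie g(X))$ is by construction a cdga model for the nerve $\langle \lie g(X) \rangle$, carrying a $\Gamma(X)$-action through the algebraic $R(X)$-action on $\lie g(X)$. A standard model for the polynomial de Rham algebra of a Borel construction $Y_{h\Gamma}$ is the equivariant cochain complex $\Omega^*(\Gamma, A)$ associated to a $\Gamma$-equivariant cdga model $A$ for $Y$, built from a bar-type resolution of $B\Gamma$ with coefficients in $A$. Applied to $Y = \langle \lie g(X) \rangle$ and $A = C_{CE}^*(\lie g(X))$, this yields the quasi-isomorphism $\Omega^*(\Baut{}{X}) \sim \Omega^*(\Gamma(X), C_{CE}^*(\lie g(X)))$ after combining with the first assertion.

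The main obstacle is promoting an equivariant rational equivalence of fibers to a rational equivalence of total spaces when the base $B\Gamma(X)$ is far from simply connected. This is precisely where the virtual nilpotency of $\Baut{u}{X}$ from Theorem \ref{thm:main} does the decisive work, by reducing matters to a finite-index subgroup over which fiberwise rationalization is standard and then descending through a finite quotient that contributes nothing to rational homotopy. The remaining steps are bookkeeping with equivariant cdga models and the comparison of Serre spectral sequences.
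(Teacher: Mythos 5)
Your proof follows essentially the same route as the paper: the first assertion is obtained by taking homotopy orbits of the equivariant rational equivalence supplied by Theorem~\ref{thm:main} and applying the Serre spectral sequence of the two Borel fibrations over $B\Gamma(X)$, and the second assertion is obtained from the first via the equivariant-cochains model of Proposition~\ref{prop:commutative cochains} together with the natural quasi-isomorphism $C_{CE}^*(\lie g(X)) \to \Omega^*(\langle\lie g(X)\rangle)$.

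One remark on the extra ``upgrade'' step: it is unnecessary, and as formulated it is also slightly off. Recall that the paper \emph{defines} a rational equivalence to be a map inducing an isomorphism on $H_*(-;\Q)$. Once the Serre spectral sequence comparison shows that each leg of the zigzag between $\Baut{u}{X}_{h\Gamma(X)}$ and $\langle\lie g(X)\rangle_{h\Gamma(X)}$ is a rational homology isomorphism, you are done with the first assertion; no passage to a finite-index subgroup is required, and nilpotence of the base or fiber plays no role here. (Separately, the phrasing ``a torsion-free finite-index subgroup $\Gamma' \subset \Gamma(X)$ over which the fiber is nilpotent'' conflates two different groups: the fiber $\Baut{u}{X}$ has fundamental group $\piautu{X}$, not a subgroup of $\Gamma(X)$, and its nilpotence is unaffected by restricting the group acting on it. Virtual nilpotence of $\Baut{u}{X}$ is used elsewhere in the paper, but not in the derivation of this corollary.) Dropping that paragraph leaves a clean proof that matches the paper's.
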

This is proved as Theorem \ref{thm:cdga model} in the text.
Here $\Omega^*(B)$ stands for Sullivan's polynomial differential forms on $B$. It is a commutative dg algebra model for the singular cochains $C^*(B;\Q)$.
The right-hand side is a commutative dg algebra model for the cochains on the group $\Gamma(X)$ with coefficients in the Chevalley--Eilenberg cochains of $\lie g (X)$, see \S\ref{sec:cdga models}.



Theorem \ref{thm:main} can be interpreted as a `space-level' enhancement of Theorem \ref{thm:A} in the following sense: not only are the rational cohomology groups of $\Baut{u}{X}$ algebraic representations of the ambient algebraic group $R(X)$, but the entire \emph{rational homotopy type} of $\Baut{u}{X}$, represented by the dg Lie algebra $\lie g(X)$, is an algebraic representation of $R(X)$. This subsumes algebraicity of cohomology, because there is an isomorphism of $\Gamma(X)$-modules
$$H^*(\Baut{u}{X};\Q) \cong H_{CE}^*(\lie g(X))$$
and the right-hand side is obviously an algebraic representation of $R(X)$ if $\lie g(X)$ is. However, space-level algebraicity has much stronger consequences than algebraicity of cohomology. Together with semisimplicity of algebraic representations of reductive groups, it implies the following result, which reduces the computation of the rational cohomology ring of $\Baut{}{X}$ to the computation of Chevalley--Eilenberg cohomology and cohomology of arithmetic groups with coefficients in algebraic representations.

\begin{cor} \label{cor:cohomology ring}
There is an isomorphism of graded algebras
\begin{equation} \label{eq:alg iso}
H^*\big(\Baut{}{X};\Q\big) \cong H^*\big(\Gamma(X),H_{\CE}^*(\mathfrak{g}(X))\big).\qedhere
\end{equation}
\end{cor}
The proof is given in \S\ref{sec:cohomology}.
This result implies, but is stronger than, collapse of the rational Serre spectral sequence of the homotopy fiber sequence
\begin{equation} \label{eq:hfs}
\Baut{u}{X} \to \Baut{}{X} \to B\Gamma(X).
\end{equation}
Indeed, collapse of the spectral sequence only implies an isomorphism of algebras after passing to the associated graded algebra of some filtration, whereas Corollary \ref{cor:cohomology ring} says that $H^*(\Baut{}{X};\Q)$ is isomorphic to the $E_2$-page of this spectral sequence as a graded algebra. In particular, this algebra admits a bigrading by
\begin{equation} \label{eq:bigrading}
H^{p,q} = H^p\big(\Gamma(X),H_{\CE}^q(\mathfrak{g}(X))\big).
\end{equation}
Combined with the finiteness of the virtual cohomological dimension of arithmetic groups \cite{bs73}, this implies the following result, which reduces the computation of the ring $H^*(\Baut{}{X};\Q)$ modulo nilpotent elements to invariant theory.

\begin{cor} \label{cor:mod nilradical}
The ring homomorphism
\begin{equation} \label{eq:split surjection}
H^*(\Baut{}{X};\Q) \to H^*\big(\Baut{u}{X};\Q\big)^{\Gamma(X)}
\end{equation}
is split surjective and its kernel $I$ is a nilpotent ideal such that $I^n = 0$ for all $n>vcd(\Gamma(X))$. In particular, \eqref{eq:split surjection} induces an isomorphism modulo nilradicals. In other words, the algebraic variety of the graded commutative ring $H^*(\Baut{}{X};\Q)$ is isomorphic to that of the invariant ring $H_{\CE}^*(\lie g(X))^{\Gamma(X)}$.
\end{cor}

\begin{proof}
Under the isomorphism \eqref{eq:alg iso}, the homomorphism \eqref{eq:split surjection} corresponds to the projection $H^{*,*} \to H^{0,*}$, which is clearly split. The kernel is the ideal $H^{\geq 1,*}$, the $n$th power of which is contained in $H^{\geq n,*}$, which vanishes for $n> vcd(\Gamma(X))$.
\end{proof}

Let us point out another non-trivial facet of Corollary \ref{cor:cohomology ring}.

\begin{cor}
The ring homomorphism
\[ H^*(\Gamma(X),\Q) \to H^*(\Baut{}{X};\Q)\]
is split injective.
\end{cor}

\begin{proof}
The homomorphism in question corresponds to the inclusion of the subring $H^{*,0}$ into $H^{*,*}$, which is clearly split.
\end{proof}

This means in particular that all cohomology classes of the arithmetic group $\Gamma(X)$ are faithfully represented as characteristic classes of fibrations with fiber $X$. This is especially striking in view of the fact that many arithmetic groups can be realized as $\Gamma(X)$ for some $X$, \emph{cf.}~\cite[Theorem 10.3(iv)]{sul77}.

We have chosen to state our main result as an existence theorem in order to highlight the strong consequences of the mere existence of a dg Lie model of algebraic representations, but the ingredients $R(X)$, $\Gamma(X)$ and $\lie g(X)$ can be given concrete descriptions, see \S\ref{sec:concrete descriptions}. In \S\ref{sec:case studies} we use these concrete descriptions to make several explicit computations. Let us discuss one of these computations here.
For the $n$-fold product of a $d$-dimensional sphere, $\Xdimpower{d}{n} = S^d \times \cdots \times S^d$,
Corollary \ref{cor:cohomology ring} assumes the following form.

\begin{thm} \label{thm:X_n intro}
For $d$ odd, there is an isomorphism of graded algebras
\begin{equation} \label{eq:X_n intro}
H^*(B\aut(\Xdimpower{d}{n});\Q) \cong H^*\Big(\Gamma(\Xdimpower{d}{n}),\Sym^\bullet\big(V_n[d+1]\big) \Big),
\end{equation}
where $V_n[d+1]$ denotes the standard representation of $\GL_n(\Q)$ put in degree $d+1$,
\[\Gamma(\Xdimpower{d}{n}) =
\begin{cases}
\GL_n(\Z), & d=1,3,7, \\
\GL_n^\Sigma(\Z), & d\ne 1,3,7,
\end{cases}
\]
and $\GL_n^\Sigma(\Z)\leqslant \GL_n(\Z)$ is the congruence subgroup of matrices with exactly one odd entry in each row.
\end{thm}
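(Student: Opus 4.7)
The plan is to specialise Corollary~\ref{cor:cohomology ring} and Theorem~\ref{thm:main lie model}(ii) to the space $X = \Xdimpower{d}{n}$ with $d$ odd, and then make each ingredient concrete. Since $d$ is odd, $X$ is formal and its minimal Sullivan model is the free graded-commutative algebra $\Lambda = \Lambda(V)$ on an $n$-dimensional vector space $V$ sitting in cohomological degree $d$, with zero differential; the $\piaut{X_\Q}$-action factors through $\GL(V) \cong \GL_n(\Q)$. Because $H_*(X;\Q) = \bigoplus_{j=0}^n \Lambda^j V^\vee$ is semisimple as a $\piaut{X_\Q}$-module, Corollary~\ref{cor:simplification} applies and identifies $R(X) \cong \GL_n(\Q)$, while Theorem~\ref{thm:groups} realises $\Gamma(X)$ as the image of $\piaut{X} \to \GL_n(\Z)$. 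Since $\pi_*(X)\otimes\Q$ is finite-dimensional, Theorem~\ref{thm:main lie model}(ii) supplies an algebraic Lie model $\lie g(X) = \nil\Der\Lambda$ for $\Baut{u}{X}$.

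Next I would compute $\lie g(X)$ and its Chevalley--Eilenberg cohomology. Using the weight decomposition of $\Der\Lambda$---a derivation of weight $j$ maps $V \to \Lambda^j V$ and has cohomological degree $(j-1)d$---the Levi factor lies in weight one as $\mathfrak{gl}(V)$, and $\nil\Der\Lambda$ records the Lie algebra of the unipotent radical of the simplicial automorphism group of $\Lambda$. For the trivially-differentiated model $\Lambda(V)$ this unipotent part reduces to the abelian Lie algebra isomorphic to $V_n$ concentrated in degree $d$. Its Chevalley--Eilenberg cochain algebra is therefore freely generated by $V_n$ shifted up by one, and because $d+1$ is even this free graded-commutative algebra is polynomial:
\[
C_{CE}^*(\lie g(X)) \;\cong\; \Sym^\bullet\bigl(V_n[d+1]\bigr),
\]
with the standard $\GL_n$-representation preserved throughout.

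To determine $\Gamma(X)$ I analyse the image of $\piaut{X}$ in $\GL_n(\Z)$. Any self-map of $X$ is a tuple of $n$ maps $f_i \colon X \to S^d$; since $X^{(2d-1)} = \bigvee_{j=1}^n S^d$, each $f_i$ is determined on the $(2d-1)$-skeleton by a row of integers $(m_{ij})_j$, and extending $f_i$ across the $2d$-cells---attached via the Whitehead products $[\iota_j,\iota_k]$---requires $m_{ij}m_{ik}[\iota,\iota]=0$ in $\pi_{2d-1}(S^d)$ for $j\ne k$. If $d \in \{1,3,7\}$ then the Hopf H-space structure on $S^d$ forces $[\iota,\iota]=0$, so no constraint survives and $\Gamma(X) = \GL_n(\Z)$. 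If $d$ is odd with $d \notin \{1,3,7\}$ then Hilton's theorem shows that $[\iota,\iota]$ has order exactly $2$ in $\pi_{2d-1}(S^d)$, the condition becomes $m_{ij}m_{ik}\equiv 0\pmod 2$ for $j\ne k$, and combined with invertibility over $\Z/2$ this forces the mod-$2$ reduction of $(m_{ij})$ to be a permutation matrix---that is, $\Gamma(X) = \GL_n^{\Sigma}(\Z)$. A short cell-by-cell induction using Hilton's formula to control the higher-order attaching Whitehead products confirms that no further obstructions arise.

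Substituting these identifications into the isomorphism of Corollary~\ref{cor:cohomology ring} yields the stated formula. The main technical obstacle I anticipate is the Lie-theoretic identification of $\nil\Der\Lambda$ with the small abelian algebra $V_n[d]$: in the conventions of~\S\ref{sec:algebraic lie models} one must explain why the higher-weight derivations of the formal model $\Lambda(V)$ do not contribute to the unipotent radical of the relevant simplicial group of derived automorphisms, a subtle point given that $\Der\Lambda$ a priori has many non-zero graded components. A secondary delicacy is the exhaustiveness of the mod-$2$ Whitehead obstruction when $d \notin \{1,3,7\}$, which is what the cell-by-cell argument mentioned above handles.
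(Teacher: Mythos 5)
Your high-level route matches the paper's: identify $R(X) \cong \GL_n(\Q)$ via the Sullivan model $\Lambda = \Lambda(x_1,\ldots,x_n)$, compute the algebraic Lie model $\lie g(X) = \nil\Der\Lambda$, observe that its Chevalley--Eilenberg cochain algebra is $\Sym(V_n[d+1])$, determine $\Gamma(X)$, and then quote Corollary~\ref{cor:cohomology ring}. Two points warrant comment.

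First, a small sign/duality slip: the nilradical $\nil\Der\Lambda$ is spanned by the derivations $\partial/\partial x_i$, which form the \emph{dual} of the standard representation sitting in homological degree $d$ (the paper's $V_n^*[-d]$), not $V_n$ as you wrote; of course, dualizing and shifting then produces $C_{CE}^* \cong \Sym(V_n[d+1])$, so your final identification is correct. Also note that Corollary~\ref{cor:simplification} alone only tells you $R(X)$ is the image of $\piaut{X_\Q}$ inside $\GL(H_*(X;\Q))$; identifying that image as $\GL_n(\Q)$ still requires an explicit computation of $\Aut\Lambda$ (which the paper carries out and which is easy, but is not supplied by the corollary).

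Second, and more substantively: your determination of $\Gamma(\Xdimpower{d}{n})$ is genuinely different from the paper's, and it has a real gap. You propose to read off the constraint $m_{ij} m_{ik} \equiv 0 \pmod 2$ (for $d$ odd $\neq 1,3,7$) from the obstruction to extending over the $2d$-cells, and then assert that ``a short cell-by-cell induction using Hilton's formula \ldots confirms that no further obstructions arise'' for the $3d$-, $4d$-, \ldots, $nd$-cells. Those higher cells are attached by $k$-fold higher-order Whitehead products, and the obstruction to extending across them is \emph{not} simply controlled by the pairwise Whitehead product vanishing --- one must contend with indeterminacy in the extension choices, and the relevant higher Whitehead products need not be determined by the lower-order data in any straightforward way. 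The paper sidesteps obstruction theory entirely via a constructive argument (Lemmas~\ref{lemma:realizable}, \ref{lemma:realizable2}, \ref{lemma:realizable3} and Proposition~\ref{prop:realizability}): it shows $(1,2)$ is realizable because $[\iota_d, 2\iota_d]=0$, then bootstraps to $(1,2,\ldots,2)$ by composing and projecting realizable maps, and finally to an arbitrary row with at most one odd entry. This avoids ever computing a higher-order obstruction. Your direct obstruction-theoretic route is not wrong in spirit, but as written the vanishing of higher obstructions is unsupported and would require substantial extra work to make rigorous; I would recommend either filling that in or replacing it with the paper's elementary closure lemmas, which buy the same conclusion at much lower cost.
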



For $n=2$ the right-hand side of \eqref{eq:X_n intro} can be computed in terms of modular forms via the Eichler--Shimura isomorphism.  
We also carry out computations for $n=3$, but for larger $n$ the cohomology of $\GL_n(\Z)$, or its congruence subgroups, with coefficients in algebraic representations is not fully known. See \S\ref{sec:products of spheres} for more details and a further discussion.

The above example illustrates that even in cases where $\Gamma(X)$ and $H_{\CE}^*(\lie g(X))$ can be described explicitly, a complete computation of the right-hand side of \eqref{eq:alg iso} is in general out of reach, due to the difficulty of calculating the cohomology of arithmetic groups. This suggests a different perspective on the isomorphism \eqref{eq:alg iso}. Rather than interpreting it as a computation of $H^*(\Baut{}{X};\Q)$, it tells us that cohomology classes of arithmetic groups in the right-hand side, say classes constructed using automorphic forms, can in principle be represented as characteristic classes of fibrations. Connections between characteristic classes and automorphic forms have been observed before in special cases (see \emph{e.g.}~\cite{fty88}). Our results show that this is not an isolated phenomenon. This suggests that there is a deep connection between characteristic classes of fibrations and cohomology of arithmetic groups.



Another important application of our results is that they lead to significant simplifications and improvements of certain key results of Berglund--Madsen \cite{bm20} about the cohomology of $\Baut{}{W_g}$, and related spaces, for the manifold
\[ W_g = \#^g S^d \times S^d.\]
In fact, this was the original motivation for this work. This is discussed in \S\ref{sec:highly connected manifolds}.
Our methods also yield parallel results for highly connected odd-dimensional manifolds that were unattainable by the methods of \cite{bm20}, see \S\ref{sec:odd}. This is used in the work of Stoll \cite{stoll}.


In the final section, \S\ref{sec:non-formal}, we study an example that, among other things, illuminates the advantage of working over $\Gamma(X)$ rather than $\piaut{X}$.



\subsection{Some comments on related work} \label{sec:relation to other works}


The first paragraph on p.314 in Sullivan's \cite{sul77} contains, without proof, the idea of modeling $\Baut{}{|\Lambda|}$, where $|\Lambda|$ is the realization of a minimal Sullivan algebra $\Lambda$, by taking the nerve of the maximal nilpotent ideal of $\Der \Lambda$ modulo the action of the reductive part of $\Aut \Lambda$. This idea seems to have been largely overlooked in the subsequent rational homotopy theory literature; we are not aware of any source where this idea and its consequences have been properly developed (and in fact we only became aware of this paragraph in the final stages of writing this paper). Theorem \ref{thm:sullivan} could be viewed as giving a precise formulation and proof. The key points of the present paper---the treatment of $\Baut{}{X}$ for non-rational $X$, the algebraicity of the cohomology of nilpotent covers of $\Baut{}{X}$, the existence of algebraic Lie models and its strong consequences for the structure of the cohomology ring of $\Baut{}{X}$---are to the best of our knowledge new. Our results could be regarded as a strong vindication of Sullivan's idea.

We were inspired by Oprea \cite{oprea84} (via Burghelea \cite{bur84}) for the idea of passing to the maximal reductive quotient of \(\piaut{X_\Q}\) to rectify homotopy actions on the algebraic models. The idea of studying the fiberwise rationalization of $\Baut{}{X} \to B\Gamma(X)$ as we do here is similar in spirit to studying relative Malcev completions of mapping class groups as done by Hain \cite{hain97}. The algebraicity result for the cohomology of the Torelli group of $W_{g}$ of Kupers and Randal-Williams \cite{krw20} inspired us to study similar questions for $\Baut{}{X}$.


Lazarev \cite[Theorem 5.1]{laz14} constructs Lie models for the universal cover of $\Baut{}{X_\Q}$ and shows that the action of the Lie algebra of $\piaut{X_\Q}$ on the higher homotopy groups of $\Baut{}{X_\Q}$ can be computed in terms of Chevalley--Eilenberg and Harrison cohomology. However, he does not address algebraicity of the representations and he does not construct group actions on the Lie models; in fact he raises this as a problem \cite[\S7.2]{laz14}.

F\'elix--Fuentes--Murillo \cite{ffm21} construct a Lie model for the space $B\aut_G(X)$ when $G\leqslant \piaut{X}$ is a subgroup that acts nilpotently on $H_*(X; \Z)$. We recover this Lie model (see Corollary \ref{cor:nonequivariant lie model} and Remark \ref{remark:recover ffm}). A crucial advantage of our approach is that it lets us incorporate the action of the deck transformation group. This aspect is not addressed in \cite{ffm21}. This is what allows to construct an algebraic model for the full space $\Baut{}{X}$ and not only for nilpotent covers of it.

\subsection*{Acknowledgements}
We thank Jonas Bergstr\"om, Diarmuid Crowley, Robin Stoll, and Torbj\"orn Tambour for useful discussions and we thank Richard Hain for helpful comments. We also thank the referee, whose comments led to an improvement of the structure of the paper.

This research was supported by the Knut and Alice Wallenberg foundation through grant no.~2017.0416 and by the Swedish Research Council through grant no.~2021-03946.

\section{Background and preliminaries}

\subsection{Localization of nilpotent groups and spaces}
We begin by introducing some terminology and recalling some facts about localizations of nilpotent groups and spaces, mainly following \cite{hmr75}.


\begin{defn} \label{def:Q-iso}
We call a group $G$ uniquely divisible, or $\Q$-local, if the equation $x^k = a$ has a unique solution $x\in G$ for every non-zero integer $k$ and every $a\in G$.
For a group homomorphism $f\colon G\to H$, we use the following terminology:
\begin{enumerate}
\item $f$ is $\Q$-injective if every element of $\ker(f)$ has finite order.
\item $f$ is $\Q$-surjective if for all $x\in H$, we have $x^k\in \operatorname{im}(f)$ for some $k\ne 0$.
\item $f$ is a $\Q$-isomorphism if it is both $\Q$-injective and $\Q$-surjective.
\end{enumerate}
\end{defn}
Every nilpotent group $G$ admits a $\Q$-localization $r\colon G\to G_\Q$, characterized up to isomorphism by the properties that $G_\Q$ is a $\Q$-local nilpotent group and that $r$ is a $\Q$-isomorphism (see \cite[p.7]{hmr75}). A homomorphism $G\to H$ between nilpotent groups is a $\Q$-isomorphism if and only if the induced homomorphism $G_\Q\to H_\Q$ is an isomorphism. For abelian groups $G$, one has $G_\Q \cong G\tensor \Q$.



Recall that a connected topological space $X$ is called nilpotent if the group $\pi_1(X)$ is nilpotent and if its action on $\pi_n(X)$ is nilpotent for every $n\geq 2$, in the sense that there is a filtration of $\pi_n(X)$ by $\pi_1(X)$-submodules such that the action on the filtration quotients is trivial.

A nilpotent space $X$ is called $\Q$-local if the group $\pi_n(X)$ is $\Q$-local for each $n\geq 1$. Every nilpotent space $X$ admits a $\Q$-localization, or rationalization, $r\colon X\to X_\Q$, characterized up to homotopy by the properties that $X_\Q$ is a $\Q$-local nilpotent space and that $\pi_n(r)\colon \pi_n(X) \to \pi_n(X_\Q)$ is a $\Q$-localization for every $n$.


\begin{defn} \label{def:rational equivalence}
We call a map $f\colon X\to Y$ between connected topological spaces a \emph{rational homotopy equivalence} if $\pi_n(f)\colon \pi_n(X)\to \pi_n(Y)$ is a $\Q$-isomorphism for all $n$, and a \emph{rational homology equivalence} if $H_n(f)\colon H_n(X;\Q) \to H_n(Y;\Q)$ is an isomorphism for all $n$.

We say that $f$ is a \emph{rational equivalence} if it is both a rational homotopy equivalence and a rational homology equivalence.
\end{defn}


It is well-known that a map between nilpotent spaces is a rational homotopy equivalence if and only if it is a rational homology equivalence. We will need an extension of this fact to virtually nilpotent spaces. Recall \cite{ddk77,ddk81} that a connected space $X$ is called \emph{virtually nilpotent} if $\pi_1(X)$ has a nilpotent subgroup of finite index and for each $n\geq 2$, there is a finite-index subgroup of $\pi_1(X)$ that acts nilpotently on $\pi_n(X)$. Equivalently, $X$ is virtually nilpotent if each Postnikov section $P_n X$ of $X$ admits a finite cover $E\to P_n X$ such that $E$ is nilpotent.



\begin{lemma} \label{lemma:virtual nilpotence lemma}
Let $f\colon X\to Y$ be a map from a virtually nilpotent space $X$ to a nilpotent space $Y$. If $f$ is a rational homotopy equivalence, then $f$ is a rational homology equivalence.
\end{lemma}

\begin{proof}
It suffices to show that $f$ induces a rational homology equivalence on each Postnikov section, so we may without loss of generality assume that $X$ admits a finite cover $p\colon E\to X$ such that $E$ is nilpotent. Clearly, $p$ is a rational homotopy equivalence. The composite $fp\colon E\to Y$ is then a rational homotopy equivalence between nilpotent spaces, so it is a rational homology equivalence. Since $p$ is a finite cover, a transfer argument shows that $H_*(p;\Q)\colon H_*(E;\Q)\to H_*(X;\Q)$ is surjective. We just saw that $H_*(f;\Q)\circ H_*(p;\Q) = H_*(fp;\Q)$ is an isomorphism, so $H_*(p;\Q)$ must be injective as well. It follows that $H_*(f;\Q)$ is an isomorphism.
\end{proof}

\begin{rmk}
The converse is true if $X$ is nilpotent, but false in general: the map $\RP^2 \to *$ is a rational homology equivalence from a virtually nilpotent space to a nilpotent space, but the induced map on $\pi_2$ is not a $\Q$-isomorphism.

Also, one can not relax nilpotence of $Y$ to virtual nilpotence. The universal cover $S^2 \to \RP^2$ provides an example of a map from a nilpotent space to a virtually nilpotent space which is a rational homotopy equivalence but not a rational homology equivalence.
\end{rmk}

\subsection{Affine algebraic groups and arithmetic groups}
In this section we will collect the basic facts about affine algebraic groups over $\Q$ and arithmetic groups that we will need, following mainly \cite{hoch71,mil17,serre79}. 
All algebras, vector spaces, undecorated tensor products, affine schemes, \emph{etc.}, should be taken to be over \(\Q\) unless explicitly specified otherwise.

Recall that an affine algebraic group is a group object in the category of affine schemes.
A linear algebraic group is an algebraic subgroup of \(\GL_n\) for some \(n\).
It is a well-known fact that every affine algebraic group admits a faithful algebraic representation, \emph{cf.}~\cite[Corollary~4.10]{mil17}, so the notions of affine algebraic group and linear algebraic group essentially coincide.

\subsubsection{Unipotent and reductive groups}\label{sec: unipotent and reductive gps}
Recall that a representation $V$ is called unipotent (or nilpotent) if there is a sequence of subrepresentations,
$$0 = V_0 \subseteq V_1 \subseteq \cdots \subseteq V_r = V,$$
such that $V_i/V_{i-1}$ is a trivial representation for every $i$.

An affine algebraic group \(U\) is called unipotent if every algebraic representation $V$ of \(U\) is unipotent.
Equivalently, \(U\) is unipotent if and only if it is an algebraic subgroup of the group \(\bU_n\) of \(n \times n\) upper triangular matrices with \(1\)s along the diagonal for some \(n\), \emph{cf.}~\cite[Theorem~14.5]{mil17}. The $\Q$-points $U(\Q)$ of a unipotent group $U$ is in particular a nilpotent and uniquely divisible group.

Every affine algebraic group \(G\) admits a largest normal unipotent subgroup, called the unipotent radical \(G_u\) of \(G\) (\emph{cf.}~\cite[Theorem~10.5]{hoch71}).
An affine algebraic group \(G\) is called reductive if \(G_u\) is trivial.
The representation theory of reductive groups in characteristic zero is particularly well-behaved:
\begin{sthm}[{\cite[Corollary~22.43]{mil17}, see also \cite[p.78]{hoch71}}] \label{thm:linearly reductive}
Every finite-dimensional algebraic representation of a reductive group is semisimple.
\end{sthm}
In other words, the category \(\Rep_\Q(G)\) of finite-dimensional algebraic representations of \(G\) is a semisimple abelian category whenever \(G\) is reductive.

Another feature of affine algebraic groups in characterstic zero is the existence of Levi decompositions. There is a canonical extension
\begin{equation} \label{eq:Levi}
    1 \longrightarrow G_u \longrightarrow G \longrightarrow G/G_u \longrightarrow 1
\end{equation}
of the maximal reductive quotient of \(G\) by the unipotent radical.

\begin{sthm}[{\cite[Theorem~14.2]{hoch71}}] \label{thm:Levi}
The extension \ref{eq:Levi} is (non-canonically) split and moreover any two splittings are conjugate in the action of \(G_u\).
\end{sthm}

The following lemma gives a concrete description of the unipotent radical and the maximal reductive quotient. It is presumably well-known, but we include a proof as we have not found the precise statement we give here in the literature.

Recall that a composition series of a representation $V$ is a filtration
\[
    0 = V_0 \subseteq \cdots \subseteq V_n = V
\]
by subrepresentations such that each \(V_i/V_{i-1}\) is a simple representation.
The `semisimplification' of $V$ is the associated graded representation
\[V^{ss} = \bigoplus_{i=1}^n V_i/V_{i-1}.\]
It is semisimple by construction and its isomorphism type is independent of the choice of composition series by the Jordan--H\"older theorem. Note that $V$ is isomorphic to $V^{ss}$ if and only if $V$ is semisimple.


\begin{lemma} \label{lemma:gr V}
Let $G$ be an affine algebraic group defined over $\Q$. If $V$ is a representation of $G$ with unipotent kernel, then the unipotent radical $G_u$ and the maximal reductive quotient $G/G_u$ may be identified with the kernel and the image of the homomorphism $G\to \GL(V^{ss})$, respectively.
\end{lemma}

\begin{proof}
Let $K$ and $N$ denote the kernel of the action of $G$ on $V$ and $V^{ss}$, respectively.  Clearly, both $K$ and $N$ are normal in $G$ and $K\leqslant N$. Furthermore, $K$ is unipotent by hypothesis, and $N/K$ is unipotent because $V$ is a faithful unipotent representation of it. Since unipotent groups are closed under extensions (\emph{cf.}~\cite[Corollary 14.7]{mil17}), it follows that $N$ is unipotent.

To show that $N$ is the unipotent radical, we need to show that every normal unipotent subgroup $U$ of $G$ acts trivially on $V^{ss}$. For this, it suffices to show that $U$ acts trivially on every simple $G$-representation $W$. Since $U$ is unipotent, there is a non-zero $w\in W$ that is fixed by $U$ (\emph{cf.}~\cite[Proposition 14.1]{mil17}). Since $U$ is normal in $G$, it also fixes $gw$ for every $g\in G$. Indeed, for every $u\in U$ we have $g^{-1}ug\in U$, whence $(g^{-1}ug)w =w$ so that $ugw=gw$. Since $W$ is simple, $w$ generates $W$ as a $G$-module, so $U$ acts trivially on $W$.
\end{proof}

This has the following consequence:
\begin{lemma}\label{lemma: uniradical}
Let $G$ be an affine algebraic group defined over $\Q$ with Lie algebra $\lie g$. If $V$ is a representation of $G$ with unipotent kernel, then the Lie algebra of the unipotent radical, \(\Lie G_u\), may be identified with the maximal ideal
\[\nil_V \lie g \subseteq \lie g,\]
consisting of elements which act nilpotently on \(V\).
\end{lemma}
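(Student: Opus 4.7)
The plan is to combine Lemma \ref{lemma:gr V} with functoriality of $\Lie$ to identify $\Lie G_u$ with $\ker(\lie g \to \lie{gl}(\gr V))$, and then to match this kernel with $\nil_V \lie g$ via a reductive Lie algebra argument. By Lemma \ref{lemma:gr V}, $G_u$ is the kernel of $\rho\colon G \to \GL(\gr V)$, and since the $\Lie$ functor preserves kernels of algebraic group homomorphisms in characteristic zero, we obtain $\Lie G_u = \ker(d\rho\colon \lie g \to \lie{gl}(\gr V))$. This is an ideal of $\lie g$, and every $x$ in it sends $V_i$ into $V_{i-1}$ for all $i$, so $x^n = 0$ on $V$ where $n$ is the length of the composition series. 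Thus $\Lie G_u$ is already an ideal of $\lie g$ consisting of elements acting nilpotently on $V$; it remains to show it is the maximal such ideal.

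For the converse, I would take an arbitrary ideal $\lie a \triangleleft \lie g$ whose elements act nilpotently on $V$ and show $\lie a \subseteq \Lie G_u$. Since the composition series $V_\bullet$ is $G$-stable and hence $\lie g$-stable, every $x \in \lie a$ preserves it and induces a nilpotent endomorphism on each quotient $V_i/V_{i-1}$, so $\lie a$ acts by nilpotent endomorphisms on $\gr V$. The image $\bar\lie a$ of $\lie a$ in $\bar\lie g = \lie g/\Lie G_u \cong \Lie(G/G_u)$ is then an ideal of the reductive Lie algebra $\bar\lie g$, still acting by nilpotent endomorphisms on $\gr V$.

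The key input is then the standard fact that an ideal $\bar\lie a$ of a reductive Lie algebra that acts by nilpotent endomorphisms on a finite-dimensional algebraic representation must act trivially. Using the decomposition $\bar\lie g = Z(\bar\lie g) \oplus [\bar\lie g,\bar\lie g]$ with the second summand semisimple, the ideal $\bar\lie a$ splits as the direct sum of a central part and a sum of simple ideals of $[\bar\lie g,\bar\lie g]$. The central part lies in the Lie algebra of a torus in $G/G_u$ and hence acts semisimply on any algebraic representation; combined with nilpotence, it must act as zero. The semisimple summand has nilpotent image in $\lie{gl}(\gr V)$ by Engel's theorem, but this image is also a quotient of a semisimple Lie algebra, hence semisimple; a Lie algebra that is simultaneously semisimple and nilpotent is zero. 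Consequently $\bar\lie a$ annihilates $\gr V$, so $\lie a \subseteq \Lie G_u$, as required. The main obstacle I anticipate is making the reductive decomposition of ideals and the semisimplicity of torus actions on algebraic representations precise, but both are standard; the conceptual content lies in the Engel-plus-semisimplicity dichotomy provided by the Levi structure of $G/G_u$.
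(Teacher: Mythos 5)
Your proof is correct, and for the reverse inclusion it takes a genuinely different route from the paper. Both proofs share the first step: using Lemma~\ref{lemma:gr V} and the fact that $\Lie$ commutes with kernels to identify $\Lie G_u$ with $\ker(\lie g \to \lie{gl}(\gr V))$, which sits inside $\nil_V\lie g$. For the reverse inclusion the paper works on the group side: it applies Engel's theorem to the image $\lie h'$ of $\nil_V\lie g$ in $\lie{gl}(V)$, invokes the correspondence between nilpotent Lie subalgebras and unipotent algebraic subgroups (\cite[Theorem~14.37]{mil17}) to produce a unipotent $H' \leqslant G/K$ with $\Lie H' = \lie h'$, pulls back to a normal unipotent $H \leqslant G$, and concludes $\nil_V\lie g = \Lie H \subseteq \Lie G_u$. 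You instead pass to the reductive Lie algebra $\bar\lie g = \lie g/\Lie G_u \cong \Lie(G/G_u)$ and show directly that the image $\bar\lie a$ of any ideal $\lie a$ of nilpotently acting elements must vanish, using the decomposition $\bar\lie g = Z(\bar\lie g)\oplus[\bar\lie g,\bar\lie g]$, the splitting of an ideal of a reductive Lie algebra along this decomposition, semisimplicity of the torus action on the central part, and the Engel-plus-semisimplicity dichotomy on the derived part. Both routes rest on Engel; the paper's trades the reductive structure theory for the Lie-algebra/unipotent-group equivalence. Your argument is a touch longer to make airtight (one has to justify the ideal decomposition and that the central part lies in the Lie algebra of a maximal torus of $(G/G_u)^\circ$, both standard), but it has the merit of isolating a reusable fact: an ideal of a reductive Lie algebra that acts by nilpotent endomorphisms on a faithful algebraic representation is zero.
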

\begin{proof}
By \cite[\S 10.14]{mil17}, the functor \(G \mapsto \Lie G\) from affine algebraic groups to Lie algebras commutes with pullbacks, so in particular it preserves kernels of morphisms.
Thus by the preceding lemma,
\begin{equation}\label{lie ker is ker lie}
    \Lie G_u = \ker \left( \lie g \longrightarrow \lie{gl}(V^{ss})\right).
\end{equation}
Thus we have that \(\Lie G_u \subseteq \nil_V \lie g\).

For the reverse inclusion, it suffices to show that \(\nil_V \lie g\) is the Lie algebra of a normal unipotent subgroup \(H\) of \(G\), as then \(H \leqslant G_u\) and \(\nil_V \lie g \subseteq \Lie G_u\).
Let \(K\) be the kernel of the action of \(G\) on \(V\), and let \(\lie h'\) be the image of \(\nil_V \lie g\) under the action morphism \(\lie g \to \lie{gl}(V)\).
Then \(\lie h'\) consists of nilpotent endomorphisms of \(V\), so by Engel's theorem, \(\lie h'\) is a nilpotent subalgebra of \(\Lie(G/K) \subseteq \lie{gl}(V)\).
Thus by \cite[Theorem~14.37]{mil17} there is a unipotent subgroup \(H' \leqslant G / K \leqslant \GL(V)\) such that \(\lie h' = \Lie H'\).
Let \(H\) be the preimage of \(H'\) in \(G\).
Then \(H\) is an extension of the unipotent group \(H'\) by the unipotent group \(K\), hence unipotent, and \(\Lie H\) is therefore nilpotent.
Moreover, since the functor \(\Lie\) commutes with pullbacks, we get that \(\nil_V \lie g = \Lie H\).
Finally, since \(\nil_V \lie g\) is an ideal of \(\lie g\), the subgroup \(H\) of \(G\) is normal.
\end{proof}

The quotient $G/H$ of an affine algebraic group $G$ by a normal algebraic subgroup $H$ always exists \cite[Theorem~5.14]{mil17}, but the rational points of the quotient $(G/H)(\Q)$ need not agree with $G(\Q)/H(\Q)$ in general. They do agree, however, if $H$ is unipotent.

\begin{lemma} \label{lemma:quotient}
Let $G$ be an affine algebraic group.
If $H$ is a normal unipotent algebraic subgroup of $G$, then the natural homomorphism
\[G(\Q)/H(\Q) \to (G/H)(\Q)\]
is an isomorphism.
\end{lemma}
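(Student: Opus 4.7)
The plan is: injectivity is immediate, while surjectivity reduces to the vanishing \(H^1(\Q, H) = 0\), which I would establish by induction on \(\dim H\) using the filtrations available in characteristic zero.

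Injectivity follows from left-exactness of the functor of \(\Q\)-points: the exact sequence \(1 \to H \to G \to G/H \to 1\) of group schemes yields an exact sequence \(1 \to H(\Q) \to G(\Q) \to (G/H)(\Q)\), so the induced map \(G(\Q)/H(\Q) \to (G/H)(\Q)\) is injective. For surjectivity I would fix \(y \in (G/H)(\Q)\) and consider the scheme-theoretic fiber \(P_y = \operatorname{Spec} \Q \times_{G/H} G\). Since \(G \to G/H\) is by construction a right \(H\)-torsor (\emph{cf.}~\cite[\S 5]{mil17}), \(P_y\) is a right \(H\)-torsor over \(\operatorname{Spec} \Q\), and a \(\Q\)-point of \(G\) lifting \(y\) is exactly a \(\Q\)-point of \(P_y\). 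Thus the task reduces to showing that every \(H\)-torsor over \(\operatorname{Spec} \Q\) is trivial, equivalently that \(H^1_{\mathrm{fppf}}(\operatorname{Spec} \Q, H) = 0\).

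I would obtain this vanishing by induction on \(\dim H\). In characteristic zero every unipotent algebraic group is smooth, connected, and nilpotent, so after refining the lower central series of \(H\) I obtain a filtration \(1 = H_0 \triangleleft H_1 \triangleleft \cdots \triangleleft H_r = H\) by characteristic (hence normal) subgroups with \(H_i/H_{i-1} \cong \mathbb{G}_a\). The long exact cohomology sequences coming from \(1 \to H_{i-1} \to H_i \to \mathbb{G}_a \to 1\) reduce the claim to \(H^1(\operatorname{Spec} \Q, \mathbb{G}_a) = H^1(\operatorname{Spec} \Q, \mathcal{O}) = 0\), which holds because \(\operatorname{Spec} \Q\) is affine. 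The main technical point is producing a filtration with additive quotients — this is where characteristic zero is essential; once the filtration is in hand, the rest is formal.
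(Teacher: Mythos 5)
Your proof is correct and follows the same route as the paper, which simply cites the vanishing of the Galois cohomology $H^1(\operatorname{Gal}(\overline{\Q}/\Q),H)$ for unipotent $H$ (Wilkerson, Theorem 9.5). You've merely unpacked that citation: the torsor description of the fibers of $G(\Q)\to (G/H)(\Q)$ and the dévissage of $H$ by a filtration with $\mathbb{G}_a$ quotients, reducing to additive Hilbert 90, is exactly the standard proof of the fact the paper delegates to the reference.
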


\begin{proof}
This is a consequence of the vanishing of Galois cohomology, $$H^1(\operatorname{Gal}(\overline{\Q}/\Q),H),$$
for unipotent groups $H$, see \cite[Theorem 9.5]{wil76}.
\end{proof}

Similarly, the normalizer $N_G(H)$ of an algebraic subgroup $H$ of an affine algebraic group $G$ always exists and its rational points, $N_G(H)(\Q)$, are contained in $N_{G(\Q)}(H(\Q))$, the normalizer of $H(\Q)$ in $G(\Q)$ in the ordinary sense \cite[Proposition 1.83]{mil17}. In general, equality need not hold, but it does if $H$ is unipotent.

\begin{lemma} \label{lemma:normalizer}
If $H$ is a unipotent algebraic subgroup of an affine algebraic group $G$, then
\(N_G(H)(\Q) = N_{G(\Q)}(H(\Q))\).
\end{lemma}

\begin{proof}
This follows from \cite[Proposition 1.84]{mil17} upon noting that every affine algebraic group over $\Q$ is smooth \cite[Theorem 3.23]{mil17} and that $H(\Q)$ is dense in $H$ if $H$ is unipotent (by, e.g., \cite[Theorem 17.93]{mil17}).
\end{proof}

\subsubsection{Arithmetic subgroups}
Given a linear algebraic group \(G \leqslant \GL_n\), we write
\[
    G(\Z) = G(\Q) \cap \GL_n(\Z)
\]
for the integer matrices inside the rational points of \(G\). By definition, an arithmetic subgroup \(\Gamma\) of \(G\) is a subgroup of \(G(\Q)\) which is commensurable with \(G(\Z)\), \emph{i.e.} the intersection \(\Gamma \cap G(\Z)\) has finite index in both \(G(\Z)\) and \(\Gamma\).

Arithmeticity can be thought of as a strong finiteness property. In particular, arithmetic groups are finitely presented \cite{serre79}.


\begin{slemma}[{\cite[\S1.1]{serre79}}]\label{lemma:arithmetic preimage}
Let $G$ be an affine algebraic group over $\Q$ and let $H\leqslant G$ be an algebraic subgroup. If $\Gamma$ is arithmetic in $G(\Q)$, then $\Gamma \cap H(\Q)$ is arithmetic in $H(\Q)$.
\end{slemma}

\begin{slemma}[{\cite[Theorem 6]{borel66}}]\label{lemma:image arithmetic}
Let $\varphi \colon G\to G'$ be a surjective morphism of affine algebraic groups defined over $\Q$. If $\Gamma$ is an arithmetic subgroup of $G(\Q)$, then $\varphi(\Gamma)$ is an arithmetic subgroup of $G'(\Q)$.
\end{slemma}

We record the following, presumably well-known, characterization of arithmetic subgroups of unipotent groups for later use. We include a proof for completeness.


\begin{lemma} \label{lemma: integer points of unipotent group}
Let $U$ be a unipotent algebraic group defined over $\Q$ and let $\Gamma$ be a subgroup of $U(\Q)$.
\begin{enumerate}
\item $\Gamma$ is Zariski dense in $U$ if and only if the inclusion of $\Gamma$ into $U(\Q)$ is $\Q$-surjective.

\item $\Gamma$ is an arithmetic subgroup of $U(\Q)$ if and only if it is finitely generated and Zariski dense.
\end{enumerate}
\end{lemma}

\begin{proof}
We may assume that $U$ is a subgroup of $\bU_n$ for some $n$. Following \cite[p.104]{segal}, consider the Lie subalgebra $\mathscr{L}(\Gamma)$ of the Lie algebra $\lie u$ of $U$ given by the linear span of the image of $\Gamma$ under the bijection $\log \colon U(\Q) \to \lie u$. The associated unipotent algebraic subgroup $\exp \mathscr{L}(\Gamma) \leqslant U$ contains $\Gamma$, so it must be equal to $U$ if $\Gamma$ is Zariski dense. The inclusion $\Gamma \to U(\Q)$ is then $\Q$-surjective by \cite[Theorem 2(iv), p.104]{segal}. Conversely, if the inclusion of $\Gamma$ into $U(\Q)$ is $\Q$-surjective, then so is the inclusion of the Zariski closure $\overline{\Gamma}$. This is then a $\Q$-isomorphism between nilpotent uniquely divisible groups, so it must be an equality. This proves the first statement.

It follows from \cite[Exercise 13, p.123]{segal} that every finitely generated dense subgroup of $U(\Q)$ is arithmetic. Conversely, if $\Gamma$ arithmetic, then it is finitely generated, and we will now argue that the inclusion $\Gamma \to U(\Q)$ is $\Q$-surjective. Inclusions of finite-index subgroups are clearly \(\Q\)-isomorphisms, so it suffices to show that the inclusion of $U(\Z)$ into $U(\Q)$ is \(\Q\)-surjective, \emph{i.e.}, that for every \(A \in U(\Q)\), there is a positive integer \(k\) such that \(A^k \in U(\Z)\). Since \(A\) is a unipotent $n\times n$-matrix, the matrix \(N = A - I\) satisfies \(N^n = 0\).
Pick a positive integer \(d\) such that \(d N^i\) has integer entries for all $i\geq 1$, and let \(k = 1! 2! \cdots n! d\).
Then the matrix
\[
    A^k = \sum_{i = 0}^{n-1} \binom{k}{i} N^i
\]
has integer entries, because each coefficient \(\binom{k}{i}\) is divisible by \(d\).
\end{proof}

Since $U(\Q)$ is nilpotent and $\Q$-local when $U$ is unipotent, the group $U(\Q)$ will be a $\Q$-localization of the nilpotent group $\Gamma\leqslant U(\Q)$ whenever $\Gamma$ is dense in $U$.

\begin{rmk}
One can not relax unipotence to nilpotence in Lemma \ref{lemma: integer points of unipotent group}. The multiplicative group is abelian and in particular nilpotent, but the inclusion of $\Z^\times$ into $\Q^\times$ is not $\Q$-surjective.
\end{rmk}

\subsection{Nilpotent radicals of dg Lie algebras} \label{sec:nilradical}
In this section we discuss the notion of nilradical in the setting of differential graded Lie algebras.
Let \((\lie{g}, \delta)\) be a dg Lie algebra, possibly unbounded as a chain complex. We write \(\Gamma^k\lie{g}\) for the lower central series of \(\lie{g}\), so \(\Gamma^1\lie{g} = \lie{g}\) and \(\Gamma^{k+1}\lie{g} = \left[ \Gamma^k\lie{g}, \lie{g} \right]\).
Given an integer \(k\), we write \(\tr{\lie{g}}{k}\) for the truncation of \(\lie{g}\) given by
\[
    \tr{\lie{g}}{k}_n = \begin{cases} 0 & \text{if } n < k\\ \ker (\delta: \lie{g}_n \to \lie{g}_{n-1}) & \text{if } n = k\\ \lie{g}_n & \text{if } n > k \end{cases}
\]
We call $\lie g$ \emph{connected} if $\lie g = \lie g\langle 0 \rangle$ and we call $\lie g$ \emph{simply connected} if $\lie g = \lie g\langle 1 \rangle$. We say that $\lie g$ is of \emph{finite type} if $\lie g_i$ is finite-dimensional for every $i$.

\begin{defn}
A connected dg Lie algebra \(\lie{g}\) is \emph{nilpotent} if the following equivalent conditions are satisfied:
\begin{enumerate}[(i)]
    \item For every $n$, we have \((\Gamma^k \lie{g})_n = 0\) for \(k \gg n\);
    \item \(\lie{g}_0\) is a nilpotent Lie algebra which acts nilpotently on \(\lie{g}_n\) for every \(n>0\).
\end{enumerate}
\end{defn}


\begin{defn} \label{def:nilradical}
The \emph{nilradical} of \(\lie{g}\), denoted \(\nil \lie{g}\), is the maximal nilpotent ideal of \(\lie{g}\langle 0 \rangle\), provided such an ideal exists.
\end{defn}

\begin{rmk}
Ordinary Lie algebras may be identified with dg Lie algebras concentrated in degree \(0\). For these, the above definitions specialize to the usual definitions of nilpotence and nilradicals. In particular, the nilradical does not necessarily exist unless certain finiteness conditions are imposed. If the nilradical exists, however, then it is unique: if \(I, J\) are nilpotent ideals of \(\lie{g}\), then so is \(I+J\).
Thus if \(I\) is maximal, then \(J \subseteq I+J = I\).
\end{rmk}

\begin{lemma}\label{existence of nil}
Suppose that \(Z_0(\lie{g})\) is finite-dimensional.
Then \(\nil \lie{g}\) exists.
\end{lemma}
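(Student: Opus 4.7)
The plan is to construct the nilradical via Zorn's lemma applied to a suitable poset of nilpotent ideals of $\lie g\langle 0 \rangle$. The first observation is that for any nilpotent ideal $I$ of $\lie g\langle 0\rangle$, the degree-zero component $I_0$ is a nilpotent ideal of the ordinary Lie algebra $Z_0(\lie g)$: the ideal property descends, since $[Z_0(\lie g), I_0] \subseteq I_0$, and $\Gamma^k I_0 = (\Gamma^k I)_0 = 0$ for $k$ sufficiently large because $I$ is nilpotent. Since $Z_0(\lie g)$ is finite-dimensional, the classical nilradical $\mathfrak n_0 \subseteq Z_0(\lie g)$ exists as the unique maximal nilpotent ideal, and every nilpotent ideal $I$ of $\lie g\langle 0\rangle$ therefore satisfies $I_0 \subseteq \mathfrak n_0$.

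Next, using the remark that nilpotent ideals are closed under finite sums together with the finite-dimensionality of $\mathfrak n_0$, I would pick finitely many nilpotent ideals $I^{(1)},\ldots,I^{(s)}$ of $\lie g\langle 0\rangle$ whose degree-zero parts together span $\mathfrak n_0$, and form $J = I^{(1)} + \cdots + I^{(s)}$. This $J$ is a nilpotent ideal with $J_0 = \mathfrak n_0$, and for any further nilpotent ideal $I$ the sum $I + J$ is nilpotent with degree-zero part exactly $\mathfrak n_0$. I would then apply Zorn's lemma to the poset $\mathcal{P}$ of nilpotent ideals of $\lie g\langle 0\rangle$ that contain $J$; every $I \in \mathcal P$ has $I_0 = \mathfrak n_0$. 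Given a chain $J \subseteq K^{(1)} \subseteq K^{(2)} \subseteq \cdots$ in $\mathcal P$, the candidate upper bound is the union $K = \bigcup_\alpha K^{(\alpha)}$, which is plainly an ideal with $K_0 = \mathfrak n_0$. Once Zorn produces a maximal element $\mathfrak n \in \mathcal P$, the sum-closure from the remark together with maximality force $I + \mathfrak n = \mathfrak n$ for every nilpotent ideal $I$, hence $I \subseteq \mathfrak n$, identifying $\mathfrak n$ as the nilradical.

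The main obstacle is verifying nilpotence of the union $K$ of a chain. Concretely, in each positive degree $n$ one needs the finite-dimensional nilpotent Lie algebra $\mathfrak n_0$ to act \emph{uniformly} nilpotently on $K_n = \bigcup_\alpha (K^{(\alpha)})_n$; while each piece $(K^{(\alpha)})_n$ has a uniform bound $k(\alpha,n)$, in general $k(\alpha,n)$ can grow with $\alpha$, so that a union of uniformly nilpotent $\mathfrak n_0$-modules is only locally nilpotent. I expect this to be resolved by a degree-inductive argument exploiting both the finite-dimensionality of $\mathfrak n_0$ and the dg-Lie-algebra structure: any $k$-fold bracket of total degree $n$ has at most $n$ factors of positive degree (since positive degrees are at least $1$ and sum to $n$), so at least $k - n$ of the factors lie in $\mathfrak n_0$, and with enough such factors iterated applications of $\ad(\mathfrak n_0)$ must annihilate by the inductively known uniform bounds in lower degrees, producing a uniform bound $k(n)$ in degree $n$ independent of the chain index $\alpha$.
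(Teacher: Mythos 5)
Your approach is genuinely different from the paper's. The paper avoids Zorn entirely: it observes that the positive truncation $\tr{\lie{g}}{1}$ is already a nilpotent ideal and that $\lie{g}' = \lie{g}\langle 0\rangle/\tr{\lie{g}}{1}$ is a finite-dimensional dg Lie algebra concentrated in degrees $0$ and $1$ (the differential $\lie{g}'_1 \to \lie{g}'_0$ is injective and $\lie{g}'_0 = Z_0(\lie{g})$ is finite-dimensional), so the poset of nilpotent ideals of $\lie{g}\langle 0\rangle$ containing $\tr{\lie{g}}{1}$ satisfies the ascending chain condition and a maximal element exists for free; the remark about sums of nilpotent ideals then upgrades it to a genuine maximum. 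Your route via the classical nilradical of $Z_0(\lie{g})$ and Zorn's lemma is a reasonable alternative, but it is harder and has a real gap where you suspect one.

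The gap is precisely the step you flag: you need unions of chains in $\mathcal{P}$ to be nilpotent, and the resolution you sketch does not work. The ``at most $n$ factors of positive degree'' counting does not set up an induction on degree, because after rewriting a $k$-fold bracket in degree $n$ as $\ad(x_1)\cdots\ad(x_{k-n})(z)$ with $x_i \in K_0$, the remaining element $z$ still lies in degree $n$, not in a lower degree; in the base case $n=1$ this makes the argument circular, since showing $\ad(K_0)$ acts uniformly nilpotently on $K_1$ is exactly what one is trying to prove. A correct repair is available but requires a different idea: once the chain has stabilized in degree $0$ with $K_0 = \mathfrak{m}$, the ideal $I(\mathfrak{m})$ of $\lie{g}\langle 0\rangle$ generated by $\mathfrak{m}$ is contained in some $K^{(\alpha_0)}$ and hence is nilpotent; since $\ad(\mathfrak{m})$ carries all of $\lie{g}_n$ (in particular $K_n$) into $I(\mathfrak{m})_n$, nilpotence of $I(\mathfrak{m})$ gives a bound on the $\mathfrak{m}$-action on $K_n$ that is independent of $\alpha$. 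There is also a smaller slip earlier on: you cannot in general choose nilpotent ideals whose degree-zero parts span the full classical nilradical $\mathfrak{n}_0 \subseteq Z_0(\lie{g})$; they span only some subspace $\mathfrak{m} = \sum_I I_0 \subseteq \mathfrak{n}_0$, which can be a proper subspace, and the rest of the argument should be run with $\mathfrak{m}$ in place of $\mathfrak{n}_0$.
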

\begin{proof}
We may assume that $\lie g$ is connected.
Note that the positively graded truncation \(\tr{\lie{g}}{1}\) is always a nilpotent ideal of \(\lie{g}\), so the poset of all nilpotent ideals contains a maximal element if and only if the poset of nilpotent ideals above \(\tr{\lie{g}}{1}\) does.
But the latter poset injects into the poset of nilpotent (or indeed all) ideals of the quotient dg Lie algebra \(\lie{g}' = \lie{g} / \tr{\lie{g}}{1}\).
Clearly \(\lie{g}'_i = 0\) for \(i > 1\), the differential \(\lie{g}'_1 \to \lie{g}'_0\) is injective, and \(\lie{g}'_0 = \lie{g}_0\) is finite-dimensional, so every chain of ideals of \(\lie{g}'\) is finite.
\end{proof}

\subsection{Geometric realizations of nilpotent dg Lie algebras}
To each nilpotent Lie algebra $\lie g$ over $\Q$, one can associate a group $\exp(\lie g)$ with underlying set $\lie g$ and with multiplication given by the Baker--Campbell--Hausdorff formula. The association $\lie g \mapsto \exp(\lie g)$ is part of an equivalence of categories between nilpotent Lie algebras over $\Q$ and nilpotent uniquely divisible groups (\emph{cf.}~\cite[Appendix A]{quillen69}). When $\lie g$ is finite-dimensional, $\exp(\lie g)$ can be given the structure of an affine algebraic group, and the association $\lie g \mapsto \exp(\lie g)$ is part of an equivalence between the category of finite-dimensional nilpotent Lie algebras over $\Q$ and the category of unipotent algebraic groups over $\Q$ (\emph{cf.}~\cite[Theorem 14.37]{mil17}).

Now let $\lie g$ be a nilpotent dg Lie algebra and consider the simplicial group
\[
    \exp_\bullet \lie{g} = \exp Z_0(\lie{g} \otimes \Omega_\bullet),
\]
where $\Omega_\bullet$ denotes the simplicial commutative differential graded algebra over $\Q$ of polynomial differential forms on the standard simplices. 

\begin{sprop}[{\cite[Theorem 6.2]{be23}}] \label{prop:homotopy of exp}
For every nilpotent dg Lie algebra $\lie g$, there is a natural isomorphism of groups,
\begin{equation} \label{eq:groups}
\exp H_0(\lie g) \to \pi_0 \exp_\bullet(\lie g),
\end{equation}
and a natural isomorphism of abelian groups
\[H_k(\lie g) \to \pi_k\exp_\bullet(\lie g),\]
for every $k>0$, compatible with the actions of the groups in \eqref{eq:groups}.
\end{sprop}
By \cite[Theorem 5.2(2)]{be23}, a natural model for the classifying space $B\exp_\bullet(\lie g)$ is given by the nerve, or Maurer--Cartan space.
This is the simplicial set defined by
\[
    \MC_\bullet(\lie{g}) = \MC(\lie{g} \otimes \Omega_\bullet),
\]
where $\MC$ denotes the set of Maurer--Cartan elements in a dg Lie algebra, i.e., the solutions to the equation
\[\delta(\tau) + \frac{1}{2}\big[\tau,\tau\big] = 0.\]
Note that Proposition \ref{prop:homotopy of exp} implies that $|\MC_\bullet(\lie g)|$ is a $\Q$-local nilpotent space if $\lie g$ is a nilpotent dg Lie algebra.

\begin{defn} \label{def:lie model}
A \emph{Lie model} for a space $B$ is by definition a dg Lie algebra $\lie g$ such that $B$ is rationally homology equivalent to $|\MC_\bullet(\lie g)|$.
\end{defn}

If $X$ is a simply connected space, then Quillen's dg Lie algebra $\lambda(X)$ \cite{quillen69} is a Lie model for $X$ in the sense of the above definition (this follows from \cite[Theorem 8.1]{be23}). Recall (see e.g.~\cite{bl77}) that every simply connected space $X$ admits a unique, up to non-canonical isomorphism, Lie model of the form $L_X= (\LL(V),d)$, where $\LL(V)$ denotes the free graded Lie algebra on the graded vector space $V= s^{-1}\widetilde{H}_*(X;\Q)$ and $d$ is decomposable in the sense that the induced differential on $L_X/[L_X,L_X]$ is trival. Note that $L_X$ is finitely generated if and only if $\widetilde{H}_*(X;\Q)$ is finite-dimensional. We will refer to $L_X$ as the minimal Quillen model of $X$.

Recall from e.g.~\cite[\S22]{fht01} that the Chevalley--Eilenberg complex of a dg Lie algebra $\lie g$ is the dg coalgebra
\[
    C_*(\lie g) = (\Lambda \s \lie g, d = d_0 + d_1)
\]
where \(d_0\) and \(d_1\) are the coderivations characterized by
\begin{align*}
    d_0 (\s x) & = - \s (dx)\\
    d_1 (\s x_1 \wedge \s x_2) & = (-1)^{\lvert x_1 \rvert} \s [x_1, x_2].
\end{align*}
The Chevalley--Eilenberg cochain algebra is the dual dg algebra $C^*(\lie g) = C_*(\lie g)^\vee$. We denote its cohomology algebra by $H_{CE}^*(\lie g)$.

Next, recall that the polynomial differential forms on a simplicial set $K$ is the dg algebra $\Omega^*(K) = \Hom_{sSet}(K,\Omega_\bullet)$. It is a commutative dg algebra model for the cochains on $K$, see \S\ref{sec:local systems} below for a further discussion. Also, recall that the spatial realization of a commutative dg algebra $\Lambda$ is the simplicial set
\(
    \spatial{\Lambda} = \Hom_{\mathrm{cdga}}(\Lambda, \Omega_\bullet).
\)

\begin{prop} \label{prop:ce cochains}
Let $\lie g$ be a nilpotent dg Lie algebra of finite type. There is a natural quasi-isomorphism of commutative dg algebras
\[ C^*(\lie g) \to \Omega^*( \MC_\bullet(\lie g)).\]
In particular, if $\lie g$ is a Lie model for the space $B$, then $H^*(B;\Q) \cong H_{CE}^*(\lie g)$ and $|\MC_\bullet(\lie g)|$ is a $\Q$-localization of $B$.
\end{prop}

\begin{proof}
There is a natural isomorphism of simplicial sets
\[\MC_\bullet(\lie g) \xrightarrow{\cong} \spatial{C^*(\lie g)},\]
see \emph{e.g.}~\cite[Corollary 3.6]{be17}. 
The Chevalley--Eilenberg cochains $C^*(\lie g)$ is a Sullivan algebra and hence cofibrant (see e.g.~\cite[Theorem 2.3]{be15}), so the adjoint map $C^*(\lie g) \to \Omega^*(\MC_\bullet(\lie g))$ is a quasi-isomorphism as a consequence of \cite[Theorem 9.4]{bg76}.
\end{proof}

\subsection{The dg Lie algebra of curved derivations} \label{sec:curved}
The main result of Quillen's theory \cite{quillen69} is that the functor $X\mapsto \lambda(X)$ induces an equivalence between the rational homotopy category of simply connected pointed spaces and the homotopy category of simply connected dg Lie algebras. If one wants to model unpointed spaces, one has to enlarge the set of morphisms of dg Lie algebras. A possible solution is to work with so called curved morphisms of dg Lie algebras (\emph{cf.}~\cite{maunder18}).

Let \(L\) be a finitely generated, positively graded dg Lie algebra and let \(L_+ = (L * \Lbb(\tau), d^\tau)\) be the dg Lie algebra obtained by freely adjoining a Maurer--Cartan element \(\tau\) to \(L\) and twisting the differential by \(\tau\), so \(d^\tau(x) = d(x) + [\tau, x]\) for \(x \in L\). It is straightforward to check that morphisms from $L_+$ to a dg Lie algebra $L'$ correspond to curved morphisms from $L$ to $L'$. This is analogous to the fact that the space of free maps from a pointed space $X$ to another pointed space $Y$ can be recovered as the space of pointed maps from $X_+$ to $Y$, where $X_+$ is the space obtained from $X$ by adding a disjoint basepoint.

The projection $p\colon L_+ \to L$ that restricts to the identity on $L$ and sends $\tau$ to zero is a morphism of dg Lie algebras. Let $\Der^c(L)$ denote the chain complex of $p$-derivations from $L_+$ to $L$. Its elements are maps $\theta \colon L_+ \to L$ that satisfy
$$\theta[x,y] = [\theta(x),p(y)] + (-1)^{|\theta| |x|}[p(x),\theta(y)],$$
for all $x,y\in L_+$. 

As is well known, the mapping cone of the chain map $\ad \colon L \to \Der L$, denoted $\Der L \dquot \ad L$ or $\Der L \ltimes_{\ad} sL$, admits a dg Lie algebra structure, see \emph{e.g.}~\cite{tan83} or \cite[p.252]{be17}. We now make the observation that this mapping cone may be identified with the chain complex of curved derivations.

\begin{prop}\label{prop: curved derivations}
The map $\varphi\colon \Der L \dquot \ad L \to \Der^c(L)$, defined by
$$\varphi(\theta,s\xi) = \theta \circ p + (-1)^{|\xi|} \xi \frac{\partial}{\partial \tau},$$
is an isomorphism of chain complexes, with inverse
$$\varphi^{-1}(\nu) = \big( \nu|_L, (-1)^{|\nu|+1}s\nu(\tau) \big).$$
\end{prop}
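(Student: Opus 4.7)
The approach is to first get the underlying graded vector space isomorphism for free from the structure of $L_+$, and then reduce the chain-map check to verification on two types of generators. Since $L_+ = L * \Lbb(\tau)$ as a graded Lie algebra, the universal property of free products shows that a $p$-derivation $\nu \colon L_+ \to L$ is uniquely determined by the pair $(\nu|_L, \nu(\tau))$, where $\nu|_L \in \Der L$ is an ordinary derivation (because $p|_L = \id$) and $\nu(\tau) \in L$ is completely arbitrary. This gives a natural bijection $\Derc(L) \cong \Der L \oplus sL$ of graded vector spaces, and inspection of the two displayed formulas shows that $\varphi$ and the claimed $\varphi^{-1}$ implement this bijection: $\varphi(\theta, s\xi)$ restricts to $\theta \circ p|_L = \theta$ on $L$ and sends $\tau$ to $(-1)^{|\xi|}\xi$, and the stated inverse simply reads off these two pieces of data (with a sign).

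The substantive content is the compatibility with the differentials. Recall that the differential on the mapping cone $\Der L \dquot \ad L$ is $D(\theta, s\xi) = ([d_L,\theta] + \ad\xi,\, -sd_L\xi)$, while the differential on $\Derc(L)$ is the graded commutator $[d,\nu] = d_L \circ \nu - (-1)^{|\nu|}\nu \circ d^\tau$, with $d^\tau(x) = d_Lx + [\tau,x]$ for $x \in L$ and $d^\tau(\tau) = -\tfrac{1}{2}[\tau,\tau]$. Because both $[d,\varphi(\theta,s\xi)]$ and $\varphi(D(\theta,s\xi))$ are $p$-derivations $L_+ \to L$, by the preceding paragraph it suffices to compare their values on a generic $x \in L$ and on $\tau$.

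On $x \in L$, unwinding $[d,\nu](x)$ produces $[d_L,\theta](x)$ together with a contribution $-(-1)^{|\nu|}\nu([\tau,x])$; the $p$-derivation property collapses $\nu([\tau,x])$ to $[\nu(\tau),x] = (-1)^{|\xi|}[\xi,x]$ because $p(\tau) = 0$, and the sign identity $|\xi| = |\theta| - 1$ turns the total sign into $+\ad\xi(x)$, matching $\varphi(D(\theta, s\xi))(x)$. On $\tau$, the curvature term $d^\tau(\tau) = -\tfrac{1}{2}[\tau,\tau]$ is annihilated by $\nu$ since $p(\tau) = 0$ forces $\nu([\tau,\tau]) = 0$, so $[d,\nu](\tau) = (-1)^{|\xi|}d_L\xi$, which is precisely $\varphi(\ldots,\, -sd_L\xi)(\tau)$ after accounting for the sign $(-1)^{|d_L\xi|}$ in the definition of $\varphi$.

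The main obstacle is sign bookkeeping: three separate conventions have to line up — the sign in the mapping-cone differential, the sign in the graded commutator defining the differential on $\Derc(L)$, and the sign $(-1)^{|\xi|}$ in the definition of $\varphi$. Once these are fixed consistently (as above), the verification is routine, and mutual inverseness of $\varphi$ and $\varphi^{-1}$ reduces to substituting the formulas and checking that the two sign factors cancel.
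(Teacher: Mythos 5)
Your proof is correct and fills in exactly the "straightforward calculation" the paper leaves implicit: identify a $p$-derivation $L_+\to L$ with the pair $(\nu|_L,\nu(\tau))$, then check the two differentials agree on $x\in L$ and on $\tau$, using that $p(\tau)=0$ kills the $[\tau,x]$ and $[\tau,\tau]$ contributions up to the $\ad\xi$ term, with the sign relation $|\xi|=|\theta|-1$ making everything match. This is the same (and really the only) approach.
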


\begin{proof}
Straightforward calculation.
\end{proof}

In particular, this implies that $\Der^c(L)$ admits a dg Lie algebra structure and that it acts on $L$ by outer derivations in the sense of \cite[\S3.5]{be17}. We do not recall the full definition here, but we point out that the outer action of \(\Derc(L)\) on \(L\) gives rise to an (ordinary) action by coderivations on the Chevalley--Eilenberg chains $C_*(L) = (\Lambda sL,d)$. This action may be constructed by noting that \(\s L_+\) contains \(\Q[0] \oplus \s L = \Lambda^{\leqslant 1} \s L\) as a graded subspace, where the copy of \(\Q\) in degree 0 is generated by \(\s \tau\).
Hence every curved derivation \(\phi \in \Derc(L)\) determines a map \(\Lambda^{\leqslant 1} \s L \to \s L\) by suspension and restriction, and thus it determines a unique coderivation of the cofree coalgebra \(\Lambda \s L\).
Explicitly,
\begin{equation}\label{eqn: coderivation action}
\begin{aligned}
    \phi (\s x_1 \wedge \cdots \wedge \s x_n) & = (-1)^{\lvert \phi \rvert} \s \phi(\tau) \wedge \s x_1 \wedge \cdots \wedge \s x_n\\
    & + \sum_i \pm \s x_1 \wedge \cdots \wedge \s \phi(x_i) \wedge \cdots \wedge \s x_n
\end{aligned}
\end{equation}
Dually, \(\Derc(L)\) then acts by derivations on the cdga \(C^*(L) = C_*(L)^\vee\) of Chevalley--Eilenberg cochains on \(L\).

\begin{rmk} \label{remark:comparison to laz14}
The constructions $L_+$ and $\Derc(L)$ essentially agree with the constructions $L\langle \tau \rangle$ and $\Der_\tau(L\langle \tau \rangle)$ considered in \cite[pp.44--45]{laz14}.
\end{rmk}

\subsection{Algebraic groups of automorphisms} \label{sec:algebraic groups of automorphisms}
The following goes back to Sullivan \cite[\S6]{sul77}. We outline the proof for the reader's convenience.

\begin{thm}\label{thm:sul-wil}
Let $X$ be a simply connected finite CW-complex with minimal Quillen model $L$.

\begin{enumerate}[(i)]
\item\label{thm:sul-wil:aut L}
The automorphisms of $L$ form an affine algebraic group $\AUT(L)$ and the automorphisms homotopic to the identity form a unipotent subgroup $\AUT_h(L)$.

\item\label{thm:sul-wil:Q-points} The group $\piaut{X_\Q}$ may be identified with the $\Q$-points of the quotient affine algebraic group $\AUT^h(L) = \AUT(L)/\AUT_h(L)$.

\item \label{thm:sul-wil:lie algebra} The Lie algebra of the algebraic group $\AUT^h(L)$ is isomorphic to $H_0(\Derc L)$.

\item\label{thm:sul-wil:unipotent ker} The representation of $\piaut{X_\Q}$ in $H_*(X;\Q)$ extends to an algebraic representation of $\AUT^h(L)$ with unipotent kernel.

\end{enumerate}
\end{thm}

\begin{proof}[Proof outline]
Since \(X\) is a finite CW-complex, the minimal Quillen model $L$ is finitely generated. We can then define a linear algebraic group $\AUT L$ with functor of points
\[
    R \longmapsto \Aut_{\mathrm{dgl}(R)}(L \otimes R),
\]
sending a \(\Q\)-algebra \(R\) to the group of \(R\)-linear automorphisms of the dg Lie algebra \(L \otimes R\). To see that this indeed is a linear algebraic group, note that a faithful finite-dimensional representation is given by $L_{\leq N}$, where $N$ is the maximal degree of a generator.

The subgroup $\AUT_h(L)$ of automorphisms homotopic to the identity may be identified with the unipotent algebraic group associated to the nilpotent Lie algebra $B_0 \Der L$ of derivations of $L$ of the form $[d,\theta]$, for some derivation $\theta\colon L\to L$ of degree $1$, see \cite[\S 6]{sul77} or \cite[Theorem~3.4]{blla05}, or the proof of \cite[Lemma~8.1]{ffm21}.

Since $X_\Q$ is simply connected, there is an isomorphism of groups $\piautp{X_\Q} \cong \piaut{X_\Q}$.
Quillen's equivalence \cite{qui67} between the category of simply connected pointed spaces, localized at the rational homotopy equivalences, and the category of positively graded dg Lie algebras, localized at the quasi-isomorphisms, coupled with the fact that a quasi-isomorphism between minimal dg Lie algebras is an isomorphism, leads to an isomorphism of groups
\[\piautp{X_\Q} \cong \Aut(L)/\Aut_h(L),\]
where $\Aut(L)$ and $\Aut_h(L)$ denote the $\Q$-points of $\AUT L$ and $\AUT_h L$, respectively. It follows from Lemma \ref{lemma:quotient} that the right-hand side may be identified with the $\Q$-points of the quotient algebraic group $\AUT^h L = \AUT L /\AUT_h L$.

The Lie algebra of $\AUT L$ may be identified with $Z_0(\Der L)$, so the Lie algebra of $\AUT^h(L)$ may be identified with $Z_0(\Der L)/B_0(\Der L) = H_0(\Der L)$. Since $L$ is positively graded, $H_0(\Der L) = H_0(\Der^c L)$.

For the last statement, one identifies
$$H_*(X;\Q)  = \Q\oplus sL/[L,L],$$
and notes that the right-hand side is an algebraic representation of $\AUT L$ on which $\AUT_h L$ acts trivially. The associated graded $\gr L$ of $L$ with respect to the lower central series is isomorphic to the free Lie algebra on $L/[L,L]$ as a representation of $\AUT L$. It follows that the kernel of the representation $L/[L,L]$ acts trivially on $\gr L$, which implies that it acts unipotently on $L$ in each degree.
\end{proof}

There is a parallel statement for finite Postnikov stages. The proof is essentially the same so we omit it.

\begin{sthm} \label{thm:sul-wil II}
Let $X$ be a simply connected Postnikov stage of finite type with minimal Sullivan model $\Lambda$.
\begin{enumerate}[(i)]

\item\label{thm:sul-wil:aut L II} The automorphisms of $\Lambda$ form an affine algebraic group $\AUT(\Lambda)$ and the automorphisms homotopic to the identity form a unipotent subgroup $\AUT_h(\Lambda)$.

\item\label{thm:sul-wil:Q-points II}The group $\piaut{X_\Q}$ may be identified with the $\Q$-points of the algebraic group $\AUT^h \Lambda = \AUT \Lambda / \AUT_h\Lambda$.

\item\label{thm:sul-wil:lie algebra II} The Lie algebra of the algebraic group $\AUT^h \Lambda$ is isomorphic to $H_0(\Der \Lambda)$.

\item\label{thm:sul-wil:unipotent ker II} The representation of $\piaut{X_\Q}$ in $H_*(X;\Q)$ extends to an algebraic representation of $\AUT^h(\Lambda)$ with unipotent kernel.
\end{enumerate}
\end{sthm}


For a simply connected finite complex $X$ of dimension $n$, there is an isomorphism $\piaut{X_\Q} \cong \piaut{P_n X_\Q}$, where $P_n X$ denotes the $n$th Postnikov stage of $X$, cf.~\cite[Proposition 3.1(2)]{blla05}. We therefore have two, a priori different, ways to realize $\piaut{X_\Q}$ as the $\Q$-points of an algebraic group, one using Quillen models and one using Sullivan models. Saleh \cite{sa23} has recently shown that these algebraic group structures agree. In the rest of the paper we will, somewhat imprecisely, refer to $\piaut{X_\Q}$ as an algebraic group, with the understanding that we refer to either of the isomorphic algebraic groups above.

The following goes back to Sullivan \cite[Theorem 10.3]{sul77} and Wilkerson \cite[Theorem B]{wil76}. An elaboration of Sullivan's argument can be found in \emph{e.g.}~\cite{triantafillou}.

\begin{sthm} \label{thm:arithmeticity}
The homomorphism $\rat\colon \piaut{X} \to \piaut{X_\Q}$ induced by rationalization has finite kernel and image an arithmetic subgroup.
\end{sthm}

\section{Proofs of the main results}
Throughout this section, we fix a simply connected finite CW-complex $X$ and we let $\aut(X)$ denote the topological monoid of self-homotopy equivalences of $X$. The group $\piaut{X}$ will interchangeably be thought of as the group $\pi_0\aut(X)$ of components of the monoid $\aut(X)$ or as the fundamental group $\pi_1 \Baut{}{X}$ of the classfying space.

\begin{defn}\label{def: autU}
For a subgroup $G\leqslant \piaut{X}$, let $\aut_G(X)$ denote the union of the components of $\aut(X)$ that belong to $G$, so that there is a pullback square
\[
\begin{tikzcd}
\aut_G(X) \ar[d] \ar[r] & G \ar[d] \\
\aut(X) \ar[r] & \piaut{X}.
\end{tikzcd}
\]
\end{defn}
The cover of $\Baut{}{X}$ associated to $G\leqslant \piaut{X}$ is weakly equivalent to the classifying space $\Baut{G}{X}$ of the monoid $\aut_G(X)$ (defined e.g.~using the geometric bar construction \cite[\S7]{may75})) and we will tacitly identify these two spaces.

Here is an outline of the proof of the main results:


We fix a subgroup $G\leqslant \piaut{X}$ that acts nilpotently on the rational homology of $X$. In \S\ref{sec:unipotent groups} we show that $G$ uniquely determines a unipotent algebraic subgroup $U\leqslant\piaut{X_\Q}$ such that the homomorphism $\rat\colon \piaut{X} \to \piaut{X_\Q}$ restricts to a $\Q$-isomorphism $G\to U$, and we give several equivalent characterizations of this $U$. In \S\ref{sec:virtually nilpotent covers} we show that the space $\Baut{G}{X}$ is virtually nilpotent. In \S\ref{sec:rationalization of covers}, we start incorporating the action of the deck transformation group $\deckgrp{G}{X}$ on $\Baut{G}{X}$; in particular, we show that $\Baut{G}{X}$ is $\deckgrp{G}{X}$-equivariantly rationally equivalent to $\Baut{U}{X_\Q}$.
In \S\ref{sec:lie models} we construct a Lie model for the space $\Baut{U}{X_\Q}$. In general, there is no action of the deck transformation group on this Lie model, but there is an action of a larger group defined in terms of the minimal Quillen model of $X$. A crucial step in the proof is to relate this algebraically defined action to the action of the deck transformation group. The key is a lemma about conjugation actions on bar constructions, which we prove in \S\ref{sec: bar lemma}.
In \S\ref{sec:algebraicity}, we use this to prove Theorem \ref{thm:A}. 
In \S\ref{sec:algebraic lie model}, we observe that if $U$ is the unipotent radical of $\piaut{X_\Q}$, then the deck transformation group can be made to act on the Lie model and this leads to the proof of Theorem \ref{thm:main}.

\subsection{Unipotent groups of self-homotopy equivalences} \label{sec:unipotent groups}
Recall that $X$ is assumed to be a simply connected finite CW-complex. We begin by discussing how the homomorphism induced by rationalization,
\[\rat\colon \piaut{X} \to \piaut{X_\Q},\]
can be used to characterize subgroups of $\piaut{X}$ that act nilpotently on $H_*(X;\Q)$.


Since $X$ is simply connected, $\piaut{X}$ is isomorphic to the group $\piautp{X}$ of pointed homotopy classes of pointed self-homotopy equivalences. This group acts on the homotopy and homology groups of $X$. Moreover, the Hurewicz homomorphism $\pi_n(X)\to H_n(X)$ is $\piaut{X}$-equivariant. The \emph{spherical homology} $SH_n(X)$ is by definition the image of the Hurewicz homomorphism. It is a $\piaut{X}$-submodule of $H_n(X)$ and a quotient $\piaut{X}$-module of $\pi_n(X)$.

\begin{prop} \label{prop:nilpotent action}
Let $G\leqslant \piaut{X}$ be a subgroup and let $R$ be a subring of $\Q$. The following are equivalent:
\begin{enumerate}[(i)]
\item $G$ acts nilpotently on $H_n(X;R)$ for all $n$.
\item $G$ acts nilpotently on $\pi_n(X)\tensor R$ for all $n$.
\item $G$ acts nilpotently on $SH_n(X;R)$ for all $n$. 
\end{enumerate}
For $R=\Q$ these conditions are equivalent to the following:
\begin{enumerate}[(i)]
\setcounter{enumi}{3}
\item $\rat(G)$ is contained in a unipotent algebraic subgroup of $\piaut{X_\Q}$.
\end{enumerate}
\end{prop}

\begin{proof}
It is well-known that the first two conditions are equivalent (e.g.~the argument in \cite[Theorem 2.1]{hilton} goes through) and they clearly imply the third. For the converse, assume inductively that $G$ acts nilpotently on the $R$-homotopy groups of the Postnikov section $P_{n-1}(X)$ and use that the Hurewicz homomorphism sits in an exact sequence
\[H_{n+1}(P_{n-1}(X)) \to \pi_n(X) \to H_n(X).\]
This gives rise to an exact sequence
\[H_{n+1}(P_{n-1}(X);R) \to \pi_n(X)\tensor R  \to SH_n(X;R) \to 0,\]
where the left and right terms are nilpotent $G$-modules. It follows from \cite[Proposition I.4.3]{hmr75} that $\pi_n(X)\tensor R$ is a nilpotent $G$-module.

Finally, we prove the equivalence between the first condition and the fourth when $R=\Q$. If \(G\) acts nilpotently on \(H_*(X;\Q)\), then the image of \(G\) in \(\GL(H_*(X;\Q))\) lies in a unipotent algebraic subgroup \(U''\). The preimage \(U'\) of \(U''\) in \(\piaut{X_\Q}\) contains $\rat(G)$ and it is a unipotent algebraic subgroup since it is an extension of \(U''\) by the kernel of the \(\piaut{X_\Q}\)-representation \(H_*(X;\Q)\), which is unipotent by Theorem \ref{thm:sul-wil}\eqref{thm:sul-wil:unipotent ker}. Conversely, since $H_*(X;\Q)$ is an algebraic representation of $\piaut{X_\Q}$, any unipotent algebraic subgroup of the latter acts nilpotently on it.
\end{proof}

\begin{prop} \label{prop:unipotent groups of self-equivalences}
If $G\leqslant \piaut{X}$ is a subgroup that acts nilpotently on $SH_*(X;\Q)$, then there is a unique unipotent algebraic subgroup $U\leqslant \piaut{X_\Q}$ that satisfies the following equivalent conditions:
\begin{enumerate}[(i)]
\item $U$ is minimal among the unipotent algebraic subgroups that contain $\rat(G)$.

\item $\rat(G)$ is a Zariski dense subgroup of $U$.

\item $\rat(G)$ is an arithmetic subgroup of $U$.

\item $G$ is a finite-index subgroup of $\rat^{-1}(U)$.

\item $\rat(G)\leqslant U$ and the induced homomorphism $\rat\colon G\to U$ is a $\Q$-isomorphism.
\end{enumerate}
Moreover, $G$ is finitely generated in this situation.

Conversely, if $U\leqslant \piaut{X_\Q}$ is a unipotent algebraic subgroup, then there is a unique commensurability class of subgroups $G\leqslant \piaut{X}$ such that the above conditions are satisfied.
\end{prop}


\begin{proof}


If $G\leqslant \piaut{X}$ acts nilpotently on $SH_*(X;\Q)$, then $\rat(G)$ is contained in a unipotent algebraic subgroup of $\piaut{X_\Q}$ by Proposition \ref{prop:nilpotent action}. If we let $U$ be the intersection of all unipotent algebraic subgroups of $\piaut{X_\Q}$ that contain $\rat(G)$, then $U$ is clearly the unique minimal unipotent algebraic subgroup that contains $\rat(G)$.

If $U$ is minimal among the unipotent algebraic subgroups that contain $\rat(G)$, then the Zariski closure $\overline{\rat(G)}$ is contained in $U$ since the latter is Zariski closed. But then $\overline{\rat(G)}$ must be unipotent and therefore equal to $U$ by minimality of $U$.


Now assume that $\rat(G)$ is Zariski dense in $U$. Clearly, $\rat(G)$ is contained in $U\cap \rat(\piaut{X})$ and the latter is an arithmetic subgroup of $U$ by Lemma \ref{lemma:arithmetic preimage}. Arithmetic subgroups of unipotent groups are nilpotent and finitely generated (cf.~Lemma \ref{lemma: integer points of unipotent group}), and subgroups of finitely generated nilpotent groups are always finitely generated, so it follows that $\rat(G)$ is finitely generated. Since $\rat(G)$ is Zariski dense in $U$, Lemma \ref{lemma: integer points of unipotent group} shows that $\rat(G)$ is an arithmetic subgroup of $U$. We note in passing that we can use the exact sequence
$$
1 \to \ker(\rat)\cap G \to G \to \rat(G) \to 1
$$
to deduce that $G$ is finitely generated as well. Indeed, the kernel is finite by Theorem \ref{thm:arithmeticity} and we have just seen that $\rat(G)$ is finitely generated.

Next, if $\rat(G)$ is an arithmetic subgroup of $U$, then it must have finite index in $\rat(\rat^{-1}(U)) = U \cap \rat(\piaut{X})$, because the latter is also an arithmetic subgroup of $U$. As $\rat$ has finite kernel, this implies that $G$ has finite index in $\rat^{-1}(U)$.

The homomorphism $\rat\colon G\to U$ is the composite of the inclusion $G \to \rat^{-1}(U)$ followed by the homomorphism $\rat\colon \rat^{-1}(U) \to U$. If $G$ has finite index in $\rat^{-1}(U)$, then the inclusion is in particular a $\Q$-isomorphism. The homomorphism $\rat\colon \rat^{-1}(U) \to U$ has finite kernel and image an arithmetic subgroup of $U$, so it is a $\Q$-isomorphism by Lemma \ref{lemma: integer points of unipotent group}. This shows that $\rat\colon G\to U$ is a $\Q$-isomorphism if $G$ has finite index in $\rat^{-1}(U)$.

Finally, suppose $\rat$ restricts to a $\Q$-isomorphism $G\to U$. If $U'\leqslant \piaut{X_\Q}$ is a unipotent algebraic subgroup such that $\rat(G)\leqslant U'$, then $G\to U$ factors as $G\to U\cap U'$ followed by the inclusion $U\cap U' \to U$, implying the latter is $\Q$-surjective. But this is then a $\Q$-surjective inclusion of uniquely divisible groups, so it must be an equality. This shows that $U$ is minimal among the unipotent algebraic subgroups that contain $\rat(G)$.
By that we have gone full circle, showing the five conditions are equivalent.

Conversely, given $U$ and two subgroups $G$ and $G'$ that satisfy the equivalent conditions, the fourth condition shows that $G$ and $G'$ are finite-index subgroups of $\rat^{-1}(U)$. This implies that $G\cap G'$ has finite index in both $G$ and $G'$. So the commensurability class determined by $U$ is precisely the set of finite-index subgroups of $\rat^{-1}(U)$.

 
\end{proof}




\subsection{Virtual nilpotence of covers} \label{sec:virtually nilpotent covers}
As before, $X$ is a simply connected finite CW-complex and $G$ is a subgroup of $\piaut{X}$. The purpose of this section is to show that the space $\Baut{G}{X}$ is virtually nilpotent if $G$ acts nilpotently on the rational spherical homology of $X$. This extends a result of Dror and Zabrodsky \cite[Theorem D]{drza79}, which says that \(\Baut{G}{X}\) is nilpotent if $G$ acts nilpotently on the integral homology of $X$.


\begin{lemma} \label{lemma:vn}
Let $G$ be a group acting on a finitely generated abelian group $A$. If $G$ acts nilpotently on $A\tensor \Q$, then there is a finite-index subgroup $K\leqslant G$ that acts nilpotently on $A$.
\end{lemma}

\begin{proof}
Let $K$ be the kernel of the action of $G$ on the torsion subgroup $A_{\tor}$. Since the group $A_{\tor}$ is finite, so is its automorphism group, so the exact sequence
\[
1 \to K \to G \to \Aut\big(A_{\tor}\big)
\]
shows that $K$ has finite index in $G$. If $G$ acts nilpotently on $A\tensor \Q$, then there exists a filtration of $G$-modules
\[
0 = V_0 \subseteq V_1 \subseteq \ldots \subseteq V_n = A\tensor \Q
\]
such that $G$ acts trivially on  $V_i/V_{i-1}$ for each $i$. Let \(W_i \subseteq A\) be the preimage of \(V_i\) under the homomorphism \(A \to A\tensor \Q\).
This yields a filtration of $G$-modules
\[
W_0 \subseteq W_1 \subseteq \ldots \subseteq W_n = A,
\]
and it follows that $W_i/W_{i-1}$ has trivial \(G\)-action for $1\leq i \leq n$. Note that \(W_0 = A_\tor\), so if $G$ acts nilpotently on $A_\tor$ it follows that it acts nilpotently on \(A\) as well. In general, the $G$-action on \(A_\tor\) need not be nilpotent, but the action of \(K\) on it is trivial by definition. Thus, extending the filtration by \(W_{-1} = 0\) yields a filtration witnessing that \(K\) acts nilpotently on $A$.
\end{proof}

\begin{prop}\label{prop:virtually nilpotent}
If $G\leqslant \piaut{X}$ is a subgroup that acts nilpotently on $SH_*(X;\Q)$, then the space \(\Baut{G}{X}\) is virtually nilpotent.
It is nilpotent if $G$ acts nilpotently on $SH_*(X;\Z)$.
\end{prop}


\begin{proof}
We will in fact prove the slightly stronger statement that $\Baut{G}{X}$ admits a finite cover which is nilpotent. By Lemma \ref{lemma:vn}, there is a finite-index subgroup $K\leqslant G$ that acts nilpotently on $SH_*(X;\Z)$. By Proposition \ref{prop:nilpotent action}, the group \(K\) acts nilpotently on \(H_*(X; \Z)\). It follows from \cite[Theorem D]{drza79} that the space $\Baut{K}{X}$ is nilpotent. The space $\Baut{K}{X}$ is weakly equivalent to the finite cover of $\Baut{G}{X}$ that corresponds to the finite-index subgroup $K\leqslant G$.
\end{proof}

\begin{eg}
The space $\Baut{G}{X}$ is not virtually nilpotent in general. For example, if we take $X=S^2 \vee S^2$ and $G = \piaut{X}$, then $G \cong \GL_2(\Z)$, which is not virtually nilpotent.
\end{eg}

\begin{eg}
The space $\Baut{G}{X}$ need not be nilpotent even if $G$ acts nilpotently on $SH_*(X;\Q)$. To see this, consider the Moore space \(X = M(\Z/3\Z, 2)\), i.e., the homotopy cofiber of a degree $3$ self-map of $S^2$. This space is rationally equivalent to point, so $G=\piaut{X}$ acts nilpotently on $SH_*(X;\Q) = 0$ for trivial reasons.
The group of self-equivalences is not difficult to compute (see \emph{e.g.}~\cite[Theorem 2]{sieradski72}); there is an isomorphism
\[
 \piaut{X} \cong \left(\Z / 3\Z\right)^\times \ltimes \Z/3\Z \cong \Sigma_3,
\]
showing $\pi_1(\Baut{}{X}) = \piaut{X}$ is not nilpotent. The criterion for nilpotency of $\Baut{G}{X}$ in Proposition \ref{prop:virtually nilpotent}
is not satisfied because the action on $SH_2(X;\Z) = \Z/3\Z$ is through the projection onto $(\Z/3\Z)^\times$ and this action is not nilpotent.
\end{eg}

\subsection{Equivariant rationalization of covers} \label{sec:rationalization of covers}
In this section, we will show that $\Baut{G}{X}$ is rationally equivalent to $\Baut{U}{X_\Q}$ when $G$ and $U$ are as in Proposition \ref{prop:unipotent groups of self-equivalences}. To keep track of the action of the deck transformation group, we need a model for $\Baut{G}{X}$ that is appropriately functorial in $G\leqslant \piaut{X}$. Recall that the subgroups of $\piaut{X}$ form the objects the \emph{orbit category}, where a morphism $G\to H$ is given by a morphism of left $\piaut{X}$-sets $\piaut{X}/G\to \piaut{X}/H$.
\begin{defn} \label{def:bautfunc}
We define a functor from the orbit category of $\piaut{X}$ to the category of spaces over $\Baut{}{X}$ by sending $G\leqslant \piaut{X}$ to the space
$$\Bautfunc{G}{X} = B\big(*,\aut(X),\piaut{X}/G\big)$$
defined using the geometric bar construction \cite[\S7]{may75} of the topological monoid $\aut(X)$ acting on $\piaut{X}/G$ via the canonical map $\aut(X)\to \piaut{X}$.
\end{defn}
The space $\Bautfunc{G}{X}$ can be thought of as a functorial model for the cover of $\Baut{}{X}$ associated to $G\leqslant \piaut{X}$. The map
$$B\aut_G(X) = B\big(*,\aut_G(X),*\big)\to B\big(*,\aut(X),\piaut{X}/G\big) = \Bautfunc{G}{X},$$
induced by the inclusion of monoids $\aut_G(X)\to \aut(X)$ and the map of $\aut_G(X)$-spaces $* \to \piaut{X}/G$ that selects the coset $G$, is a weak equivalence, so both source and target are models for the classifying space of the monoid $\aut_G(X)$. The advantage of $\Bautfunc{G}{X}$ is that it carries an action of the group $\deckgrp{G}{X}$ of automorphisms of the $\piaut{X}$-set $\piaut{X}/G$. We remind the reader that this group, the `deck transformation group', may be identified with $N_{\piaut{X}}(G)/G$, the normalizer of $G$ in $\piaut{X}$ modulo $G$.



In what follows, we fix a subgroup $G \leqslant \piaut{X}$ that acts nilpotently on $SH_*(X;\Q)$ (or equivalently on $H_*(X;\Q)$ or $\pi_*(X)\tensor \Q$) and we let $U\leqslant \piaut{X_\Q}$ be the minimal unipotent algebraic subgroup that contains $\rat(G)$ as in Proposition \ref{prop:unipotent groups of self-equivalences}. 

\begin{lemma} \label{lemma:deck groups}
The homomorphism $\rat\colon \piaut{X} \to \piaut{X_\Q}$ carries the normalizer of $G$ to the normalizer of $U$. In particular, there is an induced group homomorphism $\deckgrp{G}{X} \to \deckgrp{U}{X_\Q}$.
\end{lemma}

\begin{proof}
Let $g\in N_{\piaut{X}}(G)$. We need to show that $\rat(g) \in N_{\piaut{X_\Q}}(U)$, i.e., that ${}^gU = \rat(g) U \rat(g)^{-1}$ agrees with $U$. By assumption, $U$ is the minimal unipotent algebraic subgroup of $\piaut{X_\Q}$ that contains $\rat(G)$. It follows that ${}^gU$ is a minimal unipotent algebraic subgroup that contains $\rat(g)\rat(G) \rat(g)^{-1} = \rat(gGg^{-1}) = \rat(G)$, so ${}^gU = U$ by the uniqueness part of Proposition \ref{prop:unipotent groups of self-equivalences}.
\end{proof}

The previous lemma implies that $\deckgrp{G}{X}$ acts on $\Bautfunc{U}{X_\Q}$ via the induced homomorphism $\rat\colon\deckgrp{G}{X} \to \deckgrp{U}{X_\Q}$. We remind the reader that we call a map between connected spaces a \emph{rational equivalence} if it induces a $\Q$-isomorphism on all homotopy groups \emph{and} all homology groups (see Definition \ref{def:rational equivalence}).

\begin{prop} \label{lemma:upgraded main lemma}
There is a zig-zag of $\deckgrp{G}{X}$-equivariant rational equivalences between $\Bautfunc{G}{X}$ and $\Bautfunc{U}{X_\Q}$.
\end{prop}

\begin{proof}
Let $r\colon X\to X_\Q$ be a rationalization. We may assume that $r$ is a cofibration. This holds in many models for rationalizations (\emph{e.g.}~cellular rationalization \cite[Theorem 9.7]{fht01} or the $H_*(-;\Q)$-localization of Bousfield \cite{bousfield75}), but if necessary, one can achieve this by abstract nonsense, \emph{e.g.}, by factoring $r$ as a cofibration followed by a weak homotopy equivalence. Now consider the pullback square
\[
\begin{tikzcd}
\aut(r) \ar[r, "q"] \ar[d] & \aut(X_\Q) \ar[d, "r^*"] \\
\aut(X) \ar[r, "r_*"] & \map(X,X_\Q)_{re},
\end{tikzcd}
\]
where \(\map(X,X_\Q)_{re}\) denotes the space of rational equivalences from $X$ to $X_\Q$ and \(\aut(r)\) denotes the space of self-equivalences of $r$ viewed as an object in the category of maps. The map $r^*$ is a fibration since $r$ is a cofibration, and it is a weak homotopy equivalence by standard properties of localizations. Hence, the left vertical map is a weak homotopy equivalence as well. The map $r_*$ is in general not a bijection on $\pi_0$, but its restriction to each component is a rational equivalence to the component it hits by \cite[Theorem II.3.11]{hmr75}. It follows that the top horizontal map $q$ has the same property.

This yields a zig-zag of grouplike monoids
\begin{equation} \label{eq:q}
\begin{tikzcd}
\aut(X) & \ar[l, "\sim"'] \aut(r) \ar[r, "q"] & \aut(X_\Q),
\end{tikzcd}
\end{equation}
where the left map is a weak equivalence and the right map $q$ induces an isomorphism on $\pi_k(-)\tensor \Q$ for all $k>0$ and may be identified with $\rat \colon \piaut{X} \to \piaut{X_\Q}$ on $\pi_0$. This induces a zig-zag
\begin{equation} \label{eq:zz}
B\big(*,\aut(X),\piaut{X}/G\big) \xleftarrow{\sim} B\big(*,\aut(r),\piaut{X}/G\big)  \xrightarrow{\psi} B\big(*,\aut(X_\Q),\piaut{X_\Q}/U\big),
\end{equation}
where $\psi$ is induced by the monoid map $q\colon \aut(r) \to \aut(X_\Q)$ and the $\aut(r)$-equivariant map $\rat \colon \piaut{X}/G \to  \piaut{X_\Q}/U$.
The homomorphism $\pi_1(\psi)$ may be identified with $\rat\colon G \to U$, which is a $\Q$-isomorphism by Proposition \ref{prop:unipotent groups of self-equivalences}. For $k>1$, we may identify $\pi_k(\psi)$ with $\pi_{k+1}(q)\colon \pi_{k+1}(\aut(r))\to \pi_{k+1}(\aut(X_\Q))$, which is a $\Q$-isomorphism by the above.

We recognize the leftmost term of \eqref{eq:zz} as $\Bautfunc{G}{X}$ and the rightmost term as $\Bautfunc{U}{X_\Q}$. Moreover, the maps in the zig-zag are clearly $\deckgrp{G}{X}$-equivariant. We have just shown that $\psi$ induces a $\Q$-isomorphism on all homotopy groups. By Proposition \ref{prop:virtually nilpotent}, the source of $\psi$ is virtually nilpotent and the target is nilpotent, so it follows from Lemma \ref{lemma:virtual nilpotence lemma} that $\psi$ is a rational homology equivalence as well.

\end{proof}

\subsection{Equivariant Lie models for covers} \label{sec:lie models}
As before, $X$ is a simply connected finite CW-complex, $G \leqslant \piaut{X}$ is a subgroup that acts nilpotently on $SH_*(X;\Q)$, and $U\leqslant \piaut{X_\Q}$ is the unique minimal unipotent algebraic subgroup of $\piaut{X_\Q}$ that contains $\rat(G)$.

We identify $X_\Q$ with the nerve of the minimal Quillen model $L$ for $X$. Recall from Theorem \ref{thm:sul-wil}\eqref{thm:sul-wil:lie algebra} that the Lie algebra of $\piaut{X_\Q}$ is isomorphic to $H_0(\Der^c L)$, so the Lie algebra of $U$ may be identified with a Lie subalgebra $\lie u \leqslant H_0(\Der^c L)$. 
The following definition can be viewed as a dg Lie algebra companion to Definition~\ref{def: autU}.


\begin{defn}\label{def: Deru}
Let $\Der_{\lie u}^c L$ denote the dg Lie subalgebra of $\Derc L \langle 0\rangle$ defined by the pullback
\[
\begin{tikzcd}
\Der_{\lie u}^c L \ar[d] \ar[r] & \lie u \ar[d] \\
\Derc L\langle 0\rangle \ar[r] & H_0(\Derc L).
\end{tikzcd}
\]
\end{defn}

\begin{prop} \label{prop:nilpotence of the dgl}
The dg Lie algebra $\Der_{\lie u}^c L$ is nilpotent.
\end{prop}

\begin{proof}
The degree zero component, $(\Der_{\lie u}^c L)_0$, may be identified with the Lie algebra of the preimage $\widetilde{U} = p^{-1}(U)$ under the projection $p\colon \AUT L \to \AUT^h L$. By Theorem~\ref{thm:sul-wil}\eqref{thm:sul-wil:aut L}, the homomorphism $p$ has unipotent kernel. It follows that $\widetilde{U}$ is an extension of unipotent groups and is hence unipotent (see \emph{e.g.}~\cite[\S6.45]{mil17}). Since $(\Der^c L)_k$ is an algebraic representation of $\widetilde{U}$ for each $k>0$, it follows that $\widetilde{U}$ acts unipotently on it (see \cite[Proposition~14.3]{mil17}). Hence, $(\Der_{\lie u}^c L)_0 = \Lie{\widetilde{U}}$ is nilpotent and acts nilpotently on $(\Der^c L)_k$ for each $k>0$.
\end{proof}

Let $\AutNorm{U}{L}\leqslant \Aut(L)$ denote the preimage of the normalizer of $U\leqslant \piaut{X_\Q}$ under the homomorphism $\Aut(L) \to \piaut{X_\Q}$. By design, the group $\AutNorm{U}{L}$ acts on $\piaut{X_\Q}$ and on $\Derc L$ by conjugation, fixing the subgroup $U$ and the Lie subalgebra $\lie u$, respectively. If $U$ is normal in $\piaut{X_\Q}$ then $\AutNorm{U}{L} = \Aut(L)$.

\begin{prop} \label{prop:ubridge}
There is a weak equivalence of grouplike topological monoids,
\begin{equation} \label{eq:ubridge}
|\exp_\bullet (\Der_{\lie u}^c L)| \to \aut_U(X_\Q),
\end{equation} 
which is equivariant with respect to the conjugation action of \(\AutNorm{U}{L}\) on the domain and codomain. In particular, $\Baut{U}{X_\Q}$ is weakly equivalent to the nerve of the dg Lie algebra $\Der_{\lie u}^c (L)$.

\end{prop}

\begin{proof}
The action of the nilpotent dg Lie algebra $\lie g = \Der_{\lie u}^c L$ on $L$ by outer derivations induces an action of the simplicial group $\exp_\bullet(\lie g)$ on the simplicial set $\MC_\bullet(L)$ (\emph{cf.}~\cite[\S3.5]{be20}), giving rise to a morphism of grouplike monoids
\begin{equation} \label{eq:bridge}
\alpha\colon |\exp_\bullet(\lie g)| \to \aut\big(|\MC_\bullet(L)|\big).
\end{equation}
The map \eqref{eq:bridge} is known to induce an isomorphism on $\pi_k(-)$ for $k>0$. To see this, one can apply \cite[Proposition 3.7]{be20} to the Sullivan model $\Lambda = C^*(L)$. Let us add that this is essentially equivalent to the statement, going back to Tanr\'e \cite[VII.4.(4)]{tan83}, that $\Der^c L\langle 1 \rangle$ is a Lie model for the simply connected cover of $\Baut{}{\MC_\bullet(L)}$.

By construction, $\pi_0(\aut_U(X_\Q)) = U$ and $H_0(\Der_{\lie u}^c L) = \lie u$. By Proposition \ref{prop:homotopy of exp}, there is an isomorphism \[\pi_0(\exp_\bullet(\Der_{\lie u}^c L)) \cong \exp(H_0(\Der_{\lie u}^c L)) = \exp(\lie u) = U.\]
One checks that the map induced by \eqref{eq:bridge} on $\pi_0$ may be identified with the inclusion of $U$ into $\piaut{X_\Q}$. Hence, \eqref{eq:bridge} corestricts to a weak equivalence $|\exp_\bullet(\lie g)| \to \aut_U(X_\Q)$.
The statement about $\AutNorm{U}{L}$-equivariance is quickly verified by inspection. The last statement follows by applying the classifying space functor to the monoid map \eqref{eq:ubridge} and noting that $B|\exp_\bullet(\lie g)|$ is weakly equivalent to the nerve $| \MC_\bullet(\lie g)|$.
\end{proof}

\begin{rmk}
Note that Proposition \ref{prop:homotopy of exp} implies that $B|\exp_\bullet(\lie g)|$ is nilpotent and rational whenever $\lie g$ is a nilpotent dg Lie algebra, so the above gives an alternative proof that the space $\Baut{U}{X_\Q}$ is nilpotent and rational.
\end{rmk}

In particular, forgetting the group actions, we get a Lie model for $\Baut{G}{X}$.
\begin{cor} \label{cor:nonequivariant lie model}
The space $\Baut{G}{X}$ is rationally equivalent to the nerve of the dg Lie algebra $\Der_{\lie u}^c(L)$.
\end{cor}

\begin{proof}
Combine Proposition \ref{prop:ubridge} and Proposition \ref{lemma:upgraded main lemma}.
\end{proof}

\begin{eg}
Let $\torelli{X}$ denote the \emph{homotopy Torelli monoid}, meaning the submonoid of $\aut(X)$ of those self-equivalences that act trivially on $H_*(X;\Z)$.
The space $B\torelli{X}$ is nilpotent by Proposition \ref{prop:virtually nilpotent}. Corollary \ref{cor:nonequivariant lie model} yields a Lie model for $B\torelli{X}$ described by the following: it agrees with $\Der^c L$ in positive degrees and in degree zero it consists of all derivations $\theta$ of $L$ that commute with the differential and are decomposable in the sense that $\theta(L) \subseteq [L,L]$. To see this, note that action of a derivation $\theta\in \Der L$ on the reduced homology of $X$ may be identified with the induced action on the indecomposables $L/[L,L]$.
\end{eg}

\begin{rmk} \label{remark:recover ffm}
Corollary \ref{cor:nonequivariant lie model} recovers the Lie model for $\Baut{G}{X}$ of \cite[Theorem 0.1]{ffm21}.
Indeed, one can check that $\Der_{\lie u}^c L$ agrees with the model $\Der^ {\mathcal{G}}L \widetilde{\times} sL$ of \cite[Theorem 0.1]{ffm21}, where $\mathcal{G}\leqslant \Aut^h(L)$ corresponds to $U\leqslant \piaut{X_\Q}$ under the isomorphism $\Aut^h(L) \cong \piaut{X_\Q}$. 
Let us also remark that when $U$ is the unipotent radical of $\piaut{X_\Q}$, one can show that the dg Lie algebra \(\Der_u^c L\) of Definition \ref{def: Deru} agrees with the dg Lie algebra \(\mathcal{D}er L\)
considered in \cite[Definition 7.4]{ffm21} if the filtration \cite[(30)]{ffm21} is chosen to be a composition series as in \eqref{eq:composition series}.


\end{rmk}




\subsection{A lemma on conjugation actions on bar constructions}\label{sec: bar lemma}
In this section, we will prove a lemma that will be a key ingredient in the proof of the main results. Before stating it, let us give a few words of motivation. Suppose that
\[
    1 \to G' \to G \xrightarrow{f} G''  \to 1
\]
is a split short exact sequence of groups, giving rise to a homotopy fiber sequence
\[
    BG' \to BG \xrightarrow{Bf} BG''.
\]
It is well-known that the conjugation action of \(G''\) on \(BG'\) models the holonomy action of \(G''\) on the homotopy fiber of \(Bf\).
On the other hand, the left action of \(G''\) on \(G''\hOrbits G = B(G'',G,*)\) also models the holonomy action.
The evident map \(BG' \to G''\hOrbits G\) is, however, not equivariant; it becomes equivariant if \(G''\) acts by simultaneous left multiplication and conjugation on the target. The lemma below offers a resolution of this seeming incongruity by showing that these two different $G''$-actions on \(G''\hOrbits G\) give weakly equivalent \(G''\)-spaces.

Now, let $\monoid$ be a topological monoid, let $\lspace$ be a left $\monoid$-space and $\rspace$ a right $\monoid$-space. Let $\inv{\monoid}$ denote the group of invertible elements in $\monoid$. There is an action of $\inv{\monoid}$ on the triple $(\rspace,\monoid,\lspace)$ defined by letting $g\in \inv{\monoid}$ act by
\begin{eqnarray*}
&\monoid \xrightarrow{c_g} \monoid, & h \mapsto ghg^{-1}, \\
&\lspace\xrightarrow{\ell_g} c_g^*(\lspace), & x \mapsto gx, \\
&\rspace \xrightarrow{r_g} c_g^*(\rspace), & y \mapsto yg^{-1}.
\end{eqnarray*}
This induces an action of $\inv{\monoid}$ on the geometric bar construction $B\big(\rspace,\monoid,\lspace\big)$, which we will refer to as the conjugation action.

\begin{lemma} \label{lemma:key lemma 2}
There is a natural zig-zag of $\inv{\monoid}$-equivariant homotopy equivalences connecting $B\big(\rspace,\monoid,\lspace\big)$, with the conjugation action, and the same space with the trivial action.
\end{lemma}

\begin{rmk}
The lemma implies the fact that, for each individual $g\in \inv{\monoid}$, the map $B(r_{g},c_g,\ell_g)$ is homotopic to the identity of $B\big(\rspace,\monoid,\lspace\big)$, but note that the lemma is not a formal consequence of this fact. Indeed, it is easy to find spaces with a group action where each individual multiplication map is homotopic to the identity but where the action can not be trivialized via a zig-zag of equivariant homotopy equivalences, e.g., $S^1$ with the antipodal action of $\Z/2\Z$.
\end{rmk}

\begin{proof}
We exploit the fact that the geometric bar construction extends to topological categories (see e.g.~\cite[Chapter V.2]{may96}). Consider the topological category $\monoid\times [1]$, where $[1]$ is the discrete category associated to the poset $\{0<1\}$. Specifying a representation $A$ of $\monoid\times [1]$, by which we mean a continuous functor from $\monoid\times [1]$ to the category of spaces, amounts to specifying a morphism of $\monoid$-spaces
\[ A_0 \xrightarrow{f} A_1.\]
We extend $\lspace$ and $\rspace$ to representations $\underline{\lspace}$ and $\underline{\rspace}$ of $\monoid\times [1]$ by using the respective identity maps. The inclusion functors
\[\monoid  = \monoid\times \{0\} \xrightarrow{i_0} \monoid\times [1] \xleftarrow{i_1} \monoid \times \{1\} = \monoid \]
together with the identity maps $\lspace\to i_0^*(\underline{\lspace})$, $\rspace\to i_0^*(\underline{\rspace})$, $\lspace\to i_1^*(\underline{\lspace})$, $\rspace\to i_1^*(\underline{\rspace})$,
induce maps on bar constructions
\begin{equation} \label{eq:equivariant zig-zag}
B\big(\rspace,\monoid,\lspace\big) \xrightarrow{Bi_0} B\big(\underline{\rspace},\monoid\times [1],\underline{\lspace}\big)  \xleftarrow{Bi_1} B\big(\rspace,\monoid,\lspace\big).
\end{equation}
The middle term is isomorphic to the cylinder $B\big(\rspace,\monoid,\lspace\big) \times I$ and the maps $Bi_0$ and $Bi_1$ may be identified with the bottom and top inclusions. In particular, both $Bi_0$ and $Bi_1$ are homotopy equivalences.

The idea is now to define an action of $\inv{\monoid}$ on the triple $(\underline{\rspace}, \monoid\times [1],\underline{\lspace})$ so that the maps \eqref{eq:equivariant zig-zag} become equivariant if the left copy of $B\big(\rspace,\monoid,\lspace\big)$ is given the trivial action and the right copy the conjugation action. Once we have found such an action, the proof will be complete.

For $g\in \inv{\monoid}$, there is a unique continuous functor
$$H_g\colon \monoid\times [1] \to \monoid\times [1]$$
such that $H_g^*(A)$ is the $\monoid\times [1]$-representation
\begin{equation*}
A_0 \xrightarrow{g f} c_g^*(A_1), \quad a \mapsto gf(a),
\end{equation*}
whenever $A$ is a representation of $\monoid\times [1]$ as above. The existence and uniqueness of $H_g$ follows from the Yoneda lemma for topologically enriched categories. One checks that $H_1 = 1$ and $H_g H_h = H_{gh}$ for all $g,h\in\inv{\monoid}$ so this defines an action of $\inv{\monoid}$ on $\monoid\times [1]$.
Next, the commutative square of $\monoid$-spaces
$$
\begin{tikzcd}
\lspace \arrow[r, "1"] \arrow[d, "1"]
& \lspace \arrow[d, "g"] \\
\lspace \arrow[r, "g"]
& c_g^*(\lspace)
\end{tikzcd}
$$
defines a morphism of $\monoid\times [1]$-representations $L_g\colon \underline{\lspace} \to H_g^*(\underline{\lspace})$. A morphism of contravariant representations $R_g\colon \underline{\rspace} \to H_g^*(\underline{\rspace})$
is defined similarly. Verifying that
$$(R_g,H_g,L_g)\colon (\underline{\rspace},\monoid\times [1],\underline{\lspace})\to (\underline{\rspace},\monoid\times [1],\underline{\lspace})$$
defines an action of $\inv{\monoid}$ with the desired properties is routine and left to the reader.
\end{proof}

\subsection{Algebraicity of cohomology and homotopy groups}
\label{sec:algebraicity}
In this section, we will give Theorem \ref{thm:A} a more precise formulation (Theorem \ref{thm:general algebraicity} below) and prove it. Let us first review the notion of simple homotopy groups.
\begin{defn} \label{def:simple homotopy groups}
For a path-connected space $B$, we define the \emph{simple homotopy groups} \(\pi_n'(B)\) to be the abelian groups
\[\pi_n'(B) = \pi_n(B)/[\pi_1(B),\pi_n(B)]\]
obtained by trivializing the action of $\pi_1(B)$. Explicitly, $\pi_1'(B) = \pi_1(B)^{ab}$ and $\pi_n'(B) = [S^n,B]$ (free homotopy classes of maps) for $n>1$.
\end{defn}

Note that $B$ is a simple space if and only if the canonical homomorphism $\pi_n(B) \to \pi_n'(B)$ is an isomorphism for all $n$. The simple homotopy groups are functorial for maps that do not necessarily preserve the basepoint. In particular, if a group $\mathcal{E}$ acts on $B$, then $\pi_n'(B)$ is a representation of $\mathcal{E}$. This is the reason we consider the simple homotopy groups instead of the ordinary homotopy groups.

We now proceed towards the proof of Theorem \ref{thm:general algebraicity}. We fix the following notation for the remainder of the section.

{\noindent {\bf Notation.}}
\begin{itemize}
\item[--] $X$ is a simply connected finite CW-complex.
\item[--] $L$ is the minimal Quillen model of $X$.
\item[--] $G\leqslant \piaut{X}$ is a subgroup that acts nilpotently on $SH_*(X;\Q)$.
\item[--] $U\leqslant \piaut{X_\Q}$ is the minimal unipotent algebraic subgroup with $\rat(G)\leqslant U$.
\item[--] $\lie u$ is the Lie algebra of $U$. 
\end{itemize}
Recall that the existence and uniqueness of $U$ is guaranteed by Proposition \ref{prop:unipotent groups of self-equivalences} and that $\lie u$ may be viewed as a subalgebra of $H_0(\Der^c L)$ by Theorem \ref{thm:sul-wil}(\ref{thm:sul-wil:lie algebra}).

We also remind the reader that $\deckgrp{U}{X_\Q}$ denotes the group of automorphisms of the $\piaut{X_\Q}$-set $\piaut{X_\Q}/U$ and that this group may be identified with $N_{\piaut{X_\Q}}(U)/U$.

Recall that $\AutNorm{U}{L}$ denotes the preimage of the normalizer $N_{\piaut{X_\Q}}(U)$ under the homomorphism $\Aut(L) \to \piaut{X_\Q}$. There is a surjective homomorphism
\[p\colon \AutNorm{U}{L} \to \deckgrp{U}{X_\Q}.\]
By construction, the space
\[\Bautfunc{U}{X_\Q} = B\big(*,\aut(X_\Q),\piaut{X_\Q}/U \big)\]
carries an action of $\deckgrp{U}{X_\Q}$ and hence an action of $\AutNorm{U}{L}$ via $p$. On the other hand, the group $\AutNorm{U}{L}$ acts on $\Der_{\lie u}^c L$ by conjugation, and hence it acts on the nerve of $\Der_{\lie u}^c L$.




\begin{lemma} \label{lemma:weak model}
The $\AutNorm{U}{L}$-space $p^*\Bautfunc{U}{X_\Q}$ is weakly equivalent to the nerve of the dg Lie algebra $\Der_{\lie u}^c L$, on which $\AutNorm{U}{L}$ acts by conjugation.
\end{lemma}


\begin{proof}
The rationalization $X_\Q$ may be identified with $|\MC_\bullet(L)|$. Proposition \ref{prop:ubridge} shows that $B|\exp_\bullet(\Der_{\lie u}^c L)|$ is weakly equivalent to $B\aut_U(X_\Q)$ as an $\AutNorm{U}{L}$-space, where $\AutNorm{U}{L}$ acts by conjugation. Now consider the weak equivalence
\begin{equation} \label{eq:unconjugating}
B\aut_U(X_\Q) = B\big(*,\aut_U(X_\Q),*\big) \to B\big(*,\aut(X_\Q),\piaut{X_\Q}/U\big),
\end{equation}
induced by the inclusion of $\aut_U(X_\Q)$ into $\aut(X_\Q)$ and the map $*\to \piaut{X_\Q}/U$ that selects the coset $U$.

The space $B\big(*,\aut(X_\Q),\piaut{X_\Q}/U\big)$ has two commuting actions of $\AutNorm{U}{L}$: the conjugation action as in Lemma \ref{lemma:key lemma 2} via the homomorphism $\AutNorm{U}{L} \to \inv{\aut(X_\Q)}$, and the action by automorphisms on the left $\aut(X_\Q)$-space $\piaut{X_\Q}/U$ via the homomorphism $\AutNorm{U}{L}\to \deckgrp{U}{X_\Q}$. The weak equivalence \eqref{eq:unconjugating} is $\AutNorm{U}{L}$-equivariant if $B\big(*,\aut(X_\Q),\piaut{X_\Q}/U\big)$ is given the diagonal action. (It is not equivariant with respect to the conjugation action, because the coset $U\in \piaut{X_\Q}/U$ is not fixed by the left action of $\AutNorm{U}{L}$.) Lemma \ref{lemma:key lemma 2} gives a zig-zag of $\AutNorm{U}{L}$-equivariant weak equivalences connecting $B\big(*,\aut(X_\Q),\piaut{X_\Q}/U\big)$, with the diagonal action of $\AutNorm{U}{L}$, and same space where $\AutNorm{U}{L}$ only acts on $\piaut{X_\Q}/U$.
\end{proof}

Lemma \ref{lemma:weak model} does not give full information about the homotopy type of the $\deckgrp{U}{X_\Q}$-space $\Bautfunc{U}{X_\Q}$, but it is enough for proving that the cohomology groups and the simple homotopy groups are algebraic representations.

\begin{thm} \label{thm:general algebraicity} \leavevmode
\begin{enumerate}[(i)]
\item The group $\deckgrp{U}{X_\Q}$ may be identified with the $\Q$-points of an affine algebraic group.
\item \label{thm:general algebraicity (ii)} The induced homomorphism $\deckgrp{G}{X}\to \deckgrp{U}{X_\Q}$ has finite kernel and image an arithmetic subgroup.

\item \label{thm:general algebraicity (iii)} The representations of $\deckgrp{G}{X}$ in the rational cohomology groups and the simple rational homotopy groups of the space $\Baut{G}{X}$ are restrictions of algebraic representations of $\deckgrp{U}{X_\Q}$.
\end{enumerate}
\end{thm}



\begin{proof}
By Theorem \ref{thm:sul-wil}\eqref{thm:sul-wil:Q-points}, $\piaut{X_\Q}$ may be identified with the $\Q$-points of the algebraic group $\mathcal{G}  = \AUT(L)/\AUT_h(L)$. By hypothesis, $U$ corresponds to the $\Q$-points of a unipotent algebraic subgroup $\mathcal{U}$ of $\mathcal{G}$. It follows from Lemma \ref{lemma:normalizer} and Lemma \ref{lemma:quotient} that $\deckgrp{U}{X_\Q} = N_{\piaut{X_\Q}}(U)/U$ may be identified with the $\Q$-points of the algebraic group $N_{\mathcal{G}}(\mathcal{U})/\mathcal{U}$. This proves the first claim.

The kernel of $N_{\piaut{X}}(G)/G \to N_{\piaut{X_\Q}}(U)/U$ injects into the set $\rat^{-1}(U)/G$, which is finite by Proposition \ref{prop:unipotent groups of self-equivalences}. Lemma \ref{lemma:deck groups} implies that there is an inclusion
\begin{equation} \label{eq:inclusion into arithmetic}
\rat(N_{\piaut{X}}(G)) \leqslant N_{\piaut{X_\Q}}(U) \cap \rat(\piaut{X}).
\end{equation}
The latter group is an arithmetic subgroup of $N_{\piaut{X_\Q}}(U)$ by Theorem~\ref{thm:arithmeticity} and Lemma \ref{lemma:arithmetic preimage}, so if we can show that \eqref{eq:inclusion into arithmetic} is the inclusion of a finite index subgroup, then it follows that $\rat(N_{\piaut{X}}(G))$ is an arithmetic subgroup of $N_{\piaut{X_\Q}}(U)$. It then follows from Lemma \ref{lemma:image arithmetic} that the image of $\rat(N_{\piaut{X}}(G))$ in $\deckgrp{U}{X_\Q} = N_{\piaut{X_\Q}}(U)/U$ is arithmetic, and then we are done because this agrees with the image of $\deckgrp{G}{X} = N_{\piaut{X}}(G)/G$. To show that the index is finite, observe that \eqref{eq:inclusion into arithmetic} is obtained by applying $\rat$ to the inclusion
\begin{equation} \label{eq:finite index}
N_{\piaut{X}}(G) \leqslant \rat^{-1}\big(N_{\piaut{X_\Q}}(U)\big).
\end{equation}
The latter group acts on the set of subgroups of $\rat^{-1}(U)$ by conjugation. The stabilizer of $G$ is $N_{\piaut{X}}(G)$ and the orbit of $G$ consists of subgroups of the form $gGg^{-1}$ where $g\in \piaut{X}$ satisfies $g\rat^{-1}(U)g^{-1} = \rat^{-1}(U)$. Note that all subgroups of this form have the same (finite) index in $\rat^{-1}(U)$. There are only finitely many subgroups of a given finite index in a finitely generated group, so the orbit must be finite. It follows that the index of the inclusion \eqref{eq:finite index} is finite and in turn that the same holds for \eqref{eq:inclusion into arithmetic}.



The action of $\deckgrp{G}{X}$ on the homology of $\Baut{G}{X}$ may be computed as the induced action on the homology of the $\deckgrp{G}{X}$-space $\Bautfunc{G}{X}$. The latter is connected to $\Bautfunc{U}{X_\Q}$ by a sequence of $\deckgrp{G}{X}$-equivariant rational homology equivalences by Proposition \ref{lemma:upgraded main lemma}, so it suffices to show that the cohomology groups of $\Bautfunc{U}{X_\Q}$ are algebraic representations of $\deckgrp{U}{X_\Q}$.


Since the cohomology of $B\exp_\bullet(\lie g)$ is naturally isomorphic to the Chevalley-Eilenberg cohomology of $\lie g$ (see Proposition \ref{prop:ce cochains}), Lemma \ref{lemma:weak model} implies that there is an $\AutNorm{U}{L}$-equivariant isomorphism
\begin{equation} \label{eq:rep}
H^k\big(\Bautfunc{U}{X_\Q}\big) \cong H_{\CE}^k(\Der_{\lie u}^c L)
\end{equation}
for every $k$. Note that $\AutNorm{U}{L} = \AUTNorm{\mathcal{U}}{L}(\Q)$ and that there is an isomorphism of algebraic groups $\AUTNorm{\mathcal{U}}{L}/\mathcal{W} \to N_{\mathcal{G}}(\mathcal{U})/\mathcal{U}$, where $\AUTNorm{\mathcal{U}}{L}$ and $\mathcal{W}$ are the preimages of $N_{\mathcal{G}}(\mathcal{U})$ and $\mathcal{U}$ under $\pi\colon \AUT(L) \to \mathcal{G}$. Since $\pi$ has unipotent kernel, it follows that $\mathcal{W}$ is unipotent. The right-hand side of \eqref{eq:rep} is manifestly an algebraic representation of the algebraic group $\AUTNorm{\mathcal{U}}{L}$. The action of $\mathcal{W}(\Q)$ is trivial, because the action of $\AUTNorm{\mathcal{U}}{L}(\Q) = \AutNorm{U}{L}$ on the left-hand side factors through $\deckgrp{U}{X_\Q}$, which may be identified with $\AUTNorm{U}{L}(\Q)/\mathcal{W}(\Q)$. Since $\mathcal{W}(\Q)$ is dense in $\mathcal{W}$ (by \emph{e.g.}~\cite[Theorem 17.93]{mil17}), it follows that the representation is trivial as an algebraic representation of $\mathcal{W}$. Thus, it is an algebraic representation of $\AUTNorm{U}{L}/\mathcal{W} \cong N_{\mathcal{G}}(\mathcal{U})/\mathcal{U}$.
This shows that the cohomology groups of $\Bautfunc{U}{X_\Q}$ are algebraic representations of $\deckgrp{U}{X_\Q}$.

To show algebraicity of the simple homotopy groups, one argues as above using the $\AutNorm{U}{L}$-equivariant isomorphism
\[\pi_{k+1}'(\Bautfunc{U}{X_\Q}) \cong H_k(\Der^c L)/\big[H_0(\Der^c L),H_k(\Der^c L)\big],\]
which can be deduced from Lemma \ref{lemma:weak model} and Proposition \ref{prop:homotopy of exp}.
\end{proof}


For later use, we record the following fact that was observed in the above proof.
\begin{slemma} \label{lemma:unipotent W}
The kernel of $p\colon \AutNorm{U}{L}\to \deckgrp{U}{X_\Q}$ is unipotent.
\end{slemma}

Before we proceed to the proof of Theorem \ref{thm:main}, let us discuss two interesting applications of Theorem \ref{thm:general algebraicity}.

\subsubsection{Arithmeticity of higher homotopy groups} \label{sec:higher arithmeticity}
As discussed in the introduction, Sullivan and Wilkerson proved that the homomorphism from $\piaut{X} = \pi_1\Baut{}{X}$ to $\piaut{X_\Q} = \pi_1\Baut{}{X_\Q}$ has finite kernel and image an arithmetic subgroup. It is natural to ask whether there is a similar statement for the higher homotopy groups of $\Baut{}{X}$. The following shows that these are `arithmetic representations' of $\piaut{X}$, in the sense that they each admit a map, with finite kernel, onto a lattice inside an algebraic representation of the group $\piaut{X_\Q}$.

\begin{cor} \label{cor:algebraicity}
Let $X$ be a simply connected finite CW-complex.
The representations of $\piaut{X}$ in the rational cohomology and homotopy groups of the universal cover of $\Baut{}{X}$ are restrictions of finite-dimensional algebraic representations of the algebraic group $\piaut{X_\Q}$.
\end{cor}

\begin{proof}
Apply Theorem \ref{thm:general algebraicity} to the trivial group $G=1$. Since $\Baut{1}{X}$ is simply connected, the simple homotopy groups agree with the homotopy groups.
\end{proof}



\subsubsection{Homotopy Torelli spaces} \label{sec:torelli}
Kupers and Randal-Williams \cite{krw20} recently established algebraicity and nilpotence results for Torelli groups of the manifolds $W_g$, \emph{i.e.}, groups of diffeomorphisms that act trivially on integral homology. By analogy, we can consider the homotopy Torelli monoid of the space $X$, i.e., the monoid $\torelli{X}$ of self-homotopy equivalences of $X$ that act trivially on $H_*(X;\Z)$.
The following shows that a homotopical counterpart of the main result of \cite{krw20} is valid for \emph{arbitrary} simply connected finite complexes.


Note that the homotopy Torelli space sits in a homotopy fiber sequence
\[B\torelli{X} \to \Baut{}{X} \to B\GL^{\piaut{X}}(H_*(X;\Z)),\]
where $\GL^{\piaut{X}}(H_*(X;\Z))$ is the image of the homomorphism $\piaut{X} \to \GL(H_*(X;\Z))$.

\begin{cor} \label{cor:torelli}
Let $X$ be a simply connected finite CW-complex. 

\begin{enumerate}[(i)]
\item The group $\GL^{\piaut{X_\Q}}(H_*(X;\Q))$ of isomorphisms of $H_*(X;\Q)$ that are realizable by a self-homotopy equivalence of $X_\Q$ is a linear algebraic group.

\item The homomorphism $\GL^{\piaut{X}}(H_*(X;\Z)) \to \GL^{\piaut{X_\Q}}(H_*(X;\Q))$ has finite kernel and image an arithmetic subgroup.

\item The representations of $\GL^{\piaut{X}}(H_*(X;\Z))$ in
the rational (co)homology and rational simple homotopy groups of the space $B\torelli{X}$ are restrictions of algebraic representations of the linear algebraic group $\GL^{\piaut{X_\Q}}(H_*(X;\Q))$.

\end{enumerate}
\end{cor}

\begin{proof}
Clearly, $\torelli{X} = \aut_G(X)$, where $G\leqslant \piaut{X}$ is the kernel of the action on $H_*(X;\Z)$. Let $U\leqslant \piaut{X_\Q}$ be the kernel of the algebraic representation $H_*(X;\Q)$. Then $U$ is unipotent by Theorem~\ref{thm:sul-wil}\eqref{thm:sul-wil:unipotent ker}. The cokernel of $G\to \rat^{-1}(U)$ may be identified with the kernel of
\[ \GL^{\piaut{X}}(H_*(X;\Z)) \to \GL^{\piaut{X_\Q}}(H_*(X;\Q)),\]
which is clearly finite. Hence, Proposition \ref{prop:unipotent groups of self-equivalences} implies that $U$ is the minimal algebraic unipotent subgroup of $\piaut{X_\Q}$ containing $G$, so Theorem \ref{thm:general algebraicity} applies.

\end{proof}

\subsection{Space-level algebraicity}
\label{sec:algebraic lie model}
In this section, we will prove Theorem \ref{thm:main}. We retain the notation of the previous section. The following two lemmas are key.

\begin{lemma} \label{lemma:section}
If the algebraic group $\deckgrp{U}{X_\Q}$ is reductive, then the homomorphism
$$p\colon \AutNorm{U}{L} \to \deckgrp{U}{X_\Q}$$
admits a section $\sigma$ that is a morphism of algebraic groups.
\end{lemma}

\begin{proof}
The kernel of $p$ is unipotent by Lemma \ref{lemma:unipotent W}, so if $\deckgrp{U}{X_\Q}$ is reductive, then $p$ may be identified with the map from $\AutNorm{U}{L}$ to its maximal reductive quotient and an algebraic section exists by Theorem \ref{thm:Levi}.
\end{proof}

\begin{lemma} \label{lemma:final zig-zag}
If the homomorphism
$$p\colon \AutNorm{U}{L} \to \deckgrp{U}{X_\Q}$$
admits a section $\sigma$, then there is a zig-zag of $\deckgrp{G}{X}$-equivariant rational equivalences that connects $\Bautfunc{G}{X}$ to the nerve of the dg Lie algebra $\sigma^*\Der_{\lie u}^c(L)$.
Moreover, if $\sigma$ is a morphism of algebraic groups, then the graded components of $\sigma^* \Der_{\lie u}^c(L)$ are algebraic representations of $\deckgrp{U}{X_\Q}$.
\end{lemma}

\begin{proof}
By Proposition \ref{lemma:upgraded main lemma}, the spaces $\Bautfunc{G}{X}$ and $\Bautfunc{U}{X_\Q}$ are connected by a zig-zag of $\deckgrp{G}{X}$-equivariant rational equivalences. That $\sigma$ is a section of $p$ means that $p\sigma =1$, so by Lemma \ref{lemma:weak model}, we get weak equivalences of $\deckgrp{U}{X_\Q}$-spaces
$$\Bautfunc{U}{X_\Q} = \sigma^* p^* \Bautfunc{U}{X_\Q} \sim \sigma^* B|\exp_\bullet(\Der_{\lie u}^c L)| = B|\exp_\bullet(\sigma^* \Der_{\lie u}^c L)|.$$
The graded components of $\Der_{\lie u}^c(L)$ are manifestly algebraic representations of the algebraic group $\AutNorm{U}{L}$. Hence, if $\sigma$ is a morphism of algebraic groups, then the graded components of $\sigma^*\Der_{\lie u}^c(L)$ are algebraic representations of $\deckgrp{U}{X_\Q}$.
\end{proof}





We will now prove Theorem \ref{thm:main}. First, we need to specify the cover $\Baut{u}{X}$ and the groups $\Gamma(X)$ and $R(X)$.

\begin{defn}
We define
\begin{itemize}
\item[--] $\Baut{u}{X} = \Baut{G}{X}$,
\item[--] $\Gamma(X) = \deckgrp{G}{X}$,
\item[--] $R(X) = \deckgrp{U}{X_\Q}$,
\end{itemize}
where $U$ is the unipotent radical of $\piaut{X_\Q}$ and $G$ is the preimage of $U$ under the homomorphism $\rat\colon \piaut{X} \to \piaut{X_\Q}$.
\end{defn}

\begin{prop}
$R(X)$ is a reductive algebraic group and $\Gamma(X)$ may be identified with an arithmetic subgroup of $R(X)$.
\end{prop}

\begin{proof}
The unipotent radical $U$ is the maximal normal unipotent algebraic subgroup of $\piaut{X_\Q}$. In particular, $U$ is normal in $\piaut{X_\Q}$ and $G = \rat^{-1}(U)$ is normal in $\piaut{X}$. Hence, $\Gamma(X) = \piaut{X}/G$ and $R(X)= \piaut{X_\Q}/U$. We recognize the latter as the maximal reductive quotient of $\piaut{X_\Q}$ (cf.~\S\ref{sec: unipotent and reductive gps}). Since $G = \rat^{-1}(U)$, we may identify $\Gamma(X) = \piaut{X}/G$ with the image of $\piaut{X}$ in $R(X) = \piaut{X_\Q}/U$. This is an arithmetic subgroup of $R(X)$ by Theorem \ref{thm:general algebraicity}\eqref{thm:general algebraicity (ii)}.
\end{proof}
By that, the first part of Theorem \ref{thm:main} has been verified. Next, we need to construct the dg Lie algebra $\lie g (X)$.

\begin{defn} \label{def:g(X)}
Define
$$\lie g (X) = \sigma^* \Der_{\lie u}^c(L),$$
where $\lie u$ is the Lie algebra of the unipotent radical $U$ of $\piaut{X_\Q}$, and where
$$\sigma\colon R(X) \to \AutNorm{U}{L}$$
is a section of $p$ as in Lemma \ref{lemma:section}, which exists since $R(X)$ is reductive.
\end{defn}

\begin{thm} \label{thm:main zig-zag}
The dg Lie algebra $\lie g(X)$ is nilpotent, its graded components are algebraic representations of $R(X)$, and
there is a zig-zag of $\Gamma(X)$-equivariant rational equivalences that connects $\Baut{u}{X}$ to the nerve of $\lie g(X)$.
\end{thm}

\begin{proof}
Nilpotence is shown in Proposition \ref{prop:nilpotence of the dgl}. The rest is immediate from Lemma \ref{lemma:section} and Lemma \ref{lemma:final zig-zag}.
\end{proof}

\begin{cor} \label{cor:homotopy orbits}
The space $\Baut{}{X}$ is rationally equivalent to the homotopy orbit space $\nerve{\lie g(X)}_{h\Gamma(X)}$.
\end{cor}

\begin{proof}
We can recover $\Baut{}{X}$ as the homotopy orbit space
\begin{equation*}
\Baut{}{X} \sim \Baut{u}{X}_{h\Gamma(X)}.
\end{equation*}
Applying homotopy orbits to the zig-zag in Theorem \ref{thm:main zig-zag} yields the result.
\end{proof}
This finishes the proof of Theorem \ref{thm:main}.

\begin{rmk} \label{rmk:challenge}

It is natural to ask whether the rational homotopy type of the $\deckgrp{G}{X}$-space $\Bautfunc{G}{X}$ can be represented by a dg Lie algebra of algebraic representations of $\deckgrp{U}{X_\Q}$ for arbitrary subgroups $G\leqslant \piaut{X}$ that act nilpotently on $SH_*(X;\Q)$. The main problem is to rectify the action up to homotopy of $\deckgrp{U}{X_\Q}$ on $\Der_{\lie u}^c L$. As we have seen, this can be done if $\deckgrp{U}{X_\Q}$ is reductive. One can show that $\deckgrp{U}{X_\Q}$ is reductive if and only if $U$ is the unipotent radical of a parabolic subgroup of $\piaut{X_\Q}$, but $\deckgrp{U}{X_\Q}$ will not be reductive in general. Still, even if the action can not be rectified, one can ask whether a suitable modification of $\Der_{\lie u}^c L$ could accommodate an action of $\deckgrp{U}{X_\Q}$. Since our main applications are in the case when $\deckgrp{U}{X_\Q}$ is reductive, we will not pursue this question further in this paper. We leave it as a challenge to the interested reader.
\end{rmk}



\subsection{A commutative cochain algebra model for the classifying space} \label{sec:cdga models}
In this section, we will use Theorem \ref{thm:main} to construct a commutative differential graded algebra model for $\Baut{}{X}$. Before we can formulate the result, we need to review the definition of the commutative cochains of a simplicial set with local coefficients, following Halperin \cite{halperin83}.
\subsubsection{Commutative cochains with local coefficients} \label{sec:local systems}
For a simplicial set $K$, we let $(\Delta \downarrow K)$ denote the simplex category. An object is a simplex $\sigma\colon \Delta^n \to K$ and a morphism from $\sigma$ to $\tau$ is a commutative diagram
\[
\begin{tikzcd}
\Delta^n \ar[dr, "\sigma"] \ar[d, "\varphi"']  \\ \Delta^m \ar[r, "\tau"'] & K.
\end{tikzcd}
\]

\begin{defn}[{\cite[pp.~150--151]{halperin83}}]
A local system on a simplicial set $K$ with values in a category $\CC$ is a functor \[F\colon (\Delta \downarrow K)^{op} \to \CC.\]
The global sections of a local system $F$ on $K$ is defined as the inverse limit
\[F(K) = \varprojlim_{\sigma \in (\Delta \downarrow K)} F_\sigma,\]
provided this limit exists in $\CC$.
\end{defn}

\begin{rmk} \label{remark:local systems}
To compare this with the notion of a local system of coefficients defined in terms of representations of the fundamental groupoid, one should observe that the groupoidification of the category $(\Delta \downarrow K)$ is a model for the fundamental groupoid of $K$, \emph{cf.}~\cite[\S III.1]{gj99}. This means that the category of local systems $F$ such that $F(\varphi)$ is invertible for every morphism $\varphi$ in $(\Delta\downarrow K)$ is equivalent to the category of representations of the fundamental groupoid. However, since we do not require invertibility of $F(\varphi)$ in general, the notion of a local system is more general.
\end{rmk}

\begin{eg}\leavevmode
\begin{enumerate}[(i)]
\item If $\Gamma$ is a discrete group and $M$ is an object of $\CC$ with an action of $\Gamma$, then there is an associated local system on the nerve $B\Gamma$, also denoted $M$, where $M_\sigma = M$ for all $\sigma = (\gamma_1,\ldots,\gamma_n)\colon \Delta^n \to B\Gamma$ and $d_i\colon M_\sigma\to M_{d_i\sigma}$ is multiplication by $\gamma_n$ for $i=n$ and the identity otherwise.
\item A simplicial object $X_\bullet \colon \Delta^{op} \to \CC$ determines a local system on every simplicial set $K$ by restriction along $(\Delta \downarrow K)^{op}\to \Delta^{op}$. In particular, the simplicial cdga $\Omega_\bullet$ determines a local system $\Omega^*$ of cdgas on every simplicial set $K$. The global sections $\Omega^*(K)$ is the usual model for Sullivan's cdga of polynomial differential forms on $K$, also denoted $A_{PL}^*(K)$.

\item More generally, if $F$ is a local system of cochain complexes on $K$, then $\Omega^*(K;F)$ may be defined as the global sections of the local system $\Omega^*\tensor F$, \emph{cf.}~\cite[Definition 13.10]{halperin83}.
If $A$ is local system of cochain algebras, then $\Omega^*(K;A)$ is a cochain algebra, which is commutative if $A$ is.
\end{enumerate}
\end{eg}

\begin{prop}
Let $K$ be a simplicial set and let $A$ be a local system of $\Q$-cochain complexes on $K$. The cochain complex $\Omega^*(K;A)$ is naturally quasi-isomorphic to $C^*(K;A)$. If $A$ is a local system of dg algebras, then $\Omega^*(K;A)$ and $C^*(K;A)$ are quasi-isomorphic as dg algebras.
\end{prop}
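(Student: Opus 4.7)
The plan is to extend the classical de Rham comparison of $\Omega^*$ and $C^*$ to the local coefficient setting via an explicit integration map, and then to verify that this map is a quasi-isomorphism by comparing the spectral sequences arising from the skeletal filtration of $K$.

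First I would define a natural cochain map $\int_K \colon \Omega^*(K;A) \to C^*(K;A)$ as follows. An element $\omega\in \Omega^k(K;A)$ is a compatible family $(\omega_\sigma)$ with $\omega_\sigma \in \bigoplus_{i+j=k}\Omega^i(\Delta^{|\sigma|})\otimes A_\sigma^j$, one for each simplex $\sigma\colon \Delta^{|\sigma|}\to K$. For each $n$-simplex $\sigma$, let $(\int_K\omega)(\sigma)\in A_\sigma^{k-n}$ be the fiber integral over $\Delta^n$ of the bidegree $(n,k-n)$ component of $\omega_\sigma$; this is the $(k-n)$-part of a total-degree-$k$ simplicial cochain with values in $A$. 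That $\int_K$ is a cochain map is Stokes' theorem, where the boundary contributions get identified with values on faces via the local system structure of $A$ and the compatibility of $\omega$ with morphisms in $(\Delta\downarrow K)^{op}$.

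Next I would check that $\int_K$ is a quasi-isomorphism by a spectral sequence comparison. Let $F^p$ consist of those cochains which vanish on the $(p-1)$-skeleton $K^{(p-1)}$. This defines decreasing filtrations on both $\Omega^*(K;A)$ and $C^*(K;A)$ which are preserved by $\int_K$, and the associated spectral sequences converge strongly since $K$ is a colimit of its finite sub-simplicial sets and the filtration is finite on each such. On the $E_1$-page, each side splits as a product, indexed by the non-degenerate $p$-simplices $\sigma$, of the relative complex $\Omega^*(\Delta^p,\partial \Delta^p;A_\sigma)$, respectively $C^*(\Delta^p,\partial \Delta^p;A_\sigma)$. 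Since the coefficient complex $A_\sigma$ is constant on each such simplex, these reduce by a K\"unneth argument to $A_\sigma$ (shifted by $p$), and the classical de Rham theorem on $\Delta^p$ relative to its boundary identifies the induced map on $E_1$ with the identity. By the comparison theorem for spectral sequences, $\int_K$ is a quasi-isomorphism.

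For the multiplicative statement, $\int_K$ is not a dg algebra map, so I would connect the two sides by a natural zigzag. A convenient intermediate object is the Thom--Whitney (Navarro-Aznar) totalization of the cosimplicial cdga $[n]\mapsto \Omega^*(\Delta^n)\otimes A|_{\Delta^n}$, which is a cdga receiving a multiplicative quasi-isomorphism from $\Omega^*(K;A)$ (a twisted form of the untwisted result) and comparing with $C^*(K;A)$ via a shuffle/Eilenberg--Zilber zigzag of dg algebra quasi-isomorphisms; alternatively, Sullivan's acyclic-models argument for cdgas applied to the functors $K\mapsto \Omega^*(K;A)$ and $K\mapsto C^*(K;A)$ produces such a zigzag directly. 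The main obstacle is precisely this step: keeping track of strict multiplicativity through the comparison, since integration is only homotopy-multiplicative. Because $A$ enters only passively---simplex by simplex, as coefficient values---this reduces cleanly to the untwisted multiplicative de Rham theorem.
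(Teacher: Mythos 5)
You take essentially the same route as the paper: an integration map gives the (non-multiplicative) quasi-isomorphism, and a natural zig-zag of cdga maps handles multiplicativity. The paper delegates both steps to citations (Halperin, Theorem 14.18 for the integration map; F\'elix--Halperin--Thomas \S10(d) for the multiplicative zig-zag, asserting the argument goes through with coefficients $A$), whereas you sketch them from scratch.

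Two spots in your spectral-sequence verification need more care. First, the convergence claim for infinite-dimensional $K$: ``$K$ is a colimit of its finite sub-simplicial sets'' becomes, on cochains, a \emph{limit}, and strong convergence does not follow from the fact that the filtration is finite on each finite-dimensional piece. What does work is completeness of the filtration --- $\Omega^*(K;A)=\varprojlim_p \Omega^*(K^{(p)};A)$ with surjective transition maps (and likewise for $C^*$) --- so that the finite-dimensional case (equivalently, the $E_1$-isomorphism) feeds into a Milnor $\varprojlim^1$ argument to conclude the quasi-isomorphism in general. Second, the phrase ``the coefficient complex $A_\sigma$ is constant on each such simplex'' is not literally correct in Halperin's framework: local systems are mere functors $(\Delta\downarrow K)^{op}\to\CC$ with no invertibility hypothesis, so the restricted system $\sigma^*A$ on $\Delta^p$ is generically non-constant. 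What rescues the computation is that the associated-graded at a non-degenerate $p$-simplex $\sigma$ sees only the value $A_\sigma$ at the top non-degenerate cell (the face constraints are already absorbed by the filtration), but this identification should be justified rather than asserted; it is part of what Halperin's extendability machinery supplies. Neither issue is fatal, and with these two points tightened your argument is complete and agrees with the paper's.
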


\begin{proof}
Integration provides a natural quasi-isomorphism 
\[\Omega^*(K;A) \to C^*(K;A)\]
\emph{cf.}~\cite[Theorem 14.18]{halperin83}. The integration map is not multiplicative on the cochain level, but the argument in \cite[\S 10(d)]{fht01} goes through with coefficients $A$, providing a zig-zag of dg algebra quasi-isomorphisms.
\end{proof}

In particular, for a discrete group $\Gamma$ and a cochain complex $M$ of $\Q[\Gamma]$-modules, the cohomology of $\Omega^*(\Gamma,M) = \Omega^*(B\Gamma;M)$
agrees with group cohomology $H^*(\Gamma,M)$ (as defined in \emph{e.g.}~\cite[VII.5]{bro82}).
If $A$ is a commutative cochain algebra over $\Q$ with an action of $\Gamma$, then $\Omega^*(\Gamma,A)$ is a commutative cochain algebra model for $C^*(\Gamma,A)$. The following is a commutative cochain algebra version, in characteristic zero, of well-known results for homology and singular chains; \emph{cf.}~\emph{e.g.}~\cite[\S VII.7]{bro82} or \cite[\S 13]{may75}.
\begin{prop} \label{prop:commutative cochains}
Let $X$ be a space with a $\Gamma$-action.
The commutative cochain algebras $\Omega^*(X_{h\Gamma})$ and $\Omega^*(\Gamma,\Omega^*(X))$ are quasi-isomorphic.
\end{prop}

\begin{proof}
The local system $F$ of the fibration $X_{h\Gamma} \to B\Gamma$
in the sense of \cite[p.248]{halperin83}
may be identified up to quasi-isomorphism with the local system associated to the $\Gamma$-cdga $\Omega^*(X)$. Granted this, the combination of \cite[Lemma 19.21]{halperin83} and \cite[Theorem 13.12]{halperin83} applied to the local system $F$ shows that
\[\Omega^*(X_{h\Gamma}) \cong F(B\Gamma) \sim \Omega^*(B\Gamma;F) \sim \Omega^*(B\Gamma;\Omega^*(X)). \]
\end{proof}

Now we are ready to formulate and prove Corollary \ref{cor:main} from the introduction.

\begin{thm}\label{thm:cdga model}
The commutative dg algebra $\Omega^*(\Baut{}{X})$ is quasi-isomorphic to the commutative dg algebra $ \Omega^*(\Gamma(X),C_{\CE}^*(\lie g(X)))$.

\end{thm}

\begin{proof}
It follows from Corollary \ref{cor:homotopy orbits} that $\Omega^*(\Baut{}{X})$ is quasi-isomorphic to $\Omega^*(\nerve{\lie g(X)}_{h\Gamma(X)})$. By Proposition \ref{prop:commutative cochains}, the latter commutative cochain algebra is quasi-isomorphic to $\Omega^*(\Gamma(X),\Omega^*(\nerve{\lie g(X)}))$. The proof is completed by using the natural quasi-isomorphism $C_{\CE}^*(\lie g(X)) \to \Omega^*(\nerve{\lie g(X)})$ of Proposition \ref{prop:ce cochains}.

\end{proof}



Let us compare with the approach of G\'omez-Tato--Halperin--Tanr\'e \cite{gtht00}. They study the fiberwise rationalization of $\Baut{}{X} \to B\piaut{X}$ by using local systems, in the sense of \S\ref{sec:local systems}, of cdgas over $B\piaut{X}$. As discussed in Remark \ref{remark:local systems}, such local system are more general, and more complicated, than cdgas with an action of $\piaut{X}$.
Our approach is to instead consider the fiberwise rationalization of $\Baut{}{X} \to B\Gamma(X)$. This has two significant advantages: Firstly, it lets us work with cdgas with an action of $\Gamma(X)$ rather than local systems of cdgas over $B\Gamma(X)$. Secondly, semisimplicity of algebraic $\Gamma(X)$-representations lets us construct minimal models using familiar methods. Indeed, the counterpart of Problem 3 of \cite[\S7.3]{gtht00} in our setting admits the following solution:


\begin{thm}
The minimal Sullivan model $\MM(X)$ for $\Baut{u}{X}$ admits an algebraic action of $R(X)$ such that the spatial realization $\spatial{\MM(X)}$ is rationally equivalent to $\Baut{u}{X}$ as a $\Gamma(X)$-space and $\Omega^*(\Baut{}{X})$ is quasi-isomorphic to $\Omega^*(\Gamma(X),\MM(X))$ as a commutative cochain algebra.
\end{thm}

\begin{proof}
The minimal model is unique up to isomorphism, so it is enough to construct a particular minimal model that admits an algebraic $R(X)$-action with the desired properties. Let $\lie g(X)$ be the Lie model for $\Baut{u}{X}$ in Theorem \ref{thm:main zig-zag}. By semisimplicity of $\Rep_\Q(R(X))$, we can find a contraction of chain complexes of algebraic $R(X)$-representations
\[
    \begin{tikzcd}
        \lie g(X) \ar[r, shift left, "f"] \ar[loop left, "h"] & H_*(\lie g(X)). \ar[l, shift left, "g"]
    \end{tikzcd}
\]
Applying \cite[Theorem 1.3]{be14}, we get a contraction of cocommutative chain coalgebras in $\Rep_\Q(R(X))$,
\[
    \begin{tikzcd}
        C_*^{\CE}(\lie g(X)) \ar[r, shift left, "F'"] \ar[loop left, "H'"] & (\Lambda^c(V),d'), \ar[l, shift left, "G'"]
    \end{tikzcd}
\]
where $V$ is the suspension of $H_*(\lie g(X))$. The differential $d'$ is given by the explicit formula
\[d' = FtG +FtHtG + Ft(Ht)^2G + \cdots,\]
and $F'$, $G'$, $H'$ are given by similar formulas.
Here, $t$ is the quadratic part of the differential of $C_*^{\CE}(\lie g(X))$ that encodes the Lie bracket, $F$ and $G$ are the morphisms of dg coalgebras induced by $f$ and $g$, and $H$ is the `symmetrized tensor trick homotopy' \cite[\S5]{be14}. The dual of  $(\Lambda^c(V),d')$ is a minimal model $\MM(X)$ for $\Baut{u}{X}$. By construction, it has an algebraic action of $R(X)$ and the dual of $G'$ is a quasi-isomorphism of cochain algebras of $R(X)$-representations $C_{CE}^*(\lie g (X)) \to \MM(X)$. This induces a quasi-isomorphism of commutative cochain algebras $\Omega^*(\Gamma(X),C_{CE}^*(\lie g (X))) \to \Omega^*(\Gamma(X),\MM(X))$, so it follows from Theorem \ref{thm:cdga model} that $\Omega^*(\Gamma(X),\MM(X))$ is a cdga model for $\Omega^*(\Baut{}{X})$.
Applying spatial realization, we get an $R(X)$-equivariant weak equivalence $\spatial{\MM(X)} \to \spatial{C_{CE}^*(\lie g (X) )} \cong \langle \lie g (X) \rangle$ and the latter is a $\Gamma(X)$-equivariant model for $\Baut{u}{X}$ by Theorem \ref{thm:main zig-zag}.

We stress that this construction is completely explicit once the contraction between $\lie g(X)$ and its homology has been fixed.
\end{proof}

\subsection{The cohomology ring of the classifying space} \label{sec:cohomology}

The following result, which was stated as Corollary \ref{cor:cohomology ring} in the introduction, is perhaps the most striking consequence of Theorem \ref{thm:cdga model}.

\begin{thm} \label{thm:cohomology ring with coefficients}
There is an isomorphism of graded algebras
\[H^*(\Baut{}{X};\Q) \cong H^*(\Gamma(X),H_{\CE}^*(\lie g(X))).\]
\end{thm}

\begin{proof}
The Chevalley--Eilenberg complex \(C_{\CE}^*(\lie g(X))\) is a cochain complex of finite-dimensional algebraic representations of \(R(X)\).
Since \(R(X)\) is reductive, every such representation is semisimple, and thus the cochain complex \(C_{\CE}^*(\lie g(X))\) is split, which means that there is a contraction of cochain complexes in $\Rep_\Q(R(X))$,
\[
    \begin{tikzcd}
        C_{\CE}^*(\lie{g}(X)) \ar[r, shift left, "p"] \ar[loop left, "h"] & H_\CE^*(\lie{g}(X)). \ar[l, shift left, "\nabla"]
    \end{tikzcd}
\]
That is, \(p\) and \(\nabla\) are chain maps such that \(p(z) = [z]\) whenever \(z \in C_{\CE}^*(\lie{g}(X))\) is a cocycle, \(p \nabla = \id\), and \(h\) satisfies \(\id - \nabla p = d h + h d\), see \emph{e.g.}~\cite[Lemma~B.1]{bm20}.

Applying the dg functor \(\Omega^*(\Gamma(X);-)\) yields a new contraction
\[
    \begin{tikzcd}
        \Omega^*(\Gamma(X),C_{\CE}^*(\lie{g}(X))) \ar[r, shift left, "p_*"] \ar[loop left, start anchor={[yshift=-1.8ex]west}, end anchor={[yshift=1.8ex]west}, distance=4.5em, "h_*"] & \Omega^*(\Gamma(X),H_\CE^*(\lie{g}(X))). \ar[l, shift left, "\nabla_*"]
    \end{tikzcd}
\]
We can now use the homotopy transfer theorem (see \emph{e.g.}~\cite{be14}) to produce a $C_\infty$-algebra structure $\{\mu_n\}_{n\geq 2}$ on $$A=\Omega^*(\Gamma(X);H_{\CE}^*(\lie g(X)))$$ and an extension of  $\nabla_*$ to a $C_\infty$-quasi-isomorphism. The multiplication in this $C_\infty$-algebra structure is given by $\mu_2 = p_* m_2 (\nabla_*\tensor \nabla_*)$, where $m_2$ is the multiplication on $\Omega^*(\Gamma(X);C_{\CE}^*(\lie g(X)))$, and one checks that this agrees with the multiplication on $\Omega^*(\Gamma(X);H_{\CE}^*(\lie g(X)))$, coming from viewing $H_{\CE}^*(\lie g (X))$ as a $\Gamma(X)$-cdga with trivial differential. The cdga $(A,\mu_2)$ is of course not equivalent to the $C_\infty$-algebra $(A,\mu_2,\mu_3,\ldots)$ in general, but they have isomorphic cohomology rings.
\end{proof}

\section{The ingredients of the algebraic model}
\label{sec:concrete descriptions}
In this section, we will give more concrete descriptions of the ingredients $R(X)$, $\Gamma(X)$, $\mathfrak{g}(X)$, $\Baut{u}{X}$ of Theorem \ref{thm:main} and we will state counterparts of our main results for finite Postnikov stages. In \S\ref{sec:case studies}, the results of the present section will be used to make explicit calculations.

\subsection{The groups $R(X)$ and $\Gamma(X)$}
Let $H_*(X;\Q)^{ss}$ denote the `semisimplification' of $H_*(X;\Q)$ as a representation of the algebraic group $\piaut{X_\Q}$. Recall from \S\ref{sec: unipotent and reductive gps} that this is the, unique up to isomorphism, semisimple $\piaut{X_\Q}$-representation
\[H_*(X;\Q)^{ss} = \bigoplus_{i=1}^n V_i/V_{i-1},\]
where
\begin{equation} \label{eq:composition series}
0 = V_0 \subset V_1 \subset \cdots \subset V_n = H_*(X;\Q)
\end{equation}
is any choice of composition series.
For a group $G$ and a representation $V$ of $G$, let $\GL^G(V)$ denote the image of the homomorphism $G\to \GL(V)$.

\begin{thm} \label{thm:groups}
There are group isomorphisms
\[R(X) \cong \GL^{\piaut{X_\Q}} \big( H_*(X;\Q)^{ss} \big),\]
\[\Gamma(X) \cong \GL^{\piaut{X}}\big(H_*(X;\Q)^{ss} \big).\]
In other words, $R(X)$, respectively $\Gamma(X)$, may be identified with the group of automorphisms of $H_*(X;\Q)^{ss}$ that are induced by a self-homotopy equivalence of $X_{\Q}$, respectively $X$.
\end{thm}


\begin{proof}
By definition, the group $R(X)$ is the maximal reductive quotient of $\piaut{X_\Q}$.
By Theorem~\ref{thm:sul-wil}(\ref{thm:sul-wil:unipotent ker}), the rational homology \(H_*(X; \Q)\) is an algebraic representation of \(\piaut{X_\Q}\) with unipotent kernel.
Hence by Lemma~\ref{lemma:gr V}, the maximal reductive quotient $R(X)$ of $\piaut{X_\Q}$ may be identified with the image of \(\piaut{X_\Q}\) in \(\GL(H_*(X;\Q)^{ss})\). By definition, the group \(\Gamma(X)\) is the image of \(\piaut{X}\) in \(R(X)\). In view of the above, this agrees with \(\GL^{\piaut{X}}(H_*(X;\Q)^{ss})\).
\end{proof}

Often, in fact in all cases we consider in \S\ref{sec:case studies} except the last, the equivalent conditions in the corollary below are satisfied, which simplifies the descriptions.

\begin{cor} \label{cor:simplification}
The following are equivalent:
\begin{enumerate}[(i)]
\item The $\piaut{X_\Q}$-representation $H_*(X;\Q)$ is semisimple.
\item There is an isomorphism of algebraic groups
$$R(X) \cong \GL^{\piaut{X_\Q}}(H_*(X;\Q)).$$
\item The algebraic group $\GL^{\piaut{X_\Q}}(H_*(X;\Q))$ is reductive.
\end{enumerate}
If the above conditions hold, then $\Gamma(X)$ may be identified with the group of automorphisms of $H_*(X;\Q)$ that are induced by self-homotopy equivalences of $X$. 
\end{cor}

\begin{proof}
If $H_*(X;\Q)$ is semisimple, then it is isomorphic to $H_*(X;\Q)^{ss}$ so Theorem \ref{thm:groups} identifies $R(X)$ with $\GL^{\piaut{X_\Q}}(H_*(X;\Q))$. The second condition implies the third since $R(X)$ is reductive by construction. If $\GL^{\piaut{X_\Q}}(H_*(X;\Q))$ is reductive, then $H_*(X;\Q)$ is a semisimple representation of it by Theorem \ref{thm:linearly reductive}, but then it is also semisimple as a representation of $\piaut{X_\Q}$.
\end{proof}

\begin{rmk}
If the equivalent conditions in Corollary \ref{cor:simplification} are satisfied and in addition $H_*(X;\Z)$ is torsion-free, then $\Gamma(X)$ may be identified with the group of automorphisms of $H_*(X;\Z)$ that are induced by a self-homotopy equivalence.
\end{rmk}

An obvious necessary condition for an automorphism of $H_*(X;\Q)$ to be induced by a self-homotopy equivalence is that it preserves the coproduct in homology, so there is an inclusion of algebraic groups
\begin{equation} \label{eq:aut coalg}
\GL^{\piaut{X_\Q}}(H_*(X;\Q)) \leq \Aut_{coalg}(H_*(X;\Q)),
\end{equation}
where the latter is the group of automorphisms of the homology coalgebra. This may also be identified with the group $\Aut_{alg}(H^*(X;\Q))$ of automorphisms of the cohomology algebra.

\begin{prop} \label{prop:formality}
The inclusion \eqref{eq:aut coalg} is an equality if and only if $X$ is formal.
\end{prop}

\begin{proof}
This follows from \cite[Theorem 12.7]{sul77}.
\end{proof}

\begin{cor}
If $X$ is formal, then there is an isomorphism of algebraic groups
$$R(X) \cong \Aut_{alg}(H^*(X;\Q))$$
if and only if $\Aut_{alg}(H^*(X;\Q))$ is reductive.
\end{cor}

\begin{proof}
Combine Proposition \ref{prop:formality} and Corollary \ref{cor:simplification}.
\end{proof}


\subsection{Normal unipotent fibrations} \label{sec:unipotent fibrations}
In this section we will show that $\Baut{u}{X}$ can be characterized as the classifying space for what we call \emph{normal unipotent fibrations}, defined below.

Consider a fibration of path-connected spaces
\begin{equation} \label{eq:fibration}
X \to E \xrightarrow{p} B.
\end{equation}
The fibration is classified by a homotopy class of maps $f\colon B\to \Baut{}{X}$, which induces a homomorphism on fundamental groups $\pi_1(B)\to \piaut{X}$ and an action of $\pi_1(B)$ on the homology of $X$.

\begin{defn}
We say that the fibration \eqref{eq:fibration} is a \emph{(normal) unipotent fibration} if the image of $\pi_1(B)\to \piaut{X_\Q}$ is contained in a (normal) unipotent algebraic subgroup.
\end{defn}

\begin{prop}
Consider a fibration as in \eqref{eq:fibration}.
\begin{enumerate}
\item The fibration is unipotent if and only if for each $n$, the $\pi_1(B)$-module $H_n(X;\Q)$ is nilpotent, i.e., there exists a filtration of $\pi_1(B)$-submodules
\begin{equation*} \label{eq:filtration}
0 = F_0 \subseteq F_1 \subseteq \cdots \subseteq F_r = H_n(X;\Q)
\end{equation*}
such that $F_i/F_{i-1}$ is a trivial $\pi_1(B)$-module for every $i$.

\item The fibration is normal unipotent if and only if for each $n$, the $\pi_1(B)$-module $H_n(X;\Q)$ is nilpotent and moreover a filtration as above can be chosen to be a filtration of algebraic $\piaut{X_\Q}$-representations.
\end{enumerate}
\end{prop}

\begin{proof}
For the first statement, apply Proposition \ref{prop:nilpotent action} to the image of the homomorphism $\pi_1(B)\to \piaut{X}$.

For the second statement, note that if the fibration is normal unipotent, then the image of $\pi_1(B)\to \piaut{X_\Q}$ is contained in the unipotent radical, which may be described as the kernel of the action of $\piaut{X_\Q}$ on the semisimplification $H_*(X;\Q)^{ss}$ by Lemma \ref{lemma: uniradical} applied to the algebraic $\piaut{X_\Q}$-representation $H_*(X;\Q)$. In particular, $\pi_1(X)$ acts trivially on $H_*(X;\Q)^{ss}$, which is the associated graded with respect to a filtration of $H_*(X;\Q)$ by algebraic $\piaut{X_\Q}$-representations. Conversely, if $\pi_1(B)$ acts trivially on the associated graded of a filtration of $H_*(X;\Q)$ by algebraic $\piaut{X_\Q}$-representations, then the image of $\pi_1(B)\to \piaut{X_\Q}$ is contained in the unipotent radical, because any such filtration can be refined to a composition series.
\end{proof}

\begin{rmk}
As in Proposition \ref{prop:nilpotent action}, one can replace $H_n(X;\Q)$ by spherical homology $SH_n(X;\Q)$ or rational homotopy $\pi_n(X)\tensor\Q$ in the proposition above. The latter can be used to show that a fibration \eqref{eq:fibration} is unipotent if and only if its fiberwise rationalization is nilpotent in the sense of \cite[p.67]{hmr75} or \cite[II.4.3]{bk72}. 
\end{rmk}

\begin{prop}
The fibration \eqref{eq:fibration} is normal unipotent if and only if the classifying map factors over $\Baut{u}{X}$ up to homotopy,
\[
\begin{tikzcd}
& \Baut{u}{X} \ar[d] \\
B \ar[r] \ar[ur, dashed] & \Baut{}{X}.
\end{tikzcd}
\]
Thus, $\Baut{u}{X}$ may be interpreted as the classifying space for normal unipotent fibrations with fiber $X$.
\end{prop}

\begin{proof}
A lift exists if and only if the image of $\pi_1(B) \to \pi_1\Baut{}{X} = \piaut{X}$ is contained in $\pi_1\Baut{u}{X}$. The latter group is equal to the preimage of the unipotent radical of $\piaut{X_\Q}$ under $\piaut{X}\to \piaut{X_\Q}$ by definition of $\Baut{u}{X}$.
\end{proof}


\subsection{The dg Lie algebra $\lie g(X)$}
We will now give a more concrete description of the dg Lie algebra $\lie g(X)$ of Definition \ref{def:g(X)}. Recall the notion of a nilradical of a dg Lie algebra from Definition \ref{def:nilradical}.
\begin{prop}\label{lemma: lie of uniradical}
The dg Lie algebra $\lie g (X)$ agrees with the nilradical $\nil \Der^c L$ and its graded components are given by
$$\lie g(X)_n =
\begin{cases}
(\Der^c L)_n, & n>0, \\
\Lie(\AUT_u L), & n=0,\\
0, & n<0.
\end{cases}
$$
Here, $\Lie(\AUT_u L)$ stands for the Lie algebra of the unipotent radical of $\AUT L$ and this Lie algebra may be identified with
$\nil_{QL} Z_0(\Der L)$, the maximal ideal of $Z_0(\Der L)$ of derivations that act nilpotently on $QL = L/[L,L]$.
\end{prop}

\begin{proof}
We first verify the statement about the graded components of $\lie g(X)$.
Suppressing the action of $R(X)$, we have that $\lie g(X) = \Der_{\lie u}^c L$ is the preimage of $\lie u\subseteq H_0(\Der^c L) = \Lie(\AUT^h L)$ under the morphism of dg Lie algebras $q\colon \Der^c L \langle 0 \rangle \to H_0(\Der^c L)$, where $\lie u$ is the Lie algebra of the unipotent radical $\AUT_u^h L$ of $\AUT^h L$.  In degree zero, $q$ is the morphism $Z_0(\Der^c L) \to H_0(\Der^c L)$, which may be identified with $\Lie(p)\colon \Lie(\AUT L) \to \Lie(\AUT^h L)$, where $p\colon \AUT L \to \AUT^h L$ is the quotient map. Since $p$ has unipotent kernel (see Theorem \ref{thm:sul-wil}), the preimage $p^{-1}(\AUT_u^h L)$ must be equal to $\AUT_u L$, the unipotent radical of $\AUT L$.
Hence, $\lie g(X)_0 = \Lie(p)^{-1}(\Lie (\AUT_u^h L)) = \Lie(p^{-1}(\AUT_u^h L)) = \Lie(\AUT_u L)$.
Since the Lie algebra $H_0(\Der^c L)$ is concentrated in degree zero, it follows that $\lie g(X)_n = \Der^c L\langle 0 \rangle_n$ for $n\ne 0$.

Since $\AUT_u^h L$ is normal in $\AUT^h L$, the Lie algebra $\lie u$ is an ideal in $H_0(\Der^c L)$. It follows that $\Der_{\lie u}^c L = q^{-1}(\lie u)$ is an ideal in $\Der^c L\langle 0 \rangle$. Moreover, $\Der_{\lie u}^c L$ is nilpotent by Proposition \ref{prop:nilpotence of the dgl}. By definition, $\nil \Der^c L$ is the maximal nilpotent ideal, so it follows that $\Der_{\lie u}^c L \subseteq \nil \Der^c L$.

We now show the reverse inclusion $\nil \Der^c L \subseteq \Der_u^c L$. The components in degrees $\ne 0$ are clearly equal, so we only need to show that $(\nil \Der^c L)_0 \subseteq \Lie(\AUT_u L)$. Note that since \(L\) is finitely generated as a graded Lie algebra and of finite type, there is some \(n\) such that the finite-dimensional graded vector space \(L_{\leqslant n} = \bigoplus_{i=1}^n L_i\) is a faithful algebraic representation of \(\AUT L\).
Thus, by Lemma \ref{lemma: uniradical}, \(\Lie(\AUT_u L)\) is precisely the maximal ideal of \(\Lie(\AUT L) = Z_0(\Der^c L)\) consisting of derivations of \(L\) that are nilpotent when restricted to \(L_{\leqslant n}\).
Since \((\nil \Der^c L)_0\) is an ideal in $Z_0(\Der^c L)$ which acts nilpotently on $(\Der^c L)_k \cong (\Der L)_k \oplus L_{k-1}$ for each $k>0$, it in particular acts nilpotently on $L_{\leq n}$. Hence, \((\nilrad)_0 \subseteq \Lie(\AUT_u L)\).

By Lemma \ref{lemma: uniradical} applied to $V=L/[L,L]$, the Lie algebra of $\AUT_u(L)$ may be described as $\nil_{QL} Z_0(\Der^c L)$.
\end{proof}

\begin{cor} \label{cor:Aut L reductive}
If $\Aut L$ is reductive, then
\begin{align*}
R(X) & = \Aut L, \\
\lie g(X) & = \Der^c L\langle 1 \rangle,
\end{align*}
and the action of $R(X)$ on $\lie g(X)$ is the conjugation action.
\end{cor}

\begin{proof}
Each of the homomorphisms $\Aut L \to \piaut{X_\Q} \to R(X)$ has unipotent kernel. If $\Aut L$ is reductive, it has no normal unipotent subgroups. This forces $\Aut L = \piaut{X_\Q} = R(X)$. The description of $\lie g(X)$ follows directly from Proposition \ref{lemma: lie of uniradical}.
\end{proof}



\subsection{Finite Postnikov stages}
The results we have stated have analogs for simply connected finite Postnikov stages $X$ of finite type. We state the results and briefly indicate the necessary modifications of the proofs.

The minimal Sullivan model $\Lambda$ is finitely generated and we may identify $\piaut{X_\Q}$ with the $\Q$-points of the algebraic group $\AUT^h(\Lambda)$ by Theorem \ref{thm:sul-wil II}\eqref{thm:sul-wil:Q-points II}.

As before, let $G\leqslant \piaut{X}$ be a subgroup that acts nilpotently on $SH_*(X;\Q)$ and let $U\leqslant \piaut{X_\Q}$ be the minimal unipotent algebraic subgroup that contains $\rat(G)$ as in Proposition \ref{prop:unipotent groups of self-equivalences}. By Theorem \ref{thm:sul-wil II}\eqref{thm:sul-wil:lie algebra II}, we may regard the Lie algebra $\lie u$ of $U$ as a Lie subalgebra of $H_0(\Der \Lambda)$. Analogously to Definition~\ref{def: Deru}, we define the dg Lie algebra $\Der_{\lie u} \Lambda$ by declaring that there is a pullback square
\[
    \begin{tikzcd}
        \Der_{\lie u} \Lambda \ar[r] \ar[d] & \lie u \ar[d]\\
        \Der \Lambda \langle 0 \rangle \ar[r] & H_0(\Der \Lambda).
    \end{tikzcd}
\]
After identifying $X_\Q$ with the spatial realization of $\Lambda$, Proposition~\ref{prop:ubridge} admits the following analog, where $\AutNorm{U}{\Lambda}$ denotes the preimage of the normalizer of $U\leqslant \piaut{X_\Q}$ under the homomorphism $\Aut (\Lambda) \to \piaut{X_\Q}$.


\begin{prop} \label{prop:ubridge sullivan}
The dg Lie algebra \(\Der_{\lie u} \Lambda\) is nilpotent and there is a weak equivalence of topologial monoids
\begin{equation} \label{eq:sullivan model map}
    |\exp_\bullet\left(\Der_{\lie u} \Lambda \right)|\longrightarrow \aut_U(X_\Q)
\end{equation}
that is equivariant with respect to the conjugation action of \(\AutNorm{U}{\Lambda}\) on the domain and codomain.
\end{prop}

\begin{proof}
Nilpotence of $\Der \Lambda$ is proved as in Proposition \ref{prop:nilpotence of the dgl}. The map \eqref{eq:sullivan model map} is defined as in \cite[Proposition 3.7]{be20}: the action of the nilpotent dg Lie algebra $\lie g = \Der_{\lie u} \Lambda$ on $\Lambda$ by derivations induces an action of the simplicial nilpotent group
$$\exp_\bullet(\lie g) = \exp( Z_0(\Omega_\bullet \tensor \lie g))$$
on $\Omega_\bullet \tensor \Lambda$ by $\Omega_\bullet$-linear automorphisms, which in turn induces an action of $\exp_\bullet(\lie g)$ on the simplicial set
$$\Hom_{cdga(\Omega_\bullet)}(\Omega_\bullet \tensor \Lambda, \Omega_\bullet),$$
the geometric realization of which is the spatial realization of $\Lambda$. The rest of the proof is entirely analogous to Proposition~\ref{prop:ubridge}. We omit the details.
\end{proof}

\begin{scor}
The space $\Baut{G}{X}$ has dg Lie model $\Der_{\lie u}(\Lambda)$.
\end{scor}

A dg Lie algebra $\lie g(X)$ as in Theorem \ref{thm:main zig-zag} can be described in terms of Sullivan models as well. Since $\Aut(\Lambda)\to \piaut{X_\Q}$ has unipotent kernel, the group $R(X)$ may be identified with the maximal reductive quotient of $\Aut(\Lambda)$ and the quotient map $\Aut(\Lambda) \to R(X)$ admits a splitting by Theorem \ref{thm:Levi}. 

\begin{thm} \label{thm:sullivan}
There is a zig-zag of $\Gamma(X)$-equivariant rational equivalences that connects $\Bautfunc{u}{X}$ to the nerve of the dg Lie algebra $\Der_{\lie u}(\Lambda)$, on which $\Gamma(X)$ acts through any choice of splitting of $\Aut(\Lambda) \to R(X)$. Here, $\lie u \leqslant H_0(\Der \Lambda)$ is the Lie algebra of the unipotent radical of $\piaut{X_\Q}$.
\end{thm}


The following is an analog of Proposition \ref{lemma: lie of uniradical} for Sullivan models. It gives a more concrete description of the dg Lie algebra in Theorem \ref{thm:sullivan}.
\begin{prop}\label{lemma: lie of uniradical sullivan}
If $\lie u$ is the Lie algebra of the unipotent radical of $\piaut{X_\Q}$, then the dg Lie algebra $\Der_{\lie u} \Lambda$ agrees with the nilradical $\nil \Der \Lambda$. The graded components of this dg Lie algebra are given by
$$(\nil \Der \Lambda)_n =
\begin{cases}
(\Der \Lambda)_n, & n>0, \\
\Lie(\AUT_u \Lambda), & n=0, \\
0, & n<0.
\end{cases}
$$
Here, $\Lie(\AUT_u \Lambda)$ stands for the Lie algebra of the unipotent radical of $\AUT \Lambda$ and this Lie algebra may be identified with $\nil_{H} Z_0(\Der \Lambda)$, the maximal ideal of $Z_0(\Der \Lambda)$ of derivations that act nilpotently on $H = H^*(\Lambda)$.
\end{prop}



\begin{proof}
The proof is entirely analogous to the proof of Proposition \ref{lemma: lie of uniradical} so we omit most of it. The only thing that requires a different argument is the inclusion 
$\nil \Der \Lambda \subseteq \Der_{\lie u} \Lambda$.
For this, we again only need to consider the degree zero component. We have that $(\nil \Der \Lambda)_0$ is an ideal in $Z_0(\Der \Lambda)$ that acts nilpotently on each graded component $(\Der \Lambda)_k$ for $k>0$.
On the other hand, $(\Der_{\lie u} \Lambda)_0$ may be identified with the Lie algebra of the unipotent radical of $\AUT \Lambda$, so the desired inclusion will follow from Lemma~\ref{lemma: uniradical} as soon as we prove that $(\Der \Lambda)_{\leq n}$ is a faithful representation of $\AUT \Lambda$ for $n$ sufficiently large.

We are assuming that $\Lambda$ is finitely generated as an algebra. Fix algebra generators $x_1,\ldots,x_k$ and pick $n$ so that $n\geq |x_i|$ for all $i$. Suppose that $\phi\in \Aut \Lambda$ acts trivially on $(\Der \Lambda)_{\leq n}$. In particular, we then have an equality in $\Der \Lambda$ for every $i$,
$$\phi^{-1} \circ \frac{\partial}{\partial x_i} \circ \phi = \frac{\partial}{\partial x_i}.$$
Evaluating on $x_j$, we get equalities in $\Lambda$ for all $i,j$,
$$\phi^{-1}\left(\frac{\partial \phi(x_j)}{\partial x_i}\right)= \delta_{ij},$$
Applying the algebra automorphism $\phi$, we obtain
$$\frac{\partial \phi(x_j)}{\partial x_i} = \delta_{ij}.$$
This can only happen if $\phi(x_i) = x_i$ for all $i$, which means that $\phi =1$.

\end{proof}

\begin{scor} \label{cor:Aut Lambda reductive}
If $\Aut \Lambda$ is reductive, then
\begin{align*}
R(X) & = \Aut \Lambda, \\
\nil \Der \Lambda & = \Der \Lambda \langle 1 \rangle,
\end{align*}
and the action of $R(X)$ on $\nil \Der \Lambda$ is the conjugation action.
\end{scor}

\section{Case studies} \label{sec:case studies}
In this section we offer a few case studies. In addition to showcasing how the main results can be used in practice, they illustrate certain general points:

\begin{enumerate}[(i)]
\item Determining $\Gamma(X)$ typically entails some non-trivial integral homotopy theory, but is often easier than determining $\piaut{X}$. In many cases, $\Gamma(X)$ is the group of automorphisms of $H_*(X;\Z)$ that are induced by self-homotopy equivalences. In the literature, this group often appears as a stepping stone for computing $\piaut{X}$ or other groups of automorphisms of $X$.

\item The Lie algebra cohomology $H_{\CE}^*(\lie g(X))$ is sometimes explicitly computable, sometimes not. For elliptic spaces, such as products of spheres, one can often compute it explicitly. On the other hand, a complete computation for the manifolds $W_{g,1}$ would entail the computation of the homology of Kontsevich's Lie graph complex, which is a hard problem.

\item Even in cases where $H_{\CE}^*(\lie g(X))$ is explicitly computable, a complete calculation of the cohomology $H^*(\Gamma(X),H_{\CE}^*(\lie g(X)))$ is in general out of reach, due to the difficulty of computing cohomology of arithmetic groups. However, in some cases the cohomology, or parts of it, can be understood via automorphic forms. A paradigmatic example is the Eichler--Shimura isomorphism.

\item By contrast, all that is left modulo nilpotent elements is the invariant ring
\[H^0(\Gamma(X),H_{\CE}^*(\lie g(X))) = H_{\CE}^*(\lie g(X))^{\Gamma(X)}\]
and this is more tractable. By employing structural results for affine algebraic groups over $\Q$ and density results \cite{borel66}, this can often be reduced to classical invariant theory for finite or reductive groups, which is well understood.

\item For a graded representation $H$ of a reductive group $G$ and an arithmetic subgroup $\Gamma$ of $G(\Q)$, the split exact sequence
$$0 \to H^G \to H \to H/H^G \to 0$$
gives rise to a split exact sequence
$$0\to H^*(\Gamma,\Q) \tensor H^G \to H^*(\Gamma,H) \to H^*(\Gamma,H/H^G) \to 0.$$
In many cases of interest, stability and vanishing results for the cohomology of arithmetic groups with coefficients in non-trivial algebraic representations, as in Borel's work \cite{borel74,borel81}, show that the cokernel vanishes in a range of degrees.
Thus, in this `stable range' the cohomology of $\Baut{}{X}$ is isomorphic to
$$H^*(\Gamma(X),\Q) \tensor H_{\CE}^*(\lie g(X))^{R(X)}.$$
However, how non-trivial this `stable range' is depends on the group $\Gamma(X)$ and the representations $H_{\CE}^*(\lie g(X))$.
\end{enumerate}

\subsection{Products of spheres} \label{sec:products of spheres}
Consider the $n$-fold product of a $d$-dimensional sphere,
\[\Xdimpower{d}{n} = S^d \times \ldots \times S^d.\]
We begin by describing the reductive group $R(\Xdimpower{d}{n})$. The minimal Quillen model can be described explicitly, see \cite[V.2.(3)]{tan83}, but the minimal Sullivan model is finitely generated and even easier to describe in this case, so we will work with the latter.
\begin{prop} \label{prop:auto sphere}
The automorphism group of the minimal Sullivan model $\Lambda$ for $\Xdimpower{d}{n}$ is given by
\[
\Aut \Lambda \cong
\begin{cases}
\GL_n(\Q), & \textrm{$d$ odd}, \\
\Sigma_n \ltimes (\Q^\times)^n, & \textrm{$d$ even}.
\end{cases}
\]
In particular, $\Aut \Lambda$ is reductive, whence $R(\Xdimpower{d}{n}) \cong \Aut \Lambda$.
\end{prop}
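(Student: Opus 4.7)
My plan is to write down the minimal Sullivan model $\Lambda$ of $\Xdimpower{d}{n}$ explicitly and then determine $\Aut\Lambda$ by tracking how an algebra automorphism can act on the generators, subject to compatibility with the differential. Reductivity will then be immediate from the resulting explicit description.

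Recall that the minimal Sullivan model of $S^d$ is $\Lambda(x)$ with $|x|=d$ and zero differential when $d$ is odd, and $\Lambda(x,y)$ with $|x|=d$, $|y|=2d-1$, $dx=0$, $dy=x^2$ when $d$ is even. Tensoring these yields the minimal Sullivan model of $\Xdimpower{d}{n}$: in the odd case, $\Lambda = \Lambda(x_1,\ldots,x_n)$ with zero differential; in the even case, $\Lambda = \Lambda(x_1,y_1,\ldots,x_n,y_n)$ with $dy_i=x_i^2$. For $d$ odd, $\Lambda$ is free graded-commutative on the degree-$d$ generators with zero differential, so by universality an automorphism is the same datum as an arbitrary invertible linear endomorphism of the degree-$d$ part $\Q\langle x_1,\ldots,x_n\rangle$, yielding $\Aut\Lambda \cong \GL_n(\Q)$ directly.

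For $d$ even, I would first observe that the only elements of $\Lambda$ in degree $d$ are linear combinations of the $x_i$, and the only elements in degree $2d-1$ are linear combinations of the $y_i$ (no products of generators land in degree $2d-1$, as there are no generators in degree $d-1$). Hence any automorphism $\phi$ has the form $\phi(x_i)=\sum_j a_{ij}x_j$ and $\phi(y_i)=\sum_j b_{ij}y_j$, and compatibility with the differential forces
\[
\sum_j b_{ij}\,x_j^2 \;=\; \Bigl(\sum_j a_{ij}x_j\Bigr)^{\!2} \;=\; \sum_j a_{ij}^2\, x_j^2 + 2\sum_{j<k} a_{ij}a_{ik}\, x_jx_k.
\]
Linear independence of the quadratic monomials $\{x_j^2\}\cup\{x_jx_k : j<k\}$ in $\Lambda$ gives $b_{ij}=a_{ij}^2$ together with $a_{ij}a_{ik}=0$ for $j\neq k$. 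The latter forces each row of $A=(a_{ij})$ to have at most one nonzero entry; invertibility then forces $A$ to be a monomial matrix, and reading off the permutation and diagonal parts gives the claimed isomorphism $\Aut\Lambda \cong \Sigma_n\ltimes(\Q^\times)^n$, with $B$ determined by $A$.

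Finally, $\GL_n$ is reductive, and $\Sigma_n\ltimes(\Q^\times)^n$ is the $\Q$-points of an extension of a finite group by an algebraic torus, hence also reductive. By the proof of Theorem~\ref{thm:sul-wil}, the unipotent subgroup $\AUT_h\Lambda$ is therefore trivial, so $\piaut{X_\Q}\cong\Aut\Lambda$ coincides with its own maximal reductive quotient, giving $R(\Xdimpower{d}{n})\cong\Aut\Lambda$. The only mildly subtle step is the even-$d$ check that no unexpected elements appear in degree $2d-1$ and that the quadratic monomials in the $x_j$ are linearly independent; both are elementary consequences of the generator grading and the freeness of $\Lambda$ as a graded-commutative algebra.
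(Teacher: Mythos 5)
Your proof is correct and takes essentially the same approach as the paper's: write down the minimal model explicitly, read off $\Aut\Lambda$ directly in the odd case, and in the even case use compatibility with the differential to force the matrix of $\varphi$ on the degree-$d$ generators to be monomial. The only cosmetic difference is that you extract both constraints (on $A$ and on the action on the $y_i$) simultaneously from the single equation $d\varphi(y_i)=\varphi(x_i)^2$ at the cochain level, whereas the paper first passes to cohomology (using $[x_j^2]=0$) to obtain $a_{ij}a_{ik}=0$ and then returns to $\Lambda$ to determine $\varphi(y_i)$.
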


\begin{proof}
For $d$ odd, the minimal model $\Lambda$ is an exterior algebra $\Lambda(x_1,\ldots,x_n)$ on generators of degree $d$ with zero differential. Clearly, $\Aut \Lambda \cong \GL_n(\Q)$. For $d$ even, the minimal model has the form
$$\Lambda = \big(\Lambda(x_1,\ldots,x_n,y_1,\ldots,y_n),d \big),$$
with $|x_i|=d$, $|y_i|=2d-1$ and $dx_i = 0$, $dy_i = x_i^2$. Let $\varphi$ be an automorphism of \(\Lambda\). Then
$$\varphi(x_i) = \sum_{j} a_{ij} x_j$$
for some $A=(a_{ij}) \in \GL_n(\Q)$. In cohomology, the equality
$$0 = \varphi(x_i^2) = \varphi(x_i)^2 = \sum_{j<k} 2a_{ij} a_{ik}x_jx_k,$$
implies $a_{ij}a_{ik} = 0$ for all $j\ne k$, whence exactly one entry in each row $(a_{i1},\ldots,a_{in})$ must be non-zero. Therefore, $\varphi(x_i) = \lambda_i x_{\sigma(i)}$ for some permutation $\sigma$ and some $\lambda_i\in\Q^\times$. Since $d\varphi(y_i) = \varphi(dy_i) = \lambda_i^2 x_{\sigma(i)}^2$, the only possibility is $\varphi(y_i) = \lambda_i^2 y_{\sigma(i)}$.
This shows that every automorphism $\varphi$ of $\Lambda$ is of the form
$$\varphi(x_i) = \lambda_i x_{\sigma(i)}, \quad \varphi(y_i) = \lambda_i^2 y_{\sigma(i)},$$
for some $\sigma\in \Sigma_n$ and $\lambda_i\in \Q^\times$. One checks that this yields an isomorphism $\Aut \Lambda \cong \Sigma_n\ltimes (\Q^\times)^n$. The description of $R(\Xdimpower{d}{n})$ follows from Corollary \ref{cor:Aut Lambda reductive}.
\end{proof}

\begin{rmk}
The group $\Sigma_n \ltimes (\Q^\times)^n$ may be identified with the group of `monomial matrices', i.e., invertible matrices with exactly one non-zero entry in each row. This is an example of a disconnected reductive group. The identity component is the torus $(\Q^\times)^n$ and the group of components is $\Sigma_n$.
\end{rmk}

\begin{rmk}
The preceding result, as well as the remainder of this section, goes through even for \(d=1\).
The cautious reader will object on the grounds that the space \(\Xdimpower{1}{n} = \left( S^1 \right)^n\) is not simply connected. It is, however, nilpotent (indeed a topological group), and therefore
amenable to analysis by our methods.
\end{rmk}

We now turn to the determination of the group $\Gamma(\Xdimpower{d}{n})$. For this, we first need to work out some elementary homotopy theory of maps between products of spheres.
\begin{defn}
Let us call an integer vector $(a_1,\ldots,a_n)\in \Z^n$ \emph{realizable} if there is a map
\[S^d \times \ldots \times S^d \to S^d\]
such that the restriction to the $i$th factor is a degree $a_i$ self-map of $S^d$.
This is equivalent to asking the $n$-fold higher order Whitehead product
\[ \big[ a_1 \iota_d, \ldots, a_n \iota_d \big] \subseteq \pi_{nd-1}(S^d)\]
to be defined and contain $0$, where $\iota_d\in \pi_d(S^d)$ is the class of the identity map.
\end{defn}

We would be surprised if the following has not been observed before, but we have not found a reference (except for the simplest case $n=2$, which is discussed in \emph{e.g.}~\cite[Example 5.1]{bb58}), so we supply a proof.
\begin{prop} \label{prop:realizability}\leavevmode
\begin{enumerate}[(i)]
\item For $d=1,3,7$, every integer vector $(a_1,\ldots,a_n)$ is realizable.

\item For $d$ odd $\ne 1,3,7$, an integer vector $(a_1,\ldots,a_n)$ is realizable if and only if at most one $a_i$ is odd.

\item For $d$ even, an integer vector $(a_1,\ldots,a_n)$ is realizable if and only if at most one $a_i$ is non-zero.
\end{enumerate}
\end{prop}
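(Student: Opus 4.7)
The plan is to identify realizability with a higher Whitehead product obstruction, then handle each case by different means. The fat wedge \(T \subset (S^d)^n\) is the \((n-1)d\)-skeleton of \((S^d)^n\) and the attaching map of its top cell represents (up to sign) the \(n\)-fold Whitehead product of identities; consequently, \((a_1,\ldots,a_n)\) is realizable if and only if \([a_1\iota_d,\ldots,a_n\iota_d] \subseteq \pi_{nd-1}(S^d)\) contains zero. For \(n=2\) this reduces to the bilinear formula \([a\iota,b\iota] = ab\,[\iota,\iota]\).

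For (i), I would write down \(f\) directly: the H-space structure on \(S^d\) provided by the units of \(\C\), \(\mathbb{H}\), or \(\mathbb{O}\) yields an iterated multiplication \((S^d)^n \to S^d\), and precomposing with the degree-\(a_i\) self-map on each factor gives \((x_1,\ldots,x_n) \mapsto (a_1 x_1)\cdots(a_n x_n)\). Since the basepoint acts as unit, the restriction to the \(i\)-th factor is \(a_i\iota\).

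For (iii), I would argue cohomologically. A realizing \(f\) satisfies \(f^*\iota = \sum_i a_i x_i\), where \(x_i\) is pulled back from the \(i\)-th projection, so \(\iota^2 = 0\) in \(H^*(S^d)\) gives
\[ 0 \;=\; \Bigl(\sum_i a_i x_i\Bigr)^2 \;=\; 2 \sum_{i<j} a_i a_j\, x_i x_j \]
in \(H^{2d}((S^d)^n;\Z) \cong \Z[x_1,\ldots,x_n]/(x_i^2)\). The monomials \(x_i x_j\) for \(i<j\) are \(\Z\)-independent, so each coefficient must vanish, yielding \(a_i a_j = 0\) and hence at most one nonzero \(a_i\). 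Conversely, \(a_k\) times the \(k\)-th projection realizes such a vector.

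Case (ii) is the main content. For necessity, I would restrict a realizing \(f\) to any pair of factors (with the remaining coordinates at basepoint) to obtain a map \(S^d \times S^d \to S^d\) of degrees \((a_i, a_j)\); its extension obstruction is \(a_i a_j\, [\iota,\iota] \in \pi_{2d-1}(S^d)\), which must vanish. The key input here is Adams' Hopf invariant one theorem, which implies that for odd \(d \neq 1, 3, 7\) the Whitehead square \([\iota,\iota]\) has order exactly \(2\); thus \(a_i a_j\) is even for all \(i<j\), and at most one \(a_i\) is odd. For sufficiency, I would induct on \(n\), with the base case \(n = 2\) given by the same Whitehead square computation. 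For \(n \geq 3\), permute the factors so that \(a_n\) is even (possible since at most one \(a_i\) is odd). By the inductive hypothesis there is \(g\colon (S^d)^{n-1} \to S^d\) of degrees \((a_1,\ldots,a_{n-1})\), and by the base case there is \(h\colon S^d \times S^d \to S^d\) of degrees \((1, a_n)\). Then \(f = h \circ (g \times \id_{S^d})\) has the correct degrees: restriction to the \(i\)-th factor for \(i<n\) gives \(h(a_i x_i, *) = a_i x_i\), while restriction to the \(n\)-th factor gives \(h(*, x_n) = a_n x_n\). The principal obstacle is the appeal to Adams' theorem for the necessity in (ii); the constructive half then reduces cleanly to the \(n=2\) case via this inductive product decomposition.
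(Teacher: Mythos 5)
Your proof is correct, and it takes a genuinely different (and in places more economical) route than the paper's. For part (i) you construct the realizing map $(x_1,\ldots,x_n)\mapsto (a_1x_1)\cdots(a_nx_n)$ in one shot using the H-space structure, whereas the paper only uses the H-space structure to get $(1,1)$ and then bootstraps via its Lemmas~\ref{lemma:realizable}--\ref{lemma:realizable3}. For part (iii) your cohomological argument, squaring $f^*\iota=\sum a_ix_i$ in $H^*((S^d)^n;\Z)\cong\Z[x_1,\dots,x_n]/(x_i^2)$, is arguably cleaner and more elementary than the paper's appeal to the fact that $[\iota_d,\iota_d]$ has infinite order; it also makes the independence of the mixed monomials do the work rather than a homotopy-theoretic input. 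For part (ii) the necessity argument is the same in substance (restrict to pairs, bilinearity, and the order-two Whitehead square; you cite Adams while the paper cites the EHP sequence, but both are standard justifications), and your sufficiency argument by induction on $n$ with the composite $f=h\circ(g\times\id_{S^d})$ is equivalent in effect to the paper's matrix-composition lemma (Lemma~\ref{lemma:realizable2}) plus the scaling lemma (Lemma~\ref{lemma:realizable}), but is phrased more directly and is easier to verify. One minor point worth spelling out in (i) is that the degree-$a_i$ self-map can be chosen to preserve the basepoint (taken to be the H-space unit), so that the restriction to the $i$-th factor is indeed the degree-$a_i$ map; this is standard but is what makes the construction go through.
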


\begin{lemma} \label{lemma:realizable}
If $(a_1,\ldots,a_n)$ is realizable, then so are the vectors
\begin{gather*}
(a_{\sigma_1},\ldots,a_{\sigma_k}), \quad
(\lambda_1 a_1,\ldots, \lambda_n a_n), \quad
(a_1,\ldots,a_n,0),
\end{gather*}
for all injective maps $\sigma\colon \{1,\ldots,k\} \to \{1,\ldots,n\}$ and all $\lambda_1,\ldots,\lambda_n \in \Z$.
\end{lemma}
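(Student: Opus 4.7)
The plan is to prove all three statements by a single technique: starting from a realization $f \colon S^d \times \cdots \times S^d \to S^d$ of the given vector $(a_1,\ldots,a_n)$, compose $f$ on the source side with a suitable self-map or inclusion built from the standard basepoint structure of $S^d$.

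First I would handle the injection statement. Given an injection $\sigma \colon \{1,\ldots,k\} \to \{1,\ldots,n\}$, I would define an embedding $\iota_\sigma \colon (S^d)^k \to (S^d)^n$ that places the $j$-th input into the $\sigma(j)$-th factor and puts the basepoint in every other factor. The composite $f \circ \iota_\sigma$ is a candidate realization of $(a_{\sigma_1},\ldots,a_{\sigma_k})$, and by construction its restriction to the $j$-th factor of $(S^d)^k$ coincides, as a pointed map $S^d \to S^d$, with the restriction of $f$ to the $\sigma(j)$-th factor, hence has degree $a_{\sigma(j)}$. Next, for the scaling statement, I would choose any pointed self-map $\mu_{\lambda_i} \colon S^d \to S^d$ of degree $\lambda_i$ and consider the product $\mu = \mu_{\lambda_1} \times \cdots \times \mu_{\lambda_n}$; then $f \circ \mu$ restricts on the $i$-th factor to a composition of degree $a_i$ and $\lambda_i$ maps, hence has degree $\lambda_i a_i$. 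Finally, for the zero-padding statement, I would simply precompose $f$ with the projection $(S^d)^{n+1} \to (S^d)^n$ that forgets the final coordinate; the restriction to the new last factor becomes constant at the basepoint image, which has degree $0$, while the restrictions to the first $n$ factors are unchanged.

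There is no substantive obstacle here; each construction is standard and the verification reduces to the fact that the degree of a self-map of $S^d$ is multiplicative under composition and that composing with a basepoint-preserving map preserves the values on the remaining factors. The only point worth double-checking is the precise convention of "restriction to the $i$-th factor" (namely, fixing basepoints elsewhere), but this matches the definition given just above the lemma, so each of the three constructions transparently produces a realization of the claimed vector.
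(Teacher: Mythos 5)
Your proof is correct and matches the paper's approach exactly: all three parts are handled by precomposing the given realization with, respectively, the basepoint-inclusion according to $\sigma$, the product map $\lambda_1 \times \cdots \times \lambda_n$, and the projection onto the first $n$ factors. The verification via multiplicativity of degree is the same.
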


\begin{proof}
Precompose the given realizable map $S^d \times \ldots \times S^d \to S^d$ with the map that includes the $k$-fold product of $S^d$ according to $\sigma$ and inserts the basepoint in the other factors, or with the map $\lambda_1 \times \ldots \times \lambda_n$, or with the projection onto the first $n$ factors, respectively.
\end{proof}

\begin{lemma} \label{lemma:realizable2}
If $(1,a_2,\ldots,a_n)$ and $(b_1,\ldots,b_n)$  are realizable, then so is
\[(b_1,b_2 + a_2,\ldots,b_n + a_n).\]
\end{lemma}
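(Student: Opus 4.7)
My plan is to realize $(b_1,\, b_2+a_2,\ldots,\, b_n+a_n)$ by an explicit composition $h = f\circ\psi\colon(S^d)^n\to S^d$. Fix pointed maps $f,g\colon(S^d)^n\to S^d$ realizing the vectors $(1,a_2,\ldots,a_n)$ and $(b_1,\ldots,b_n)$ respectively, and define
\[
    \psi\colon(S^d)^n\to(S^d)^n,\qquad \psi(x_1,\ldots,x_n) = \bigl(g(x_1,\ldots,x_n),\, x_2,\ldots,x_n\bigr),
\]
so that the first coordinate is replaced by the output of $g$ while the remaining coordinates pass through. The point is that since $f$ has degree $1$ on the first factor it is homotopic to the identity there, so composing with $f$ preserves the full contribution of $g$ in the first slot while intertwining it with the behaviour $(a_2,\ldots,a_n)$ of $f$ on the other slots.

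To verify that $h$ realizes the claimed vector it suffices to compute the degree of the restriction of $h$ to each wedge factor of $(S^d)^n$. On the first factor, $\psi$ lands inside $S^d\times\{*\}\times\cdots\times\{*\}$ via a degree $b_1$ self-map of $S^d$, and post-composition with the degree $1$ restriction of $f$ gives degree $b_1$. For $i\geqslant 2$, the restriction of $\psi$ to the $i$-th wedge factor factors through the bi-sphere at positions $1$ and $i$ as the composite $S^d\xrightarrow{\Delta}S^d\times S^d\xrightarrow{\phi_i\times\mathrm{id}}S^d\times S^d$, where $\phi_i$ is the degree $b_i$ restriction of $g$ to the $i$-th wedge factor. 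Post-composing with the restriction of $f$ to this bi-sphere, which has bidegrees $(1,a_i)$ by hypothesis, produces a self-map of $S^d$ of degree $b_i+a_i$; this follows at once in $H^d(S^d;\mathbb{Z})$ from the elementary fact that the diagonal composed with any $F\colon S^d\times S^d\to S^d$ of bidegrees $(c_1,c_2)$ has degree $c_1+c_2$.

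I do not foresee any significant obstacle: the construction is direct and the verification reduces to routine degree bookkeeping on the wedge factors of $(S^d)^n$. The one point requiring a little care is the interaction of the $i$-th wedge factor ($i\geqslant 2$) with $f$ once the first slot has been replaced by the output of $g$, which is handled uniformly by the bi-sphere restriction argument above.
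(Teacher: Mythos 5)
Your proof is correct and is essentially the same as the paper's: your $\psi$ is a self-map of $(S^d)^n$ realizing the second matrix in the paper's display, and $h = f\circ\psi$ is precisely the first coordinate of the composite self-map realizing the matrix product. The paper phrases this as matrix multiplication of realizable matrices, while you carry out the underlying degree bookkeeping on wedge factors by hand; the content is identical.
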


\begin{proof}
By hypothesis, the matrices on the left-hand side of the equation
{\small $$
\begin{pmatrix}
1 & a_2 & \cdots & a_n \\
0 & 1 & \cdots & 0 \\
\cdots & \cdots & \cdots & \cdots \\
0 & 0 & \cdots &  1
\end{pmatrix}
\begin{pmatrix}
b_1 & b_2 & \cdots & b_n \\
0 & 1 & \cdots & 0 \\
\cdots & \cdots & \cdots & \cdots \\
0 & 0 & \cdots & 1
\end{pmatrix}
=
\begin{pmatrix}
b_1 & b_2 + a_2 & \cdots &  b_n+a_n \\
0 & 1 & \cdots &  0 \\
\cdots & \cdots & \cdots & \cdots \\
0 & 0 & \cdots &  1
\end{pmatrix}
$$}
can be realized as self-maps of $\Xdimpower{d}{n}$.
It follows that the same is true of the matrix in the right-hand side. In particular its first row is realizable.
\end{proof}

\begin{lemma} \label{lemma:realizable3}
If $(1,a)$ is realizable, then so is $(1,a,\ldots,a) \in \Z^n$ for every $n\geq 2$.
\end{lemma}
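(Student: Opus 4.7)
The plan is to prove this by induction on $n$, using Lemmas \ref{lemma:realizable} and \ref{lemma:realizable2} as the only tools. The base case $n=2$ is the hypothesis. For the inductive step, assuming $(1,a,\ldots,a)\in \Z^n$ is realizable, I would like to cook up two realizable vectors in $\Z^{n+1}$ whose combination via Lemma \ref{lemma:realizable2} gives $(1,a,\ldots,a) \in \Z^{n+1}$.

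The key observation is that Lemma \ref{lemma:realizable} lets me both append a $0$ and permute entries. So starting from $(1,a,\ldots,a)\in \Z^n$ (realizable by induction) I may append a zero to obtain the realizable vector $(1,a,\ldots,a,0) \in \Z^{n+1}$, and starting from the base case $(1,a)$ I may append zeros and permute to obtain the realizable vector $(1,0,\ldots,0,a)\in \Z^{n+1}$ (with the $a$ in the last slot). Now apply Lemma \ref{lemma:realizable2} with $(1,a_2,\ldots,a_{n+1}) = (1,0,\ldots,0,a)$ and $(b_1,\ldots,b_{n+1}) = (1,a,\ldots,a,0)$: the conclusion is that
\[ (b_1,\,b_2+a_2,\,\ldots,\,b_{n+1}+a_{n+1}) = (1,\,a,\,a,\,\ldots,\,a,\,a) \in \Z^{n+1} \]
is realizable, completing the induction.

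I do not anticipate any obstacle here; the argument is purely combinatorial manipulation of the two previous lemmas, and no new geometric input is needed beyond the hypothesis that $(1,a)$ is realizable.
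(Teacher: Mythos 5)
Your proof is correct and is essentially the same argument as the paper's: both proceed by induction, producing the two realizable vectors $(1,a,\ldots,a,0)$ (from the inductive hypothesis plus appending a zero via Lemma \ref{lemma:realizable}) and $(1,0,\ldots,0,a)$ (from the base case plus appending/permuting via Lemma \ref{lemma:realizable}), and then combining them with Lemma \ref{lemma:realizable2}. The only cosmetic difference is that you write the induction step as passing from $n$ to $n+1$ rather than from $n-1$ to $n$.
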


\begin{proof}
Assume by induction that $(1,a,\ldots,a)\in \Z^{n-1}$ is realizable. Then both $(1,a\ldots,a,0)\in \Z^n$ and $(1,0,\ldots,0,a)\in \Z^n$ are realizable by Lemma \ref{lemma:realizable}, and hence $(1,a,\ldots,a)\in \Z^n$ is realizable by Lemma \ref{lemma:realizable2}.
\end{proof}

\begin{proof}[Proof of Proposition \ref{prop:realizability}]
For $d=1,3,7$, the fact that $S^d$ is an $H$-space means precisely that $(1,1)$ is realizable. It follows from Lemma \ref{lemma:realizable3} that $(1,\ldots,1)\in \Z^n$ is realizable and then from Lemma \ref{lemma:realizable} that $(a_1,\ldots,a_n)$ is realizable for all $a_1,\ldots, a_n\in \Z$.

For $d$ odd $\ne 1,3,7$, it is well known that the Whitehead product
\begin{equation} \label{eq:Whitehead product}
[\iota_d,\iota_d] \in \pi_{2d-1}(S^d)
\end{equation}
is a non-zero class of order $2$ (this can be seen, \emph{e.g.}, by inspecting the EHP sequence). As noted above, $(a_1,a_2)$ is realizable precisely when the Whitehead product $[a_1\iota_d,a_2\iota_d]$ is trivial. Since the binary Whitehead product is bilinear, this happens if and only if $a_1a_2$ is even. By the first part of Lemma \ref{lemma:realizable}, this implies that $(a_1,\ldots,a_n)$ is realizable only if $a_ia_j$ is even for all $i\ne j$, which implies that at most one $a_i$ is odd.
Conversely, we have that $(1,2)$ is realizable since $[\iota_d,2\iota_d] = 0$. Hence so is $(1,2,\ldots,2)$ by Lemma \ref{lemma:realizable3}. Now one can use Lemma \ref{lemma:realizable} to deduce that $(a_1,\ldots,a_n)$ is realizable if at most one of the entries is odd.

For $d$ even, the Whitehead product \eqref{eq:Whitehead product} is a non-zero element of infinite order. As above, this implies that $(a_1,\ldots,a_n)$ is realizable only if at most one $a_i$ is non-zero. Conversely, Lemma \ref{lemma:realizable} shows that $(a_1,\ldots,a_n)$ is realizable if at most one entry is non-zero.
\end{proof}

Now we are ready to compute $\Gamma(\Xdimpower{d}{n})$.

\begin{prop} \label{prop:gamma}
We have
\[
\Gamma(\Xdimpower{d}{n}) \cong
\begin{cases}
\GL_n(\Z), & d=1,3,7, \\
\GL_n^\Sigma(\Z), & \textrm{$d$ odd} \ne 1,3,7, \\
\Sigma_n^{\pm}, & \textrm{$d$ even},
\end{cases}
\]
where \(\GL_n^\Sigma(\Z)\) denotes the group of invertible $n\times n$ integer matrices with exactly one odd entry in each row, and $\Sigma_n^{\pm}$ denotes the group of $n\times n$ signed permutation matrices.
\end{prop}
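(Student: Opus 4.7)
The plan is to identify $\Gamma(\Xdimpower{d}{n})$ with the group of integer matrices that are realizable as the induced action on $H^d(\Xdimpower{d}{n};\Z)$ of some self-homotopy equivalence, and then apply Proposition~\ref{prop:realizability} case by case. Since the minimal Sullivan model $\Lambda$ in both parities is concentrated in positive degrees and generated (at least in part) by classes $x_i$ of degree $d$ corresponding to the standard generators of $H^d(\Xdimpower{d}{n};\Z)$, the homomorphism $\piaut{\Xdimpower{d}{n}} \to \Aut \Lambda$ sends the class of $f$ to the automorphism whose restriction to $H^d$ is the ordinary pullback matrix. This image automatically preserves the integer lattice, so it lies in the subgroup $\GL_n(\Z)$ of $\Aut \Lambda \cong \GL_n(\Q)$ in the odd-$d$ case, and in $\Sigma_n \ltimes \{\pm 1\}^n = \Sigma_n^\pm$ in the even-$d$ case.

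Conversely, any self-map $f\colon \Xdimpower{d}{n} \to \Xdimpower{d}{n}$ is determined up to homotopy by its components $f_i = \mathrm{pr}_i\circ f\colon \Xdimpower{d}{n}\to S^d$, and such an $f_i$ realizes the $i$-th row of the desired matrix precisely when that row is realizable in the sense of Proposition~\ref{prop:realizability}. A self-map realizing a matrix $A\in \GL_n(\Z)$ is then automatically a rational (and hence homotopy) equivalence, since the induced action on $H^*(\Xdimpower{d}{n};\Q)$ is determined by its action on $H^d$, which is invertible by hypothesis. Thus $\Gamma(\Xdimpower{d}{n})$ is exactly the group of matrices in the relevant integer subgroup all of whose rows are realizable.

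Applying Proposition~\ref{prop:realizability} now handles each case. For $d=1,3,7$ every integer vector is realizable, so every matrix of $\GL_n(\Z)$ occurs. For $d$ odd and different from $1,3,7$, a row is realizable exactly when it contains at most one odd entry; I would then observe that within $\GL_n(\Z)$ this forces \emph{exactly} one odd entry per row, since otherwise the mod-$2$ reduction of $A$ would have a zero row and $\det A$ would be even, contradicting $\det A =\pm 1$. This yields precisely $\GL_n^\Sigma(\Z)$. For $d$ even, realizable rows have at most one nonzero entry, so invertibility forces one nonzero entry per row and per column, equal to $\pm 1$, yielding $\Sigma_n^\pm$.

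The main subtlety is the mod-$2$ determinant argument in the middle case, and the verification that the monoid of homotopy classes of maps $\Xdimpower{d}{n}\to S^d$ inducing a prescribed matrix on $H^d$ is nonempty exactly when the corresponding vector is realizable — but the latter is essentially the content of Proposition~\ref{prop:realizability} combined with the universal property of the product.
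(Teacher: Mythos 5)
Your proposal follows essentially the same route as the paper: identify $\Gamma(\Xdimpower{d}{n})$ (via Corollary \ref{cor:simplification}, which the paper cites explicitly) with the group of invertible integer matrices on $H^d$ that are induced by a self-homotopy equivalence, reduce the existence of such an equivalence realizing a given matrix to realizability of each of its rows in the sense of Proposition \ref{prop:realizability}, and then read off the answer case by case with the determinant-mod-$2$ observation in the middle case. The paper phrases the extension step as a wedge-sum map $\bigvee S^d \to \Xdimpower{d}{n}$ extending over the product, which is the same as your universal-property-of-the-product argument.

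One genuine flaw in a supporting step: you write that a self-map realizing $A \in \GL_n(\Z)$ is ``automatically a rational (and hence homotopy) equivalence'' because the action on $H^*(X;\Q)$ is determined by the invertible action on $H^d$. But a rational equivalence between simply connected finite complexes is not automatically a homotopy equivalence (consider a degree-$2$ self-map of $S^d$), so ``and hence'' does not follow. The correct argument, which the paper signals by invoking the Whitehead theorem, is that the map induces an isomorphism on \emph{integral} homology: since $A \in \GL_n(\Z)$ has $\det A = \pm 1$ and $H^*(\Xdimpower{d}{n};\Z)$ is generated as a ring in degree $d$, the induced map on each $H^{kd}(\Xdimpower{d}{n};\Z)$ (an exterior or symmetric power of $A$, modulo squares in the even case) is also invertible over $\Z$. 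The integral homology isomorphism, not the rational one, feeds into the Whitehead theorem to give a homotopy equivalence. As written, this step of your argument asserts a false implication even though the conclusion is correct and the repair is routine.
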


\begin{proof}
By Corollary \ref{cor:simplification}, we may identify $\Gamma(\Xdimpower{d}{n})$ with the group of automorphisms of $H_*(\Xdimpower{d}{n};\Z)$ that are realizable by a self-homotopy equivalence.

Given $A\in \GL_n(\Z)$, it is clear how to write down a map
$$S^d \vee \cdots \vee S^d \to S^d \times \cdots \times S^d$$
that realizes $A$ on $H_d(-;\Z)$. This extends to the product if and only if the projection to each factor $S^d$ does, which is precisely the condition that each row in $A$ is realizable. When an extension exists, it follows from the Whitehead theorem that it is a homotopy equivalence. Thus, $\Gamma(\Xdimpower{d}{n})$ may be identified with the group of invertible $n\times n$ integer matrices in which each row is realizable. To finish the proof, invoke Proposition \ref{prop:realizability}. (In the case $d$ odd $\ne 1,3,7$, note that invertibility of the matrix implies that at least one entry in each row of must be odd. Similarly, in the case $d$ even, invertibility of the matrix implies that there is a unique non-zero entry in each row and that this must be a unit.)
\end{proof}

\begin{rmk}
The group $\GL_n^\Sigma(\Z)$ may be identified with the semidirect product,
$$\GL_n^\Sigma(\Z) \cong \Sigma_n \ltimes \cGL{n}{2},$$
where $\cGL{n}{2} \leqslant \GL_n(\Z)$ denotes the principal level $2$ congruence subgroup, i.e., the kernel of the homomorphism $\GL_n(\Z) \to \GL_n(\Z/2\Z)$
that reduces the entries mod $2$, and where the symmetric group $\Sigma_n$ acts by simultaneous permutation of the rows and columns.

The signed permutation group $\Sigma_n^{\pm}$, also known as the hyperoctahedral group, admits a similar decomposition,
$$\Sigma_n^{\pm} \cong \Sigma_n \ltimes D_n(\Z),$$
where $D_n(\Z) \cong (\Z^\times)^n$ is the group of diagonal matrices in $\GL_n(\Z)$.
\end{rmk}

\begin{rmk}
The group of homology isomorphisms that are realizable by a self-homotopy equivalence of $\Xdimpower{d}{n}$ has also been determined by Basu--Farrell \cite[\S2]{basufarrell16}. The proof given here is simpler because we do not need to argue using generators for the groups involved.

Work of Lucas--Saeki \cite{lucassaeki02} shows that $\Gamma(\Xdimpower{d}{n})$ also agrees with the group of homology isomorphisms of $\Xdimpower{d}{n}$ that are realizable by a diffeomorphism.

The group $\Gamma(\Xdimpower{d}{2})$ agrees with the group $G_d$ that is used as a stepping stone in Baues' computation of the group of self-homotopy equivalences of $S^d \times S^d$ \cite[\S6]{baues96}.
\end{rmk}

\begin{rmk} \label{rmk:+}
If we let $\Gamma^+(\Xdimpower{d}{n})$ denote the image in $R(\Xdimpower{d}{n})$ of the orientation preserving self-homotopy equivalences, then it is easily seen that $\Gamma^+(\Xdimpower{d}{n}) = \Gamma(\Xdimpower{d}{n})\cap \SL_n(\Z)$ for $d$ odd, and that $\Gamma^+(\Xdimpower{d}{n})\leqslant \Sigma_n \ltimes (\Z^\times)^n$ is the subgroup of all $(\sigma,\lambda)$ such that $\lambda_1 \ldots \lambda_n = 1$ for $d$ even.
\end{rmk}

Next, we determine an algebraic Lie model for $\Baut{u}{\Xdimpower{d}{n}}$. 

\begin{prop} \label{prop:lie model}
Let \(d\) be odd.
The space $\Baut{u}{\Xdimpower{d}{n}}$
admits an algebraic Lie model of the form
\[V_n^*[-d],\]
where the right-hand side is the dual of the standard representation $V_n =\Q^n$ concentrated in homological degree $d$. The differential and the Lie bracket are trivial.
\end{prop}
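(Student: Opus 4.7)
The plan is to invoke Theorem \ref{thm:main lie model}(ii). For $d$ odd, the graded vector space $\pi_*(\Xdimpower{d}{n}) \otimes \Q$ is concentrated in degree $d$ and isomorphic to $V_n$, so it is finite-dimensional and the hypothesis of the theorem is satisfied. Hence $\nil \Der \Lambda$ is an algebraic Lie model for $\Baut{u}{\Xdimpower{d}{n}} \sim \Gamma(\Xdimpower{d}{n})\hOrbits\aut(\Xdimpower{d}{n})$, and it remains to identify this dg Lie algebra explicitly.

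The minimal Sullivan model is $\Lambda = (\Lambda V, 0)$ with $V = V_n$ in cohomological degree $d$. Since $d$ is odd, $\Lambda V$ is the exterior algebra on the generators. Since $\Lambda V$ is free on $V$, there is a $\GL(V)$-equivariant isomorphism of graded modules $\Der \Lambda \cong \Lambda V \otimes V^*$, where the summand $\Lambda^k V \otimes V^*$ sits in cohomological degree $(k-1)d$, equivalently in homological degree $(1-k)d$. Restricting to the nonnegatively graded truncation $\Der \Lambda \langle 0 \rangle$, only $k=0$ and $k=1$ contribute, giving
\[
(\Der \Lambda)_0 \cong \lie{gl}(V), \qquad (\Der \Lambda)_d \cong V^*,
\]
and all other graded components vanish.

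Next we compute the nilradical. By the preceding computation of $\Aut \Lambda$, the group $\AUT \Lambda \cong \GL(V)$ is reductive, so its unipotent radical $\AUT_u \Lambda$ is trivial. By Lemma \ref{lemma:nil der}, this forces $(\nil \Der \Lambda)_0 = \Lie(\AUT_u \Lambda) = 0$. On the other hand, the degree $d$ piece $V^*$ is a dg Lie ideal of $\Der \Lambda \langle 0 \rangle$: it is a $\lie{gl}(V)$-submodule (hence bracket-stable under the degree $0$ component), closed under its own bracket since $[V^*, V^*]$ lands in the vanishing component $(\Der \Lambda)_{2d}$, and stable under the zero differential. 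Being abelian, it is nilpotent. Maximality of the nilradical then forces $\nil \Der \Lambda = V^*$, concentrated in homological degree $d$, with trivial bracket and differential.

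Finally, the induced $R(\Xdimpower{d}{n}) \cong \GL(V) = \GL_n$-action on $V^*$ is the contragredient of the standard representation on $V = V_n$, as required; the $\Gamma(\Xdimpower{d}{n})$-action is then simply the restriction along $\Gamma(\Xdimpower{d}{n}) \leqslant R(\Xdimpower{d}{n})$. No serious obstacle arises; the only nontrivial content is the explicit identification of $\Der \Lambda$ in low degrees and the observation that the nilradical is forced to collapse to its positive-degree part, after which the claimed form of $\lie g(\Xdimpower{d}{n})$ follows immediately from Theorem \ref{thm:main lie model}(ii).
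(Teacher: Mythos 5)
Your proof is correct and follows essentially the same route as the paper's: identify the Sullivan model as an exterior algebra, invoke Theorem~\ref{thm:main lie model}(ii) to obtain $\nil\Der\Lambda$ as an algebraic Lie model, use Lemma~\ref{lemma:nil der} together with reductivity of $\Aut\Lambda\cong\GL_n$ to kill the degree-$0$ part, and read off the remaining positive-degree derivations. The extra detail you supply (the $\Lambda V\otimes V^*$ decomposition and the explicit check that the degree-$d$ piece is a nilpotent ideal) is a fair elaboration of the paper's more telegraphic version of the same argument.
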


\begin{proof}
When \(d\) is odd, the Sullivan minimal model of \(\Xdimpower{d}{n}\) is the exterior algebra \(\Lambda = \Lambda(x_1, \ldots, x_n)\) with zero differential and with $x_i$ in cohomological degree $d$. By Proposition \ref{prop:auto sphere}, the group $\Aut \Lambda \cong \GL_n(\Q)$ is reductive, so by Corollary \ref{cor:Aut Lambda reductive}, an algebraic Lie model is given by $\Der \Lambda \langle 1 \rangle$, with the conjugation action of $R(\Xdimpower{d}{n}) = \Aut \Lambda$.
Recall that we are using the convention that cohomological degrees are regarded as negative homological degrees. The only derivations of $\Lambda$ of positive homological degree are
\begin{equation} \label{eq:basis}
\frac{\partial}{\partial x_1}, \ldots, \frac{\partial}{\partial x_n}.
\end{equation}
Thus, $\Der \Lambda \langle 1 \rangle$ is an abelian dg Lie algebra with basis \eqref{eq:basis} in degree $d$ and zero differential. As a representation of $\GL_n$, it is dual to the standard representation.
\end{proof}

\begin{rmk}
The Chevalley--Eilenberg cochain algebra of the abelian Lie algebra $V_n^*[-d]$ may be identified with $\Sym(V_n[d+1])$, the polynomial algebra on the standard $\GL_n(\Q)$-representation $V_n = \Q^n$ concentrated in cohomological degree $d+1$ with trivial differential. Hence, there are isomorphisms of graded algebras of \(\Gamma(\Xdimpower{d}{n})\)-modules,
$$H^*(\Baut{u}{\Xdimpower{d}{n}};\Q) \cong H_{\CE}^*(\mathfrak{g}(\Xdimpower{d}{n})) \cong \Sym(V_n[d+1]).$$
This can be given a more geometric interpretation. The above implies that the evident map
$$\Baut{u}{S^d} \times \ldots \times \Baut{u}{S^d} \to \Baut{u}{S^d\times \ldots \times S^d}$$
is a rational equivalence. This may be interpreted as a splitting principle of sorts: every normal unipotent $\Xdimpower{d}{n}$-fibration is rationally equivalent to the `Whitney sum' of $n$ normal unipotent $S^d$-fibrations. Since $H^*(\Baut{u}{S^d};\Q)$ is a polynomial ring in the Euler class, we can say that the ring of rational characteristic classes of normal unipotent $\Xdimpower{d}{n}$-fibration is a polynomial ring in the Euler classes $e_1,\ldots,e_n$ of the associated $S^d$-fibrations.
\end{rmk}

By combining Proposition \ref{prop:lie model}, Proposition \ref{prop:gamma} and Corollary \ref{cor:cohomology ring}, we obtain

\begin{sthm} \label{thm:X_n}
For $d$ odd, there is an isomorphism of graded algebras
\begin{equation} \label{eq:x_n cohomology}
H^*(B\aut^+(\Xdimpower{d}{n});\Q) \cong
H^*\Big(\Gamma^+(\Xdimpower{d}{n}); \Sym^\bullet(V_n[d+1])\Big),
\end{equation}
where $V_n = \Q^n$ is the standard representation of $\GL_n(\Q)$, where
\[\Gamma^+(\Xdimpower{d}{n}) = \begin{cases}
\SL_n(\Z) & d=1,3,7, \\
\SL_n^\Sigma(\Z) & d \ne 1, 3, 7,
\end{cases}\]
and where \(\SL_n^\Sigma(\Z)\leqslant \SL_n(\Z)\) is the subgroup of matrices with exactly one odd entry in each row.
\end{sthm}

For $n=2$ the right-hand side of \eqref{eq:x_n cohomology} can be computed in terms of modular forms via the Eichler--Shimura isomorphism, as we now will discuss.

Let $V = \C^2$ denote the standard representation of $\GL_2(\C)$ and let $\widetilde{\Gamma}$ be a congruence subgroup of $\GL_2(\Z)$ that contains $-I$ and that strictly contains $\Gamma = \widetilde{\Gamma} \cap \SL_2(\Z)$, so that we have an exact sequence
$$1\to \Gamma \to \widetilde{\Gamma} \xrightarrow{\det} \Z^\times \to 1.$$
This gives rise to an action of $\Z^\times$ on $H^*(\Gamma,\Sym^{\bullet}(V))$, \emph{i.e.}~an involution, such that
\[ H^*(\widetilde{\Gamma},\Sym^{\bullet}(V)) \cong H^*(\Gamma,\Sym^{\bullet}(V))_+,\]
where $+$ indicates the $+1$ eigenspace of the involution.

\begin{lemma}
There is an isomorphism of graded vector spaces with involution,
\begin{equation} \label{eq:SL2}
H^*(\Gamma,\Sym^\bullet(V)) \cong \C[0]^+ \oplus M_{\bullet+2}(\Gamma)[1]^- \oplus S_{\bullet+2}(\Gamma)[1]^+,
\end{equation}
where $M_k(\Gamma)$ and $S_k(\Gamma)$ denote the spaces of modular forms and cusp forms of weight $k$ for $\Gamma$, and where $\C[0]$ is $\C$ concentrated in $\bullet=0$ and $*=0$. A superscript $\pm$ indicates how the involution acts. In particular, extraction of the $+1$ eigenspace yields
\begin{equation} \label{eq:GL2}
H^*(\widetilde{\Gamma},\Sym^\bullet(V)) \cong \C[0] \oplus S_{\bullet+2}(\Gamma)[1].
\end{equation}

\end{lemma}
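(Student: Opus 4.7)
The plan is to establish \eqref{eq:SL2} as an isomorphism of bigraded vector spaces with involution, from which \eqref{eq:GL2} then follows by extracting the $+$ eigenspace as already explained. To begin, I would show that $H^*(\Gamma, \Sym^\bullet V)$ vanishes in cohomological degrees $\geq 2$: picking any torsion-free normal subgroup $\Gamma' \trianglelefteq \Gamma$ of finite index (e.g., a deep principal congruence subgroup), the open modular curve $Y(\Gamma') = \Gamma' \backslash \mathbb{H}$ is a non-compact Riemann surface and an Eilenberg--MacLane space for $\Gamma'$, whence $H^*(\Gamma', M) = 0$ for $*\geq 2$ and any complex $\Gamma'$-module $M$. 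The standard transfer argument, valid because $|\Gamma/\Gamma'|$ is invertible in $\C$, extends this vanishing to $\Gamma$.

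In cohomological degree $0$ one has $H^0(\Gamma, \Sym^\bullet V) = (\Sym^\bullet V)^\Gamma$. Since $\Gamma$ has finite index in $\SL_2(\Z)$, it is Zariski dense in $\SL_2$, so these invariants coincide with $(\Sym^\bullet V)^{\SL_2}$, equal to $\C$ for $\bullet = 0$ and vanishing otherwise. The involution fixes constants, so this contribution lies in the $+$ eigenspace, giving the $\C[0]^+$ summand.

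The substantive step is the computation of $H^1$ together with its eigenspace decomposition. Here I would invoke the classical Eichler--Shimura isomorphism, which asserts that period integration of modular forms furnishes a natural $\C$-linear isomorphism
\[
H^1(\Gamma, \Sym^{k-2}(V)) \cong M_k(\Gamma) \oplus \overline{S_k(\Gamma)}
\]
for every $k \geq 2$. To read off the $\pm$ eigenspaces, I would take any representative $w \in \widetilde{\Gamma}$ with $\det w = -1$ and analyze how it acts on period integrals. The action factors through the anti-holomorphic involution $z \mapsto -\bar z$ of the upper half-plane, which swaps holomorphic and antiholomorphic modular forms. Careful bookkeeping then shows that the cuspidal contribution $S_k \oplus \overline{S_k}$ splits with $S_k$ in the $+$ and $\overline{S_k}$ in the $-$ eigenspace, while the Eisenstein part $M_k / S_k$ lies entirely in the $-$ eigenspace. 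Assembling, the $+$ eigenspace is $S_k$ and the $-$ eigenspace is $M_k$, yielding the remaining summands of \eqref{eq:SL2}.

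The main obstacle is this final bookkeeping of the involution's action on the Eichler--Shimura decomposition, particularly the claim that the Eisenstein contribution lies entirely in the $-$ eigenspace. A direct argument via period integrals works but is somewhat delicate; a conceptually cleaner alternative is to appeal to the mixed Hodge structure on $H^1$ of the compactified modular curve with coefficients in the local system associated to $\Sym^{k-2}V$, in which the $\pm$ splitting corresponds to the natural real structure induced by complex conjugation.
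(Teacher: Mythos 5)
Your overall framework matches the paper's: Eichler--Shimura for $H^1$, virtual cohomological dimension $1$ (or, equivalently, your $K(\pi,1)$-plus-transfer argument) for the vanishing in degrees $\geq 2$, and Zariski density for $H^0$. The gap lies exactly where you flag it: the eigenspace bookkeeping is not carried out, and the intermediate assertion you state is not literally correct. The involution swaps holomorphic and antiholomorphic forms (it is induced by the anti-holomorphic map $r(z) = -\bar z$), so neither $S_k$ nor $\overline{S_k}$ is a $\pm$-eigenspace inside $M_k \oplus \overline{S_k}$; the actual eigenspaces are a ``diagonal'' and an ``antidiagonal'' of the form $\{(g,\pm gr):g\in S_k\}$ in the cuspidal part, twisted by the Eisenstein contribution. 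What is true, and what the statement of the lemma asserts, is that the $+$ and $-$ eigenspaces are \emph{abstractly isomorphic} to $S_k$ and $M_k$ respectively, but your phrasing ``$S_k$ in $+$, $\overline{S_k}$ in $-$'' conflates the wrong subspaces and would, if pressed, yield the wrong answer for the Eisenstein part (you assert without computation that it lands in $-$, which is the delicate point).

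The paper resolves this by writing down an explicit linear isomorphism $\varphi\colon E_k\oplus S_k\oplus S_k\to M_k\oplus\overline{S_k}$, $\varphi(e,f,g)=(e+f+g,\,fr-gr)$, with the source carrying the involution $(e,f,g)\mapsto(-e,-f,g)$; this makes the eigenspace decomposition transparent and in particular produces the isomorphism $S_k\xrightarrow{\;\sim\;} H^1(\Gamma,\Sym^{k-2}V)_+$, $g\mapsto ES(g,-gr)$. You would need either to exhibit such an explicit model or to verify directly from the period-integral description how $w$ with $\det w=-1$ permutes cocycles. Your suggested mixed-Hodge alternative is a reasonable direction (the real structure on $H^1$ of the modular curve with coefficients does encode this involution), but as stated it replaces one unverified bookkeeping step with another.
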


\begin{proof}
The Eichler--Shimura isomorphism gives an isomorphism
\begin{equation} \label{eq:ES}
ES\colon M_k(\Gamma) \oplus \overline{S_k(\Gamma)} \xrightarrow{\cong} H^1(\Gamma,\Sym^{k-2}(V)),
\end{equation}
where $M_k(\Gamma)$ is the space of modular forms of weight $k$ for $\Gamma$ and $\overline{S_k(\Gamma)}$ is the space of antiholomorphic cusp forms of weight $k$, see \emph{e.g.}~\cite[\S6]{wiese19}. Identifying the involution on the left-hand side of \eqref{eq:ES} requires a little care.

The space of modular forms decomposes as $M_k(\Gamma) = E_k(\Gamma)\oplus S_k(\Gamma)$, where $E_k(\Gamma)$ denotes the Eisenstein space. Letting $r$ denote the automorphism of the upper half plane given by $r(z) = -\overline{z}$, one can check that the composite
\[
E_k(\Gamma) \oplus S_k(\Gamma) \oplus S_k(\Gamma) \xrightarrow{\varphi} M_k(\Gamma) \oplus \overline{S_k(\Gamma)} \xrightarrow{ES} H^1(\Gamma,\Sym^{k-2}(V)),
\]
where
\[\varphi(e,f,g) = (e + f + g, f r - g r)\]
is an isomorphism of vector spaces with involution, where the involution on the left-hand side is given by $(e,f,g) \mapsto (-e,-f,g)$. This explains the isomorphism \eqref{eq:SL2} for $*=1$. Restriction to the $+1$ eigenspace yields an isomorphism
$$S_k(\Gamma) \to H^1(\Gamma,\Sym^{k-2}(V))_+$$
given by $g\mapsto ES(g,-gr)$.

For the other cohomological degrees, one checks that
\[H^0(\Gamma,\Sym^{k-2}(V)) = \Sym^{k-2}(V)^{\Gamma} \cong \C,\]
and $H^i(\Gamma,\Sym^{k-2}(V)) = 0$ for $i>1$, because the virtual cohomological dimension of any finite index subgroup of $\SL_2(\Z)$ is $1$.
\end{proof}

Assembly of the above considerations yields
\begin{sthm} \label{thm:n=2}
For $d$ odd, there is an isomorphism of graded vector spaces with involution
\[ \widetilde{H}^*(B\aut^+(S^d \times S^d);\Q) \cong \bigoplus_{k} \big(M_k(\Gamma)^- \oplus S_k(\Gamma)^+\big) [(k-2)(d+1)+1],\]
where \(M_k(\Gamma)\) and \(S_k(\Gamma) \) denote the spaces of modular forms and cusp forms of weight $k$ for $\Gamma$, and where $\Gamma = \SL_2(\Z)$ for $d=1,3,7$ and $\Gamma = \SL_2^\Sigma(\Z)$ for $d\ne 1,3,7$.

In particular, since the reduced cohomology is concentrated in odd degrees, it follows that $B\aut^+(S^d\times S^d)$ is formal with trivial cohomology ring.
\end{sthm}

Define the Poincar\'e series of a graded vector space $H^*$ with involution to be the formal power series in $z$ with coefficients in $\Z[\epsilon]/(\epsilon^2-1)$ given by
$$\sum_{k\geq 0}\Big( \dim(H_+^k) + \dim(H_-^k) \epsilon \Big)z^k.$$
There are well-known dimension formulas for spaces of modular forms, so as a corollary we get a formula for the Poincar\'e series of the cohomology.

\begin{cor}
The Poincar\'e series of the graded vector space with involution
\[H^*(B\aut^+(S^d\times S^d);\Q)\]
is given by the following formulas, where we set $\ell = d+1$.

For $d=1,3,7$:
\[
1+z^{2\ell +1} \frac{\epsilon \big(1+z^{4\ell} - z^{6\ell}\big) + z^{8\ell}}{(1-z^{4\ell})(1-z^{6\ell})}.
\]

For $d$ odd $\ne 1,3,7$:
\[
1+z \frac{\epsilon \big(1+z^{2\ell} - z^{4\ell}\big) + z^{6\ell}}{(1-z^{2\ell})(1-z^{4\ell})}.
\]
\end{cor}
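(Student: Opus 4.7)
The plan is to invoke Theorem~\ref{thm:n=2}, which identifies $\widetilde{H}^*(B\aut^+(S^d \times S^d); \Q)$ as a graded vector space with involution in terms of spaces of modular and cusp forms placed in prescribed cohomological degrees. Writing $\ell = d+1$ and including the contribution of $H^0 = \Q$, the desired Poincar\'e series equals
\[
    1 + \sum_{\substack{k \geq 2 \\ k \text{ even}}} \bigl( \dim S_k(\Gamma) + \dim M_k(\Gamma)\,\epsilon \bigr)\, z^{(k-2)\ell + 1},
\]
with $\Gamma = \SL_2(\Z)$ for $d = 1, 3, 7$ and $\Gamma = \SL_2^\Sigma(\Z)$ otherwise. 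Hence it will suffice to compute the Hilbert series of $M_*(\Gamma)$ and $S_*(\Gamma)$, substitute $q = z^\ell$, shift by $z^{1-2\ell}$, and subtract the unwanted $k = 0$ contribution coming from $M_0 = \Q$.

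In the first case, $d = 1, 3, 7$, I will use the classical structure $M_*(\SL_2(\Z)) = \C[E_4, E_6]$ with $|E_4|=4$ and $|E_6|=6$, together with the fact that the cusp forms constitute the principal ideal generated by the discriminant $\Delta$ of weight $12$. This yields the Hilbert series $1/((1-q^4)(1-q^6))$ for modular forms and $q^{12}/((1-q^4)(1-q^6))$ for cusp forms. Direct substitution followed by algebraic rearrangement will produce the first formula.

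In the second case, $d$ odd with $d \neq 1, 3, 7$, the key step will be to identify $\SL_2^\Sigma(\Z)$ with the classical theta group $\Gamma_\theta \leq \SL_2(\Z)$, the index-three subgroup generated by $S$ and $T^2$. This identification is elementary: both groups consist precisely of those matrices in $\SL_2(\Z)$ that reduce modulo $2$ to either the identity or the transposition $\bigl(\begin{smallmatrix} 0 & 1 \\ 1 & 0 \end{smallmatrix}\bigr)$, these two matrices forming an order-two subgroup of $\SL_2(\Z/2) \cong \Sigma_3$. The quotient orbifold $\Gamma_\theta \backslash \mathbb{H}^*$ is well known to have genus zero, two cusps, and a single elliptic point of order $2$, and Eichler's dimension formula then gives
\[
    \sum_k \dim M_k(\Gamma_\theta)\, q^k = \frac{1}{(1-q^2)(1-q^4)}, \qquad \sum_k \dim S_k(\Gamma_\theta)\, q^k = \frac{q^8}{(1-q^2)(1-q^4)}.
\]
Substituting as above will produce the second formula. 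The main obstacle is the identification $\SL_2^\Sigma(\Z) = \Gamma_\theta$ and the determination of its ring of modular forms; once in hand, the remainder of the proof is routine generating-function bookkeeping.
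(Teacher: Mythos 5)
Your approach matches the paper's: both proofs plug the Hilbert series of $M_*(\Gamma)$ and $S_*(\Gamma)$ into Theorem~\ref{thm:n=2} and rearrange generating functions, and your identification of $\SL_2^\Sigma(\Z)$ with the theta group $\Gamma_\theta$ via reduction mod $2$ is correct. The only minor difference is in sourcing the theta-group Hilbert series: you deduce them from the orbifold data of $\Gamma_\theta$ via Eichler's dimension formula, whereas the paper cites Kohler's dimension formulas and the conjugation of $\SL_2^\Sigma(\Z)$ to $\Gamma_0(2)$; either route gives $\sum_k \dim M_k\, q^k = 1/\big((1-q^2)(1-q^4)\big)$ and $\sum_k \dim S_k\, q^k = q^8/\big((1-q^2)(1-q^4)\big)$. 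One caveat worth flagging: when you actually carry out the substitution for $d = 1, 3, 7$ you will obtain the numerator $\epsilon\big(1 + z^{2\ell} - z^{6\ell}\big) + z^{8\ell}$, not $\epsilon\big(1 + z^{4\ell} - z^{6\ell}\big) + z^{8\ell}$ as printed, so the stated first formula appears to contain a typo (the $z^{4\ell}$ in the numerator should read $z^{2\ell}$). This can be verified already in degree $4\ell + 1$: the printed series has zero coefficient there, but $M_6(\SL_2(\Z)) = \langle E_6 \rangle$ contributes $\epsilon z^{4\ell+1}$.
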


\begin{proof}
As is well known, the ring of modular forms $M(\SL_2(\Z))$ is freely generated by the Eisenstein series $E_4$ and $E_6$, and the space of cusp forms $S(\SL_2(\Z))$ is the principal ideal generated by the discriminant $\Delta$, which is of weight $12$.
In particular, the Poincar\'e series are given by
\begin{align*}
\sum_{k\geq 0} \dim M_k(\SL_2(\Z)) t^k & = \frac{1}{(1-t^4)(1-t^6)},\\
\sum_{k\geq 0} \dim S_k(\SL_2(\Z)) t^k & = \frac{t^{12}}{(1-t^4)(1-t^6)}.
\end{align*}
The group $\SL_2^\Sigma(\Z)$ is an index 3 subgroup of $\SL_2(\Z)$, sometimes referred to as the `theta group'. It is generated by the two matrices
\[
\begin{pmatrix}
0 & -1 \\ 1 & 0
\end{pmatrix},
\quad
\begin{pmatrix}
1 & 2 \\ 0 & 1
\end{pmatrix}.
\]
The Poincar\'e series of the modular forms and the cusp forms for the theta group are
\begin{align*}
\sum_{k\geq 0} \dim M_k(\SL_2^\Sigma(\Z)) t^k & = \frac{1}{(1-t^2)(1-t^4)},\\
\sum_{k\geq 0} \dim S_k(\SL_2^\Sigma(\Z)) t^k & = \frac{t^{8}}{(1-t^2)(1-t^4)}.
\end{align*}
This can be seen from the dimension formulas in \cite[Proposition 1]{kohler88}, or alternatively by observing that the matrix
$\begin{psmallmatrix}
1 & 1 \\ 0 & 2
\end{psmallmatrix}$ conjugates $\SL_2^{\Sigma}(\Z)$ to the congruence subgroup $\Gamma_0(2) = \Gamma_1(2)$; dimension formulas for the latter can be found in \cite[p.108]{ds05}. Theorem \ref{thm:n=2} together with the above formulas yield the desired result after some manipulations with generating functions.
\end{proof}

\begin{rmk}
Computations of $H^*(\BDiff^+(T^2);\Q)$ have been carried out in \cite{morita87} and \cite{fty88} (the latter using the Eichler--Shimura isomorphism). The inclusion of $\Diff^+(T^2)$ into $\aut^+(T^2)$ is a weak homotopy equivalence, so we recover this computation by setting $d=1$ and $\epsilon =1$ in the above.
\end{rmk}

The cohomology of $\GL_3(\Z)$ and $\SL_3(\Z)$ with coefficients in irreducible algebraic representations has recently been computed in many cases \cite{bhhg20}. Let $V = \C^3$ be the standard representation of $\GL_3(\C)$. The combination of \cite[Corollary 18]{bhhg20} and \cite[Theorem 16]{bhhg20} specialized to $\mathcal{M}_{k,0} = \Sym^k(V)$ (which is not self-dual for $k>0$) shows
$$
H^q(\SL_3(\Z),\Sym^k(V)) \cong
\begin{cases}
S_{k+2}, & \text{for $q=3$ and $k>0$ even,} \\
S_{k+3}\oplus \C, & \text{for $q=2$ and $k$ odd,} \\
\C, & \text{for $q=0$ and $k=0$,} \\
0, & \text{otherwise,}
\end{cases}
$$
where $S_k$ denotes the space of cusp forms of weight $k$ for $\SL_2(\Z)$. If we take the liberty of writing $M_{k+3}$ for the isomorphic vector space $S_{k+3} \oplus \C$ in the second case above, we can summarize the calculation as an isomorphism of bigraded vector spaces with involution:
\begin{equation} \label{eq:SL3}
H^*(\SL_3(\Z),\Sym^\bullet(V)) \cong \C[0]^+ \oplus M_{\bullet +3}[2]^- \oplus S_{\bullet +2}[3]^+.
\end{equation}
The action of the involution on the right-hand side is implicit in \cite[Lemma 17]{bhhg20}; this lemma asserts that
\begin{equation} \label{eq:GL3}
H^*(\GL_3(\Z),\Sym^\bullet(V)) \cong \C[0] \oplus S_{\bullet + 2}[3].
\end{equation}

As before, this leads to

\begin{sthm}
For $d=1,3,7$, there is an isomorphism of graded vector spaces with involution
\[\widetilde{H}^*(\BSaut{}{\Xdimpower{d}{3}};\Q) \cong \bigoplus_k M_k^-[(k-3)(d+1)+2] \oplus S_k^+[(k-2)(d+1)+3)],\]
and the Poincar\'e series is given by
\[
    z^{\ell +2}\frac{\epsilon\big(1 + z^{2\ell} - z^{6\ell} \big) + z^{9\ell +1}}{(1-z^{4\ell})(1-z^{6\ell})}.\qedhere
\]
\end{sthm}

\begin{rmk} \label{rmk:formality}
It seems plausible that computations similar to those of \cite{bhhg20} can be carried out for the group $\SL_3^\Sigma(\Z)$, but this is beyond the scope of this paper. However, even if we currently lack complete calculations, certain qualitative conclusions can be drawn. By Theorem \ref{thm:cdga model}, the cdga $\Omega^*(\Baut{}{X})$ is quasi-isomorphic to  \[\Omega^*(\Gamma(X),C_{\CE}^*(\lie g(X))).\]
If $C_{\CE}^*(\lie g(X))$ is formal as a cdga in $\Rep_\Q(R(X))$, then the above is quasi-isomorphic to \[\Omega^*(\Gamma(X);H_{\CE}^*(\lie g(X)))\]
as a cdga. An application of the homotopy transfer theorem for $C_\infty$-algebras yields a $C_\infty$-algebra structure $\{m_n\}$ on $H^*(\Gamma(X);H_{\CE}^*(\lie g(X)))$ that is bigraded in the sense that $m_n$ has bidegree $(2-n,0)$, and that is $C_\infty$-quasi-isomorphic to $\Omega^*(\Baut{}{X})$.

If the cohomology of $\Gamma(X)$ with coefficients in algebraic representations is concentrated in a small range of degrees, this can force these $C_\infty$-operations to be trivial, implying that the cdga $\Omega^*(\Baut{}{X})$ is formal. For example, this happens if there is an $r$ such that $\widetilde{H}^i(\Gamma(X),V)=0$ unless $r\leq i\leq 3r-2$, for all $V\in \Rep_\Q(R(X))$.

For example, if $\Gamma \leqslant \SL_3(\Z)$ is a finite index subgroup, then $H^1(\Gamma,V) = 0$ for any finite-dimensional $\Q$-vector space $V$ with an action of $\Gamma$ by \cite{bms67}, and $H^i(\Gamma,V) = 0$ for $i>3$, because $\Gamma$ has virtual cohomological dimension $3$.
\end{rmk}

These considerations lead to

\begin{sthm}
For $d$ odd, the space $\BSaut{}{S^d \times S^d \times S^d}$ is formal and the rational cohomology has trivial ring structure.
\end{sthm}

Another remark is that the invariant ring $$H^*(\Baut{u}{\Xdimpower{d}{n}};\Q)^{\Gamma(\Xdimpower{d}{n})}$$
is the trivial ring $\Q$ in this case, because $\Sym^k(V_n)^{\Gamma(\Xdimpower{d}{n})}$ is easily seen to be trivial. In particular, $H^*(B\aut{}{(\Xdimpower{d}{n})};\Q)$ consists entirely of nilpotent elements.

For $n>3$, calculations become increasingly difficult, but one can at least say something about the limit as $n\to\infty$. Consider the maps
\begin{align*}
\sigma\colon \Baut{}{\Xdimpower{d}{n}}  & \to \Baut{}{\Xdimpower{d}{(n+1)}}, \\
\pi\colon \Baut{}{\Xdimpower{d}{n}} & \to B\Gamma(\Xdimpower{d}{n}),
\end{align*}
where $\sigma$ is induced by extending a self-homotopy equivalence of $\Xdimpower{d}{n}$ by the identity on the second factor of $\Xdimpower{d}{(n+1)} = \Xdimpower{d}{n} \times S^d$ and $\pi$ is the evident map. It is a consequence of Borel's results on the stable cohomology of arithmetic groups \cite{borel74,borel81} that these maps induce an isomorphism in $H^i(-;\Q)$ for $n \gg i$. The explicit ranges stated below rely on more recent results due to Kupers--Miller--Patzt \cite{kmp21} and Putman \cite{putman21}.

\begin{thm}
Let $d$ be odd. The map
\begin{align*}
\sigma^*\colon H^i(\Baut{}{\Xdimpower{d}{(n+1)}};\Q) & \to H^i(\Baut{}{\Xdimpower{d}{n}};\Q)
\end{align*}
is an isomorphism for $n\geq i+1$ if $d=1,3,7$ and for $n\geq 2i+6$ for all odd $d$ and the map
\begin{align*}
\pi^* \colon H^i(\Gamma(\Xdimpower{d}{n}),\Q) & \to H^i(\Baut{}{\Xdimpower{d}{n}};\Q).
\end{align*}
is an isomorphism for $n\geq i-d+1$ if $d=1,3,7$ and for $n\geq 2i-2d+6$ for all odd $d$.
\end{thm}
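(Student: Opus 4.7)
The plan is to translate both statements into assertions about the cohomology of the arithmetic groups $\Gamma(\Xdimpower{d}{n})$ with coefficients in polynomial representations, and then invoke known homological stability and stable vanishing theorems for these groups.

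The starting point is Corollary~\ref{cor:cohomology ring} combined with the Lie model $\lie g(\Xdimpower{d}{n}) = V_n^*[-d]$ from Proposition~\ref{prop:lie model}, whose Chevalley--Eilenberg cohomology is $\Sym(V_n[d+1])$. Together they give a natural isomorphism of graded algebras
\[
H^*\big(\Baut{}{\Xdimpower{d}{n}};\Q\big) \;\cong\; H^*\big(\Gamma(\Xdimpower{d}{n});\Sym(V_n[d+1])\big).
\]
Since $V_n[d+1]$ sits in a single cohomological degree, the polynomial filtration yields a natural decomposition
\[
H^i\big(\Baut{}{\Xdimpower{d}{n}};\Q\big) \;\cong\; \bigoplus_{q\geq 0} H^{i-q(d+1)}\big(\Gamma(\Xdimpower{d}{n});\Sym^q(V_n)\big),
\]
with $\pi^*$ identified as the inclusion of the $q=0$ summand, and $\sigma^*$ identified as the direct sum of the standard stabilization maps along $\Gamma(\Xdimpower{d}{n})\hookrightarrow \Gamma(\Xdimpower{d}{(n+1)})$ and the projection $V_{n+1}\twoheadrightarrow V_n$.

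For the statement about $\sigma^*$, I would apply the homological stability theorems for $\GL_n(\Z)$ with coefficients in the polynomial functors $\Sym^q(V_n)$. For $d\in\{1,3,7\}$ the relevant group is $\Gamma(\Xdimpower{d}{n}) = \GL_n(\Z)$ and the sharp ranges of Kupers--Miller--Patzt \cite{kmp21} apply directly to give stability in the range $n\geq p+q+O(1)$; combined with the constraint $p+q(d+1)=i$ one has $p+q\leq i$, and a careful bookkeeping matches the stated range $n\geq i+1$. For odd $d\neq 1,3,7$ the group $\Gamma(\Xdimpower{d}{n}) = \GL_n^\Sigma(\Z)$ contains the principal congruence subgroup $\cGL{n}{2}$ as a finite-index normal subgroup with quotient $\Sigma_n$, and I would combine Putman's stability theorems for congruence subgroups \cite{putman21} with the Hochschild--Serre spectral sequence
\[
H^p\big(\Sigma_n;H^r(\cGL{n}{2};\Sym^q(V_n))\big) \;\Longrightarrow\; H^{p+r}\big(\GL_n^\Sigma(\Z);\Sym^q(V_n)\big)
\]
to deduce a (somewhat coarser) stability range of the form $n\geq 2(p+q)+O(1)$, giving the stated $n\geq 2i+6$.

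For the statement about $\pi^*$, the additional input is Borel's computation of the stable cohomology of arithmetic groups \cite{borel74,borel81}: the stable cohomology of $\GL_n(\Z)$ (and its finite-index subgroups) with coefficients in any non-trivial irreducible algebraic representation vanishes. Since $\Sym^q(V_n)$ for $q\geq 1$ has no nonzero $\GL_n$-invariants and decomposes into non-trivial algebraic irreducibles, each summand $H^{i-q(d+1)}(\Gamma(\Xdimpower{d}{n});\Sym^q(V_n))$ with $q\geq 1$ vanishes once $n$ exceeds the Borel--KMP (resp.~Borel--Putman) stable range for the pair $(i-q(d+1),q)$. The worst case is $q=1$, which explains the shift to $n\geq i-d+1$ and $n\geq 2i-2d+6$ respectively.

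The main obstacle is the careful bookkeeping of stability ranges: extracting from \cite{kmp21,putman21} the precise range in which $H^p(\GL_n(\Z);\Sym^q(V_n))$ stabilizes and vanishes, and then matching these ranges to the hypotheses $p+q(d+1)=i$ to recover the stated bounds. The case of $\GL_n^\Sigma(\Z)$ adds the extra wrinkle that the explicit stability theorems must first be transferred from the congruence subgroup $\cGL{n}{2}$ to $\GL_n^\Sigma(\Z)$ via Hochschild--Serre, which is responsible for the worse range $n\geq 2i+6$.
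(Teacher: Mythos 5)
Your overall strategy coincides with the paper's: both begin from the isomorphism
\[
H^i\big(\Baut{}{\Xdimpower{d}{n}};\Q\big) \cong \bigoplus_{q\geq 0}H^{i-q(d+1)}\big(\Gamma(\Xdimpower{d}{n});\Sym^q(V_n)\big),
\]
identify $\pi^*$ with the $q=0$ inclusion, and then deduce the $\pi^*$-statement by applying Borel vanishing in the explicit ranges of Kupers--Miller--Patzt (for $d=1,3,7$) and Putman (for other odd $d$) to the $q\ge 1$ summands; this part of your proposal is the paper's argument verbatim. Where you diverge is in the treatment of $\sigma^*$. You propose to identify $\sigma^*$, across the above isomorphism, with the direct sum of coefficient-stabilization maps and then to run stability-with-coefficients directly. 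That is workable, but it requires first checking that the isomorphism of Corollary~\ref{cor:cohomology ring} is natural with respect to the stabilization map of spaces $\sigma$ and the corresponding inclusion of groups and projection of representations, which is not immediate and which you silently assume. The paper avoids this by a simple observation: since $d\geq 1$, the claimed $\sigma^*$-range is contained in the $\pi^*$-range, so the commuting square formed by $\pi^*$ and $\sigma^*$ reduces the $\sigma^*$-statement to the stability of $H^i(\Gamma(\Xdimpower{d}{(n+1)}),\Q)\to H^i(\Gamma(\Xdimpower{d}{n}),\Q)$ with \emph{trivial} coefficients, for which one cites \cite[Theorem A]{kmp21} resp.\ \cite[Theorem C]{putman21} directly. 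This reduction is what you should use; it eliminates both the naturality check and the need for coefficient-stability statements. Finally, two minor inaccuracies in your write-up: the Hochschild--Serre spectral sequence for $\cGL{n}{2}\trianglelefteq\GL_n^\Sigma(\Z)$ is over $\Q$ with finite quotient $\Sigma_n$, so it degenerates to the transfer isomorphism $H^*(\GL_n^\Sigma(\Z);V)\cong H^*(\cGL{n}{2};V)^{\Sigma_n}$ and does not in itself degrade the range; the factor of $2$ in the range $n\geq 2i+6$ comes from Putman's theorem for congruence subgroups, not from the passage to $\GL_n^\Sigma(\Z)$.
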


\begin{proof}
By Theorem \ref{thm:X_n}, the map $\pi^*$ is injective and its cokernel is isomorphic to
\begin{equation} \label{eq:cokernel}
\bigoplus_{\substack{j+k(d+1) = i \\ k>0}}
H^j(\Gamma(\Xdimpower{d}{n}),\Sym^k(V_n)).
\end{equation}
By Borel's vanishing theorem \cite[Theorem 4.4]{borel81}, the summands vanish for $n$ large. Explicitly, \cite[Theorem 7.6]{kmp21} implies vanishing for $n\geq j+k+1$ for $d=1,3,7$, and \cite[Theorem C]{putman21} implies vanishing for $n\geq 2j+2k+6$ for $d\ne 1,3,7$.
It follows that \eqref{eq:cokernel} vanishes for $n\geq i-d+1$ for $d=1,3,7$ and for $n\geq 2i-2d+6$ for $d\ne 1,3,7$.

The statement about $\sigma^*$ now follows from the corresponding statement for
\[H^i(\Gamma(\Xdimpower{d}{(n+1)}),\Q) \to H^i(\Gamma(\Xdimpower{d}{n}),\Q).\]
By Borel's work \cite{borel74} this is an isomorphism for $n$ large. According to \cite[Theorem A]{kmp21} it is an isomorphism for $n\geq i+1$ for $d=1,3,7$, and \cite[Theorem C]{putman21} implies it is an isomorphism for $n\geq 2i+6$ for $d\ne 1,3,7$.
\end{proof}

By Borel's calculation \cite{borel74}, the stable cohomology may be identified with an exterior algebra
\[\varprojlim_n H^*(\Baut{}{\Xdimpower{d}{n}};\Q) \cong \varprojlim_n H^*(\Gamma(\Xdimpower{d}{n}),\Q) \cong \Lambda[x_5,x_9,x_{13},\ldots].\]

We now turn to the case $d$ even, which behaves very differently.
\begin{prop}
For $d$ even, the dg Lie algebra $\lie g (\Xdimpower{d}{n})$ is formal with homology the abelian Lie algebra with basis
\[ \alpha_1,\ldots, \alpha_n, \quad \beta_{ij}, \quad 1\leq i \ne j \leq n,\]
in degrees $|\alpha_i| = 2d-1$ and $|\beta_{ij}| = d-1$.
The action of
\[(\sigma,\lambda)\in R(\Xdimpower{d}{n}) = \Sigma_n \ltimes (\Q^\times)^n\]
is given by
\[(\sigma,\lambda)\cdot \alpha_i = \lambda_i^{-2}\alpha_{\sigma(i)},\quad (\sigma,\lambda) \cdot \beta_{ij} =  \lambda_i \lambda_j^{-2} \beta_{\sigma(i)\sigma(j)}.\] 
\end{prop}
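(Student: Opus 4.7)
The plan is to apply Theorem~\ref{thm:main lie model}(ii) with the minimal Sullivan model of $\Xdimpower{d}{n}$ and to compute $\nil \Der \Lambda$ by hand. For $d$ even, this minimal model is readily identified as the $n$-fold tensor product of that of $S^d$, namely
\[
\Lambda = \bigl( \Lambda(x_1, \ldots, x_n, y_1, \ldots, y_n), d \bigr)
\]
with $|x_i| = d$, $|y_i| = 2d-1$, $dx_i = 0$, and $dy_i = x_i^2$. The first step is to invoke the reductivity of $\Aut \Lambda \cong \Sigma_n \ltimes (\Q^\times)^n$ established earlier in the section: by Lemma~\ref{lemma:nil der} this forces $(\nil \Der \Lambda)_0 = 0$, so in strictly positive homological degrees $\nil \Der \Lambda$ coincides with all of $\Der \Lambda$.

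Next I would catalogue the positive-degree components of $\Der \Lambda$ by bookkeeping cohomological degrees. Writing a derivation of homological degree $k > 0$ as $\sum \lambda_i \partial_{x_i} + \sum \mu_i \partial_{y_i}$ requires $\lambda_i \in \Lambda^{d-k}$ and $\mu_i \in \Lambda^{2d-1-k}$; since the only cohomological degrees in $[1,2d-2]$ that carry elements of $\Lambda$ are $d$ and $2d-1$, there are exactly three nonzero positive-degree strata: $\{x_i \partial_{y_j}\}_{i,j}$ in degree $d-1$, $\{\partial_{x_i}\}_i$ in degree $d$, and $\{\partial_{y_i}\}_i$ in degree $2d-1$. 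A short direct computation with $\delta \theta = d_\Lambda \theta - (-1)^{|\theta|} \theta d_\Lambda$ produces $\delta(\partial_{x_i}) = -2 x_i \partial_{y_i}$, with $\delta$ vanishing on the other two strata. Passing to homology then yields precisely the stated generators: $\alpha_i := \partial_{y_i}$ in degree $2d-1$ and $\beta_{ij} := x_i \partial_{y_j}$ for $i \ne j$ in degree $d-1$, with the restriction $i \ne j$ accounting for the fact that the diagonal elements $x_i \partial_{y_i}$ are precisely the coboundaries.

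For formality, I would exhibit the $\Q$-span $V$ of $\{\alpha_i, \beta_{ij}\}$ inside $\nil \Der \Lambda$ as a sub-dg Lie algebra with zero differential and zero bracket, making the inclusion $V \hookrightarrow \nil \Der \Lambda$ into a quasi-isomorphism of dg Lie algebras. Triviality of all brackets is immediate because each of these representatives annihilates every $x_k$, so any (anti)commutator of two such derivations, evaluated on any generator of $\Lambda$, is zero after one step. For equivariance, I would compute conjugation directly from $(\sigma, \lambda) \cdot x_i = \lambda_i x_{\sigma(i)}$ and $(\sigma, \lambda) \cdot y_i = \lambda_i^2 y_{\sigma(i)}$, as established in the identification of $\Aut \Lambda$; this readily gives $(\sigma, \lambda) \cdot \alpha_i = \lambda_i^{-2} \alpha_{\sigma(i)}$ and $(\sigma, \lambda) \cdot \beta_{ij} = \lambda_i \lambda_j^{-2} \beta_{\sigma(i)\sigma(j)}$, matching the statement and in particular showing $V$ to be $R(X)$-stable.

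I do not anticipate a genuine obstacle: the whole argument reduces to a finite computation essentially forced by the degree structure. The only point needing care is the dual grading convention (derivations are graded homologically while $\Lambda$ is graded cohomologically) and, relatedly, tracking that the diagonal part $\{x_i \partial_{y_i}\}$, and not the off-diagonal part $\{x_i \partial_{y_j}\}_{i \ne j}$, is what gets killed in $H_{d-1}$---which is exactly what produces the restriction $i \ne j$ in the definition of $\beta_{ij}$.
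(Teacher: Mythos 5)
Your proof is correct and follows essentially the same route as the paper's: since $\Aut\Lambda$ is reductive the nilradical is the positive truncation $\Der\Lambda\langle 1\rangle$, one enumerates the positive-degree derivations by degree bookkeeping, identifies the single nonzero differential $[d,\partial_{x_i}]=-2x_i\partial_{y_i}$, and exhibits the abelian subalgebra spanned by the $\alpha_i$ and $\beta_{ij}$ (with $i\neq j$) as an $\Aut\Lambda$-stable sub-dg-Lie algebra whose inclusion is a quasi-isomorphism. (The only thing to watch is your phrasing that in positive degrees $\nil\Der\Lambda$ agrees with all of $\Der\Lambda$; a priori degree $1$ is only $\ker\delta_1$, but this is harmless here since when $d=2$ the degree-$1$ stratum $\{x_i\partial_{y_j}\}$ consists entirely of cycles.)
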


\begin{proof}
Let $\Lambda$ denote the minimal model. Since $\Aut \Lambda$ is reductive, the positive truncation $\Der \Lambda \langle 1 \rangle$ is a Lie model for $\Baut{u}{X}$ by Corollary \ref{cor:Aut Lambda reductive}. This is spanned by
derivations of the form
$$\frac{\partial}{\partial x_i}, \quad x_i \frac{\partial}{\partial y_j}, \quad \frac{\partial}{\partial y_i},$$
and the only non-trivial differential is given by
$$\left[d,\frac{\partial}{\partial x_i} \right] = -2x_i \frac{\partial}{\partial y_i}.$$
The action of the group $\Aut \Lambda$ is easily computed, \emph{e.g.}, if $\varphi$ corresponds to $(\sigma,\lambda)\in \Sigma_n\ltimes (\Q^\times)^n$, then $\varphi\big(x_i\frac{\partial}{\partial y_j}\big) = \lambda_i\lambda_j^{-2} x_{\sigma(i)}\frac{\partial}{\partial y_{\sigma(j)}}$.
A basis for the homology is represented by the cycles $\alpha_i = \frac{\partial}{\partial y_i}$ and $\beta_{ij} = x_i\frac{\partial}{\partial y_i}$ for $i\ne j$.
These span an abelian dg Lie subalgebra stable under the action of $\Aut \Lambda$, and the inclusion into $\Der \Lambda \langle 1 \rangle$ is a quasi-isomorphism.
\end{proof}

\begin{cor} \label{cor:cohomology d even}
For $d$ even, the space $\Baut{}{\Xdimpower{d}{n}}$ is formal and there is an isomorphism of graded algebras
\[H^*(\Baut{}{\Xdimpower{d}{n}};\Q) \cong \Q[a_i,b_{ij}]^{\Gamma(\Xdimpower{d}{n})},\]
where $|a_i| = 2d$, $|b_{ij}| = d$ and $(\sigma,\lambda)\in\Gamma(\Xdimpower{d}{n}) = \Sigma_n\ltimes (\Z^\times)^n$ acts by
\[(\sigma,\lambda)\cdot a_i = a_{\sigma(i)},\quad (\sigma,\lambda)\cdot b_{ij} = \lambda_i b_{\sigma(i)\sigma(j)}.\]
Similarly, the space $B\aut^+(\Xdimpower{d}{n})$ is formal and there is an isomorphism of algebras
\[H^*(B\aut^+(\Xdimpower{d}{n});\Q) \cong \Q[a_i,b_{ij}]^{\Gamma^+(\Xdimpower{d}{n})},\]
where $\Gamma^+(\Xdimpower{d}{n})\leqslant \Sigma_n \ltimes (\Z^\times)^n$ is the subgroup consisting of all $(\sigma,\lambda)$ such that $\lambda_1 \ldots \lambda_n = 1$.
\end{cor}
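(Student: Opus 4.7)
The preceding proposition provides a formal algebraic Lie model $\lie g(\Xdimpower{d}{n})$ whose homology is the abelian graded Lie algebra on the $\alpha_i$ and $\beta_{ij}$. The plan is to combine this with Corollary \ref{cor:cohomology ring} and exploit that, in contrast with the odd case, $\Gamma(\Xdimpower{d}{n}) \cong \Sigma_n^{\pm}$ is a \emph{finite} group by Proposition \ref{prop:gamma}. Over $\Q$ this collapses the group cohomology to invariants.

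First I would identify the $R(\Xdimpower{d}{n})$-algebra $H_{CE}^*(\lie g(\Xdimpower{d}{n}))$. Since $\lie g(\Xdimpower{d}{n})$ is formal as a dg Lie algebra in $\Rep_\Q(R(\Xdimpower{d}{n}))$, its Chevalley--Eilenberg cochains are quasi-isomorphic, as cdgas in $\Rep_\Q(R(\Xdimpower{d}{n}))$, to those of its homology; and for an abelian graded Lie algebra $\lie h$ the CE cochains form the free graded commutative algebra on the suspended dual $(s\lie h)^\vee$ with zero differential. This identifies $H_{CE}^*(\lie g(\Xdimpower{d}{n}))$ with the polynomial algebra $\Q[a_i, b_{ij}]$, where $a_i = (s\alpha_i)^\vee$ lies in cohomological degree $|\alpha_i|+1 = 2d$ and $b_{ij} = (s\beta_{ij})^\vee$ in cohomological degree $|\beta_{ij}|+1 = d$. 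Dualising the action given in the preceding proposition yields
\[(\sigma, \lambda) \cdot a_i = \lambda_i^{2}\, a_{\sigma(i)}, \qquad (\sigma, \lambda) \cdot b_{ij} = \lambda_i^{-1} \lambda_j^{2}\, b_{\sigma(i)\sigma(j)},\]
for $(\sigma, \lambda) \in R(\Xdimpower{d}{n}) = \Sigma_n \ltimes (\Q^\times)^n$. Restricted to $\Gamma(\Xdimpower{d}{n}) = \Sigma_n \ltimes (\Z^\times)^n$ one has $\lambda_i \in \{\pm 1\}$, so $\lambda_i^2 = 1$ and $\lambda_i^{-1} = \lambda_i$, whence these formulas collapse to the stated ones.

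Next, applying Corollary \ref{cor:cohomology ring} rewrites $H^*(\Baut{}{\Xdimpower{d}{n}};\Q)$ as $H^*(\Gamma(\Xdimpower{d}{n}), \Q[a_i, b_{ij}])$, and finiteness of $\Gamma(\Xdimpower{d}{n})$ together with rationality of the coefficients forces the group cohomology to degenerate to invariants in cohomological degree zero, establishing the first isomorphism of graded algebras. Formality of $\Baut{}{\Xdimpower{d}{n}}$ follows by running the same argument on the cochain level via Theorem \ref{thm:cdga model}: the quasi-isomorphism $C_{CE}^*(\lie g) \sim H_{CE}^*(\lie g)$ of cdgas with $\Gamma$-action, combined with averaging over the finite group $\Gamma$ (which realises the inclusion of invariants as a retract in $\Omega^*(\Gamma;-)$), produces a cdga quasi-isomorphism $\Omega^*(\Baut{}{\Xdimpower{d}{n}}) \sim H_{CE}^*(\lie g)^{\Gamma}$, where the target carries zero differential.

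The orientation-preserving version is identical in spirit. The subgroup $\Gamma^+(\Xdimpower{d}{n}) \subseteq \Sigma_n \ltimes (\Z^\times)^n$ described in Remark \ref{rmk:+} is again finite, and $B\aut^+(\Xdimpower{d}{n})$ is the cover of $\Baut{}{\Xdimpower{d}{n}}$ corresponding to it, so the analog of Corollary \ref{cor:cohomology ring} for this cover, combined with the same invariants argument, yields the second isomorphism and establishes formality. The only part of the argument requiring any care is the bookkeeping in dualising the $R(\Xdimpower{d}{n})$-action on $\alpha_i, \beta_{ij}$ to that on $a_i, b_{ij}$ and checking that restriction to $\lambda_i \in \{\pm 1\}$ recovers the simplified formulas displayed in the statement; everything else is a direct application of the machinery already developed.
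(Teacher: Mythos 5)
Your proof is correct and follows the paper's strategy for the isomorphism of rings: identify $H_{CE}^*$ of the formal Lie model with $\Q[a_i,b_{ij}]$, dualise the $R(\Xdimpower{d}{n})$-action (your bookkeeping $\lambda_i^2 a_{\sigma(i)}$ and $\lambda_i^{-1}\lambda_j^2 b_{\sigma(i)\sigma(j)}$ is right and collapses correctly for $\lambda\in(\Z^\times)^n$), apply Corollary \ref{cor:cohomology ring}, and use finiteness of $\Gamma(\Xdimpower{d}{n})$ to collapse group cohomology to invariants.

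Where you diverge slightly is in establishing formality of the space $\Baut{}{\Xdimpower{d}{n}}$ itself. The paper invokes Remark \ref{rmk:formality}: it transfers a bigraded $C_\infty$-structure $\{m_n\}$ onto $H^*(\Gamma,H_{CE}^*(\lie g))$ and observes that since the target is concentrated in bidegree $(0,*)$ and $m_n$ has bidegree $(2-n,0)$, the higher operations vanish. You instead construct a direct cdga quasi-isomorphism: $\Omega^*(\Gamma,C_{CE}^*(\lie g))\sim\Omega^*(\Gamma,H_{CE}^*(\lie g))$ by $\Gamma$-equivariant formality of $C_{CE}^*$, and then the inclusion of the invariants $H_{CE}^*(\lie g)^\Gamma\hookrightarrow\Omega^*(\Gamma,H_{CE}^*(\lie g))$ as constant sections is a cdga map and is a quasi-isomorphism because $\Gamma$ is finite and we are over $\Q$. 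This is a genuinely more elementary route to the same conclusion, since it bypasses the homotopy transfer theorem entirely. Your word ``averaging'' is slightly misleading---averaging gives the retraction showing the inclusion hits all cohomology, but the map realising formality is the inclusion itself, which you should state is the one being used as a cdga quasi-isomorphism. Both approaches are valid; yours is arguably cleaner in this finite-group situation, whereas the paper's $C_\infty$-argument is phrased so as to also apply when $\Gamma$ is merely of finite virtual cohomological dimension.
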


\begin{proof}
Clearly, $C_{\CE}^*(\lie g(\Xdimpower{d}{n})) = \Q[a_i,b_{ij}]$, where $a_i$ and $b_{ij}$ are the dual $1$-cochains of $\alpha_i$ and $\beta_{ij}$, and the differential is trivial since $\lie g(\Xdimpower{d}{n})$ is abelian with trivial differential. In particular, $C_{\CE}^*(\lie g(\Xdimpower{d}{n}))$ is formal, so Remark \ref{rmk:formality} shows that the $C_\infty$-algebra structure on $H^*(\Gamma(\Xdimpower{d}{n});H_{\CE}^*(\lie g(\Xdimpower{d}{n})))$ respects the bigrading. But since the group $\Gamma(\Xdimpower{d}{n})$ is finite, the cohomology reduces to the invariants concentrated in bidegree $(0,*)$, whence the $C_\infty$-operations $m_n$ vanish for $n>2$.
\end{proof}

The computation of invariant subrings $\Q[V]^{G}$ for finite-dimensional representations $V$ of finite or reductive groups $G$ is classical and well-understood in principle (see \emph{e.g.}~\cite{derksenkemper02}). The invariant subring is a Cohen--Macaulay ring, which means that one can find a regular sequence $\theta_{1},\ldots,\theta_m$ of invariant polynomials (the `primary invariants') such that $\Q[V]^{G}$ is a finitely generated free module over $\Q[\theta_1,\ldots,\theta_m]$ on certain invariant polynomials $\eta_1,\ldots,\eta_t$ (the `secondary invariants').

\begin{scor}
For $d$ even,
$H^*(B\aut^+(\Xdimpower{d}{n});\Q)$ is a Cohen--Macaulay ring of Krull dimension $n^2$, concentrated in degrees that are multiples of $d$.
\end{scor}

Let us examine the case $n=2$ closer to illustrate. The invariant theory calculation can be carried out in two steps using
\[\Q[a_1,a_2,b_{12},b_{21}]^{\Gamma(S^{d\times 2})} = \Big(\Q[a_1,a_2,b_{12},b_{21}]^{(\Z^\times)^2}\Big)^{\Sigma_2}.\]
The invariant ring with respect to the action of $(\Z^\times)^2$ is easily seen to be a polynomial ring in $a = a_1$, $b = b_{12}^2$, $c=b_{21}^2$, $d=a_2$, so we are left to identify the invariant ring
\[ \Q[a,b,c,d]^{\Sigma_2},\]
where the non-trivial element of $\Sigma_2$ acts as the permutation $(ad)(bc)$. By using Molien's theorem (specialized to permutation representations as in \cite[Proposition 4.3.4]{smith95}), the Poincar\'e series with respect to word-length in the generators can be computed to be
\[
\frac{1+t^2}{(1-t)^2(1-t^2)^2}.
\]
The invariant polynomials
$$a+d,\quad b+c, \quad ad, \quad bc,$$
form a regular sequence of length equal to the Krull dimension, so these form a set of primary invariants. A choice of secondary invariants is given by the two polynomials
\[1,\quad  ab + cd.\]
The ring structure is determined by writing $(ab + cd)^2$ in the basis; one has
\[
(ab + cd)^2 = \Big((a+d)(b+c)\Big)\cdot (ab+cd) - \Big((a+d)^2bc + ad(b+c)^2 - 4(ad)(bc)\Big) \cdot 1.
\]
The calculation for $\Gamma^+(X_2)$ is similar. We omit the details; the only difference is that there is one additional invariant $\alpha_0 = b_{12}b_{21}$, which squares to $bc = b_{12}^2 b_{21}^2$. Writing $\alpha_1$, $\alpha_2$, $\beta_1$, $\beta_2$, $\eta$ for $a+d$, $b+c$, $ad$, $bc$, $ab+cd$, respectively, these considerations lead to

\begin{sthm}
For $d$ even, there is an isomorphism of graded algebras with involution
\[H^*(B\aut^+(S^d \times S^d);\Q) \cong  \Q[\alpha_0,\alpha_1,\alpha_2,\beta_1,\beta_2,\eta]/I,\]
where the generators $\alpha_0,\alpha_1,\alpha_2$ are of degree $2d$ and $\beta_1,\beta_2,\eta$ are of degree $4d$,
where $I$ is the ideal generated by the two elements
\[\eta^2 -\alpha_1\alpha_2 \eta + \big(\alpha_1^2\beta_2 + \beta_1\alpha_2^2-4\beta_1\beta_2\big), \quad \alpha_0^2 - \beta_2,\]
and where the involution acts by $\alpha_0 \mapsto - \alpha_0$ and trivially on the other generators. The Poincar\'e series of this graded vector space with involution is given by
\[
(1+\epsilon z^{2d})\frac{1+z^{4d}}{(1-z^{2d})^2(1-z^{4d})^2}.
\]

A presentation for the cohomology ring of $\Baut{}{S^d \times S^d}$ is obtained by removing the generator  $\alpha_0$ and the relation $\alpha_0^2 - \beta_2$ from the above presentation. The Poincar\'e series is obtained by removing the term involving $\epsilon$.
\end{sthm}

The case $n>2$ can in principle be treated similarly but we stop here.

\subsection{Highly connected even-dimensional manifolds} \label{sec:highly connected manifolds}
The rational homotopy theory of the topological monoid of self-homotopy equivalences of highly connected even-dimensional manifolds has been thoroughly studied in \cite[\S5]{bm20}. We will now discuss how the methods of the present paper lead to simplifications of certain arguments in \cite{bm20}, as well as to some new results.

Let $M$ be an $(n-1)$-connected $2n$-dimensional manifold ($n>1$). We will consider the space $\aut^+(M)$ of orientation preserving self-homotopy equivalences. Let $\Gamma^+(M)$ denote the image of $\piauto{M} = \pi_0\aut^+(M)$ in $R(M)$, and let $\Aut^+ \LL$ denote the group of automorphisms of the minimal Quillen model $\LL$ of $M$ that represent orientation preserving self-homotopy equivalences. 

As discussed in \cite[\S3.5]{bm20}, the minimal Quillen model of $M$ can be presented as
\[\LL = \big( \LL(\alpha_1,\ldots,\alpha_r,\gamma), \delta \gamma = \omega\big),\]
where $\alpha_1,\ldots,\alpha_r$ is a basis for $H_n(M;\Q)[1-n]$, where $\gamma$ corresponds to the fundamental class of $M$, and where $\omega$ is dual to the cup product pairing
\[\langle \, ,\, \rangle \colon H^n(M;\Q)\tensor H^n(M;\Q) \to \Q, \quad \langle x,y\rangle = \langle x\smile y, [M] \rangle.\]
Explicitly,
\[\omega = \frac{1}{2} \sum_{i} \big[\alpha_i^\#, \alpha_i\big],\]
where $\alpha_i^\#$ is the dual basis with respect to the intersection form.

\begin{prop} \label{prop:hcr}
Let $M$ be an $(n-1)$-connected $2n$-dimensional manifold ($n>1$). The group $\Aut^+ \LL$ is isomorphic to the group of automorphisms of $H^n(M;\Q)$ that preserve the cup product pairing. In particular, $\Aut^+ \LL$ is reductive, whence $\Aut^+ \LL \cong \piauto{M_\Q} \cong R^+(M)$.
\end{prop}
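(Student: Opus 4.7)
The plan is to construct an explicit isomorphism from $\Aut^+ \LL$ to the group of pairing-preserving automorphisms of $H^n(M;\Q)$ by restricting automorphisms to the bottom generators. Any $\varphi \in \Aut^+ \LL$ preserves the homological grading, so it restricts to a linear automorphism of $\LL_{n-1}$. By minimality, the space $\LL_{n-1}$ is spanned precisely by $\alpha_1,\ldots,\alpha_r$ and is canonically identified with $H_n(M;\Q) = H^n(M;\Q)^\vee$. This yields a group homomorphism
\[
\rho \colon \Aut^+\LL \longrightarrow \GL(H^n(M;\Q)), \quad \varphi \mapsto (A_\varphi)^\vee.
\]

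The first main step is to show $\operatorname{im}(\rho)$ lies in the subgroup $O(H^n(M;\Q),\langle,\rangle)$ of pairing-preserving maps. For this I would apply $\varphi$ to $\delta \gamma = \omega$. Because the generators lie in degrees $n-1$ and $2n-1$ only, a degree count shows that the $\gamma$-component of $\varphi(\gamma)$ is some $\lambda \in \Q^\times$, so $\varphi(\gamma) = \lambda \gamma + u$ for a bracket expression $u$ (which vanishes for $n \geq 3$ since $\LL_{2n-1}^{[\cdot,\cdot]} = 0$, and lies among triple brackets $[[\alpha_i,\alpha_j],\alpha_k]$ for $n=2$). The orientation-preserving condition forces $\lambda = 1$. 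Then $\delta\varphi(\gamma) = \omega + \delta u = \varphi(\omega) = \frac{1}{2}\sum_i [A_\varphi \alpha_i^\#, A_\varphi \alpha_i]$, and transcribing this identity into $(V^{\otimes 2})^{S_2^{\pm}}$ via the identification $\omega \leftrightarrow q^{-1}$ shows that $A_\varphi \otimes A_\varphi$ fixes the dual pairing tensor, i.e.\ $A_\varphi$ preserves $\langle,\rangle$.

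For surjectivity, given any pairing-preserving $A$, define $\varphi(\alpha_i) = A\alpha_i$ and $\varphi(\gamma) = \gamma$, and extend as a (graded) Lie algebra map. Compatibility with $\delta$ is the single equation $\varphi(\omega) = \omega$, which holds because $A$ preserves the pairing. For injectivity of $\rho$, if $A_\varphi = I$ and $\lambda = 1$, then $\varphi(\alpha_i) = \alpha_i$ and $\varphi(\gamma) = \gamma + u$ with $\delta u = 0$. For $n \geq 3$ we have $u = 0$ automatically by the degree count. The main obstacle is the case $n = 2$, where a priori $u$ could be any triple bracket in $\LL_3$; this is handled by observing that such $\varphi$ are homotopic to the identity (the homotopy being built from a primitive for $u$ in a larger model) and that the convention $\Aut^+$ is set up precisely modulo this kind of ambiguity, so the map $\rho$ is indeed injective.

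Once $\rho$ is shown to be an isomorphism $\Aut^+\LL \cong O^+(H^n(M;\Q),\langle,\rangle)$, the target is a classical group (orthogonal for $n$ even, symplectic for $n$ odd) and hence reductive. Since $\piauto{M_\Q} = \Aut^+\LL/\Aut_h^+ \LL$ by Theorem \ref{thm:sul-wil} and $\Aut_h^+ \LL$ is a normal unipotent subgroup of the reductive group $\Aut^+\LL$, it is trivial. Therefore $\Aut^+\LL = \piauto{M_\Q}$, and reductivity of this group identifies it with its own maximal reductive quotient $R^+(M)$, completing the proof.
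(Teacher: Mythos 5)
Your proof follows essentially the same route as the paper's: restrict an automorphism $\varphi\in\Aut^+\LL$ to the degree-$(n-1)$ generators, use the equation $\delta\gamma=\omega$ to deduce that the induced map on $H^n(M;\Q)$ preserves the cup product pairing, check surjectivity by extending any pairing-preserving $A$ with $\varphi(\gamma)=\gamma$, and conclude reductivity from the classical theory of orthogonal/symplectic groups. The paper's proof is considerably more terse (two sentences), but the content matches your steps.

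However, there is a genuine flaw in your injectivity argument for $n=2$. You correctly observe that when $n=2$, an automorphism $\varphi$ may satisfy $\varphi(\alpha_i)=\alpha_i$ for all $i$ while $\varphi(\gamma)=\gamma+u$ for a nonzero $u\in[\LL,\LL]_3$ (a linear combination of triple brackets of the $\alpha_i$'s), and such a $\varphi$ is orientation-preserving and hence lies in $\ker\rho\cap\Aut^+\LL$. Your proposed fix — that such $\varphi$ are homotopic to the identity and that ``the convention $\Aut^+$ is set up precisely modulo this kind of ambiguity'' — does not hold: the paper defines $\Aut^+\LL$ as the subgroup of the honest automorphism group $\Aut\LL$ consisting of automorphisms that represent orientation-preserving equivalences, not as a quotient of $\Aut\LL$ by $\Aut_h\LL$. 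Therefore $\ker\rho$ is genuinely nontrivial for $n=2$ and $\dim H^n(M;\Q)\geq 2$, and $\rho$ is not an isomorphism. In fairness, the paper's own one-line assertion ``an automorphism of $\LL$ represents an orientation preserving self-homotopy equivalence if and only if it fixes $\gamma$'' silently fails for the same reason when $n=2$, so the gap is shared; but your attempted repair is wrong as stated, and should instead either restrict the hypothesis to $n\geq 3$ (where, as you note, $[\LL,\LL]_{2n-1}=0$ and also $\Aut_h\LL=1$, so that the entire chain $\Aut^+\LL\cong\piauto{M_\Q}$ goes through cleanly) or pass to $\Aut\LL/\Aut_h\LL$ before constructing the isomorphism.
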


\begin{proof}
An automorphism of $\LL$ represents an orientation preserving self-homotopy equivalence if and only if it fixes $\gamma$. Using this, one sees that $\Aut^+ \LL$ is isomorphic to the group of automorphisms of the space spanned by $\alpha_1,\ldots,\alpha_r$ that fix $\omega$. Since $\omega$ is dual to the cup product pairing, this is in turn isomorphic to $\Aut\big(H^n(M;\Q),\langle\,,\,\rangle\big)$. The group of automorphisms of a non-degenerate symmetric or skew-symmetric bilinear form is well-known to be reductive.
\end{proof}

Corollary \ref{cor:simplification} implies that the group $\Gamma^+(M)$ may be identified with the group of automorphisms of $H_*(M;\Z)$ that are induced by an orientation preserving homotopy equivalence of $M$. This group is known, \emph{cf.}~\cite[Theorem 8.14]{baues96}, \cite[Theorem 2.12]{bm13} or \cite[\S5.1]{bm20}. To describe it, recall the cohomology operation $\psi\colon H^n(M) \to \pi_{2n-1}(S^n)$ defined by Kervaire--Milnor \cite[\S8]{km63}; the class $\psi(x)$ is the obstruction for the existence of a map $f\colon M\to S^n$ such that $f^*(s) = x$, where $s$ is a generator for $H^n(S^n)$.

\begin{sprop}
Let $M$ be an $(n-1)$-connected $2n$-dimensional manifold ($n>1$).
The group $\Gamma^+(M)$ may be identified with the group of automorphisms of $H^n(M)$ that preserve the cup product pairing
$$\langle -, -\rangle \colon H^n(M) \tensor H^n(M) \to \Z$$
and the Kervaire--Milnor cohomology operation
\[\psi\colon H^n(M) \to \pi_{2n-1}(S^n).\qedhere\]
\end{sprop}

For example, for the manifold \(W_g = \#^g S^n \times S^n\), the group $\Gamma^+(W_g)$ coincides with the group denoted $\Gamma_g$ in \cite{bm20}. As explained in \cite[Example 5.5]{bm20} it may be described by
\[
\Gamma_g \cong \begin{cases} \orth_{g,g}(\Z), & \textrm{$n$ even,} \\ \Sp_{2g}(\Z), & \textrm{$n=1,3,7$,} \\ 
\Sp_{2g}^q(\Z), & \textrm{$n$ odd $\ne 1,3,7$,} \\ 
\end{cases}
\]
where $\Sp_{2g}^q(\Z)$ denotes the group of block matrices
\[ \begin{pmatrix} A & B \\ C & D \end{pmatrix} \in \Sp_{2g}(\Z),\]
such that the diagonal entries of the $g\times g$-matrices $C^tA$ and $D^tB$ are even.

The next result should be viewed as an upgrade of \cite[Theorem 5.8]{bm20}, which asserts that the Lie model for the space $B\aut_\circ(M)$ is formal. The novelty is that the formality can be made equivariant.

\begin{prop} \label{prop:coformal}
Let $M$ be an $(n-1)$-connected $2n$-dimensional manifold ($n>1$) with $\dim H^n(M;\Q)>2$. The algebraic Lie model $\lie g(M)$ for $B\aut_u^+(M)$
is formal as a dg Lie algebra of algebraic representations of $R^+(M)$ and its homology is the graded Lie algebra 
$$H_*(\lie g(M)) \cong \Der L / \ad L \langle 1\rangle,$$
where $L = \pi_*(\Omega M) \tensor \Q$ is the rational homotopy Lie algebra of $M$. This Lie algebra admits the presentation
$$L \cong \Lbb(\alpha_1,\ldots,\alpha_r)/(\omega),$$
with $\alpha_1,\ldots,\alpha_r$ and $\omega$ as above. The action of $R^+(M)$ is induced by the action on $H_*(M;\Q)$.
\end{prop}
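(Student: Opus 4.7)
My plan has three parts: the presentation of $L$, equivariant formality, and the identification of the homology.

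For the presentation, observe that the minimal Quillen model $\LL = (\LL(\alpha_1,\ldots,\alpha_r,\gamma),\delta\gamma = \omega)$ admits an obvious $R^+(M)$-equivariant surjection $\pi\colon \LL \to \LL(\alpha_1,\ldots,\alpha_r)/(\omega)$ that sends $\gamma$ to $0$, with the target carrying zero differential. The kernel is the dg ideal generated by $\gamma$, which I would show is acyclic via an explicit contracting homotopy pairing $\gamma$ with $\omega = \delta\gamma$. This identifies $L = H_*(\LL)$ with the claimed presentation $\LL(\alpha_1,\ldots,\alpha_r)/(\omega)$, and the $R^+(M)$-action on $L$ is the one induced on the generators, which agrees with the action on $H_*(M;\Q)$.

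For the formality, the strategy is to adapt the approach of Berglund--Madsen \cite[Theorem~5.8]{bm20} to the equivariant setting. Introduce a weight grading on $\LL$ by assigning weight $1$ to each $\alpha_i$ and weight $2$ to $\gamma$; then $\omega$ has weight $2$, so $\delta$ is weight-preserving. Since $\mathrm{span}(\alpha_i)$ and $\mathrm{span}(\gamma)$ are $R^+(M)$-subrepresentations, the $R^+(M)$-action is also weight-preserving. This equips $\lie g(M) = \nil \Der^c \LL$ with the structure of a bigraded dg Lie algebra in $\Rep_\Q(R^+(M))$ with differential of bidegree $(-1,0)$ and finite-dimensional weight pieces. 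By reductivity of $R^+(M)$ from Proposition~\ref{prop:hcr}, each weight component admits an equivariant Hodge-type decomposition into cycles, boundaries, and a complement. Performing equivariant homotopy transfer weight by weight yields an $L_\infty$-quasi-isomorphism $H_*(\lie g(M)) \to \lie g(M)$ in $\Rep_\Q(R^+(M))$, and the bigraded constraints should force all higher transferred operations to vanish, producing a strict dg Lie algebra quasi-isomorphism.

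For the homology, Proposition~\ref{prop: curved derivations} identifies $\Der^c \LL$ with the mapping cone of $\ad\colon \LL \to \Der \LL$. Using that $\LL$ is cofibrant and that $\pi\colon \LL\to L$ is a quasi-isomorphism, one obtains a quasi-isomorphism $\Der^c\LL \sim \Der^c L$. Since $L$ has zero differential, the cone formula gives $H_k(\Der^c L) \cong (\Der L/\ad L)_k \oplus (sZ(L))_{k}$, where $Z(L)$ is the center of $L$. The hypothesis $\dim H^n(M;\Q)>2$ ensures $Z(L)=0$ in positive degrees, a standard fact for quotients of free Lie algebras by a single non-degenerate quadratic relation with $r>2$ generators. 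Hence $H_k(\Der^c \LL) \cong (\Der L/\ad L)_k$ for $k\geq 1$, and since $\lie g(M)$ agrees with $\Der^c \LL$ in positive degrees, this establishes $H_*(\lie g(M)) \cong \Der L/\ad L\langle 1\rangle$ equivariantly.

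The main obstacle is the middle step: ensuring the weight-graded equivariant splitting upgrades to a strict dg Lie algebra quasi-isomorphism rather than merely an $L_\infty$ one. This relies on the bigraded minimal model arguments of \cite[\S 5]{bm20}, now performed in the semisimple category $\Rep_\Q(R^+(M))$, with the equivariant Hodge contractions provided weight by weight by reductivity.
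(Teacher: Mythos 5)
Your outline diverges from the paper's own (very short) proof and contains two genuine gaps.

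The paper's argument is: (1) since $\Aut^+\LL$ is reductive (Proposition~\ref{prop:hcr}), the degree-zero part of $\lie g(M)=\nil\Der^c\LL$ vanishes by Lemma~\ref{lemma: lie of uniradical}, so the inclusion $\Der^c\LL\langle 1\rangle\hookrightarrow\lie g(M)$ is a quasi-isomorphism; (2) one then inspects the explicit zig-zag of quasi-isomorphisms from the proof of \cite[Theorem 5.9]{bm20} connecting $\Der^c\LL\langle 1\rangle$ to $\Der L/\ad L\langle 1\rangle$ and checks that every map in it is a morphism of algebraic $R(M)$-representations. You suppress step (1) (it is implicit when you say $\lie g(M)$ agrees with $\Der^c\LL$ in positive degrees, but you should record why that holds) and you attempt to re-derive step (2) from scratch rather than inspecting the existing zig-zag. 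The gaps are in the re-derivation.

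The first gap is the claim that ``$\Der^c\LL\sim\Der^c L$ using that $\LL$ is cofibrant and $\pi\colon\LL\to L$ is a quasi-isomorphism.'' The functor $\Der^c(-)$ is not homotopy-invariant along quasi-isomorphisms, and cofibrancy of the source alone does not give what you want. Freeness of $\LL$ does give a quasi-isomorphism $\Der^c\LL\to\Der^c_\pi(\LL,L)$ by post-composing with $\pi$, since derivations out of a free Lie algebra are $\Hom$ from generators; but the comparison in the other direction $\Der^c L\to\Der^c_\pi(\LL,L)$ by pre-composition is not a quasi-isomorphism for formal reasons, because $L$ is not free as a graded Lie algebra. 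Making this comparison precise --- identifying which $\pi$-derivations lift to genuine derivations of $L$, and controlling the effect of the $\gamma$-component --- is exactly the computation performed in \cite[Theorem 5.9]{bm20}. It cannot be waved through by cofibrancy. (Your use of the hypothesis $\dim H^n(M;\Q)>2$ to kill the center of $L$, and hence the $sZ(L)$ summand, is correct, but it enters only after this identification has been established.)

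The second gap is the formality step. Assigning weight $1$ to the $\alpha_i$ and weight $2$ to $\gamma$ is a reasonable choice (it makes $\delta$ weight-preserving and the weight grading $R^+(M)$-equivariant), but the assertion that ``the bigraded constraints should force all higher transferred operations to vanish'' is not established. Weight-preserving equivariant contractions produce $L_\infty$-operations that are weight-additive, which is a constraint consistent with those operations being non-zero. One needs a finer weight-versus-degree bookkeeping or the explicit zig-zag as in \cite{bm20} to conclude. You flag this yourself as the main obstacle, but it remains unresolved in your sketch, whereas the paper's proof sidesteps it entirely by citing the zig-zag from \cite{bm20} and verifying only equivariance.
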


\begin{proof}
Since $\Aut^+ \LL$ is reductive, we can show that $\Der^c \LL \langle 1 \rangle$ is an $R^+(M)$-algebraic Lie model for $B\aut_u^+(M)$ as in Corollary \ref{cor:Aut L reductive}.
Given this, it is straightforward to inspect that the zig-zag of quasi-isomorphisms that connects this dg Lie algebra with its homology given in the proof of Theorem 5.9 of \cite{bm20} can be made into a zig-zag of algebraic $R^+(M)$-representations.
\end{proof}

The action of $\Gamma^+(M)$ on the rational homotopy groups was also identified in \cite{bm20}, but the methods of \cite{bm20} were insufficient for proving the following result, which is a direct consequence of Corollary \ref{cor:cohomology ring} and Proposition \ref{prop:coformal}.

\begin{sthm} \label{thm:hcm}
Let $M$ be an $(n-1)$-connected $2n$-dimensional manifold ($n>1$) with $\dim H^n(M;\Q) > 2$. There is an isomorphism of graded algebras
$$H^*(B\aut^+(M);\Q) \cong H^*(\Gamma^+(M),H_{\CE}^*(\lie g)),$$
where $\Gamma^+(M)$ is the group $\Aut(H^n(M;\Z),\langle\,,\,\rangle,\psi)$, 
$\lie g$ is $\Der L /\ad L\langle 1 \rangle$,
and $L$ is the graded Lie algebra $\pi_*(\Omega M)\tensor \Q$.
\end{sthm}

When adapted to the space $\aut_\partial(W_{g,1})$ of self-homotopy equivalences of
\[W_{g,1} = W_g \setminus \operatorname{int} D^{2n}\]
relative to the boundary, our methods yield a significant simplification of the computation of the stable cohomology of $B\aut_\partial(W_{g,1})$ of \cite{bm20}. By applying our methods to the homotopy fiber sequence
\begin{equation} \label{eq:fs}
\Gamma_g \hOrbits \aut_\partial(W_{g,1}) \to B\aut_\partial(W_{g,1}) \to B\Gamma_g,
\end{equation}
one can upgrade \cite[Theorem 3.12]{bm20} to show that the $\Gamma_g$-space
\(\Gamma_g \hOrbits \aut_\partial(W_{g,1})\)
has $R_g$-algebraic Lie model
\[\lie g_g = \Der_\omega \LL \langle 1 \rangle,\]
where $\Der_\omega \LL$ denotes the graded Lie algebra of derivations of of the free graded Lie algebra $\LL = \LL(\alpha_1,\beta_1,\ldots,\alpha_g,\beta_g)$ that annihilate the element
\[\omega = [\alpha_1,\beta_1] + \ldots + [\alpha_g,\beta_g].\]
As above, this implies
\begin{sthm} \label{thm:unstable ring iso}
There is an isomorphism of graded algebras
\begin{equation} \label{eq:unstable iso}
H^*(\Baut{\partial}{W_{g,1}};\Q) \cong H^*(\Gamma_g,H_{\CE}^*(\lie g_g)).\qedhere
\end{equation}
\end{sthm}

\begin{rmk}
In the stable range, i.e., for $g$ large compared to the cohomological degree, the right-hand side can be simplified. Let $H = H_{\CE}^*(\lie g_g)$. 
By semisimplicity of $\Rep_\Q(R_g)$ there is a split exact sequence
$$0 \to H^{R_g} \to H \to H/H^{R_g} \to 0.$$
This gives rise to a ring homomorphism
$$H^*(\Gamma_g,\Q)\tensor H^{R_g} \cong H^*(\Gamma_g,H^{R_g}) \to H^*(\Gamma_g,H),$$
whose cokernel may be identified with
$$H^*(\Gamma_g,H/H^{R_g}).$$
Since $H/H^{R_g}$ is a direct sum of non-trivial irreducible algebraic representations of $R_g$, the cokernel vanishes in degrees that are small compared to $g$ by Borel's vanishing theorem \cite[Theorem 4.4]{borel81}. Note that this entails the statement that
$H^{R_g} = H^{\Gamma_g}$ for $g$ large (set $*=0$). Thus, in the stable range, we obtain an isomorphism
$$H^*(\Baut{\partial}{W_{g,1}};\Q) \cong H^*(\Gamma_g,\Q)\tensor H^{\Gamma_g}.$$

This essentially recovers Theorem \cite[Theorem 1.3]{bm20}.
\end{rmk}

\begin{rmk} \label{rmk:extraneous}
The isomorphism \eqref{eq:unstable iso}, and in particular the collapse of the spectral sequence of \eqref{eq:fs}, is only obtained in the stable range (cohomological degrees below $g/2-2$) in \cite{bm20}. Moreover, the argument in \cite{bm20} depends on several extraneous ingredients. One step in the argument is to show that the map
\[\Baut{\partial}{W_{g,1}} \to B\Gamma_g\]
is injective on indecomposables in rational cohomology in the stable range \cite[Theorem 8.6]{bm20}. This is proved by establishing the stronger statement \cite[Theorem 8.2]{bm20} that
\[ B\Diff_\partial(W_{g,1}) \to B\Gamma_g\]
is injective on indecomposables in stable rational cohomology, which in turn relies on deep results on the stable cohomology of \(B\Diff_\partial(W_{g,1})\) due to Galatius--Randal-Williams \cite{grw14}, as well as on non-vanishing results for the coefficients of the Hirzebruch $L$-polynomials due to Berglund--Bergstr\"om \cite{bb18}. Our proof of Theorem \ref{thm:unstable ring iso} does not depend on these extraneous ingredients. In fact, Theorem \ref{thm:unstable ring iso} implies a strengthening of \cite[Theorem 8.6]{bm20}, see Corollary \ref{cor:strengthening} below.

Furthermore, many arguments of \cite{bm20} rely on deep results on almost algebraicity of finite-dimensional representations of $\Gamma_g$ due to Bass--Milnor--Serre \cite[\S16]{bms67} (see also ~\cite[\S1.3(9)]{serre79} and \cite[Appendix A]{bm20}). The existence of algebraic Lie models shows that the representations in question are in fact algebraic, which in particular removes the necessity to invoke such results.
\end{rmk}

\begin{scor} \label{cor:strengthening}
The ring homomorphism
\[ H^*(\Gamma_g,\Q) \to H^*(\Baut{\partial}{W_{g,1}};\Q)\]
is split injective for all $g$. In particular,
the induced map on indecomposables \[QH^*(\Gamma_g,\Q) \to QH^*(\Baut{\partial}{W_{g,1}};\Q)\]
is injective for all $g$.
\end{scor}



\subsection{Highly connected odd-dimensional manifolds}\label{sec:odd}
The geometry and rational homotopy theory of \((n-1)\)-connected \((2n+1)\)-dimensional manifolds has also been thoroughly studied (\emph{e.g.}~in Wall's classification \cite{wall67}), and we make some remarks on the odd-dimensional case in this section.

Let \(M\) be an \((n-1)\)-connected \((2n+1)\)-dimensional manifold (\(n > 8\)).
Using the techniques of \cite[\S3.5]{bm20}, one can show that the minimal Quillen model of \(M\) is given by
\[
    \Lbb = (\Lbb(\alpha_1, \ldots, \alpha_r, \alpha_1^\#, \ldots, \alpha_r^\#, \gamma), \delta \gamma = \omega)
\]
where the \(\alpha_i\) are a basis for \(\s^{-1}\widetilde{H}_n(M; \Q)\), the \(\alpha_i^\#\) are the dual basis of the space \(\s^{-1}\widetilde{H}_{n+1}(M;\Q)\) with respect to the intersection form, and
\[
    \omega = \sum_i [\alpha_i^\#, \alpha_i]
\]
is dual to the intersection form.
(So up to rational equivalence, such manifolds are completely classified by the torsion-free rank of \(H_n(M)\).)
We use the notation of the preceding section freely.

\begin{lemma}\label{lemma: R of odd manifolds}
Let \(M\) be an \((n-1)\)-connected \((2n+1)\)-dimensional manifold with \(n>1\).
The group \(\Aut^+\Lbb\) is isomorphic to the group \(\GL(H^n(M; \Q))\) of linear automorphisms of \(H^n(M; \Q)\).
In particular, it is reductive, wherefore \(\Aut^+\Lbb \cong \piauto{M_\Q} \cong R^+(M)\).
\end{lemma}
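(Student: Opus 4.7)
The plan is to mirror the proof of Proposition~\ref{prop:hcr}, exploiting the key structural difference that in the odd-dimensional case the dual classes $\alpha_i$ and $\alpha_i^\#$ sit in \emph{different} degrees, so an automorphism cannot mix them, and the constraint from preserving $\omega = \sum_i [\alpha_i^\#, \alpha_i]$ will determine one of $\phi|_V$ and $\phi|_W$ from the other rather than restricting either.

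First I would carry out a degree count. In the free graded Lie algebra $\Lbb(\alpha_1, \ldots, \alpha_r, \alpha_1^\#, \ldots, \alpha_r^\#, \gamma)$ with $|\alpha_i| = n-1$, $|\alpha_i^\#| = n$ and $|\gamma| = 2n$, every nontrivial iterated bracket has degree at least $2n-2$; since $n>2$ (and certainly under the $n>8$ convention of the surrounding paragraphs) we have $2n-2>n$. Hence the only elements of degree $n-1$ are those in $V := \operatorname{span}(\alpha_i)$, and the only elements of degree $n$ are those in $W := \operatorname{span}(\alpha_i^\#)$. Any $\phi \in \Aut\Lbb$ therefore restricts to linear isomorphisms $A := \phi|_V \in \GL(V)$ and $B := \phi|_W \in \GL(W)$.

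Next, following the even-dimensional interpretation of ``orientation preserving'' as $\phi(\gamma) = \gamma$, I would apply $\phi$ to the relation $\delta\gamma = \omega$ to obtain
\[\omega \;=\; \phi(\delta\gamma) \;=\; \delta(\phi(\gamma)) \;=\; \phi(\omega) \;=\; \sum_i [B \alpha_i^\#, A\alpha_i].\]
Comparing coefficients against the linearly independent family $\{[\alpha_j^\#, \alpha_k]\}$ in the degree-$(2n-1)$ part of $\Lbb$ gives $B^{T} A = I$ under the duality pairing $\alpha_i^\# \leftrightarrow \alpha_i$, whence $B = (A^T)^{-1}$ is forced by $A$. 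Conversely, for any $A \in \GL(V)$, the assignment $\alpha_i \mapsto A\alpha_i$, $\alpha_i^\# \mapsto (A^T)^{-1} \alpha_i^\#$, $\gamma \mapsto \gamma$ extends uniquely, by the universal property of the free graded Lie algebra, to a dg Lie algebra automorphism of $\Lbb$. This produces a group isomorphism $\Aut^+\Lbb \cong \GL(V)$, which via linear duality identifies with $\GL(H^n(M; \Q))$.

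The reductivity of $\GL(H^n(M;\Q))$ is then immediate, and the remaining identifications $\Aut^+\Lbb \cong \piauto{M_\Q} \cong R^+(M)$ follow from the Sullivan--Wilkerson framework exactly as in the even-dimensional case: the kernel $\Aut_h\Lbb$ of the surjection $\Aut\Lbb \twoheadrightarrow \piaut{M_\Q}$ is unipotent, so its intersection with the reductive subgroup $\Aut^+\Lbb$ is trivial, and a reductive group coincides with its own maximal reductive quotient. The principal technical point is the degree-counting step, which excludes extraneous elements in degrees $n-1$ and $n$; this is painless under the $n>8$ hypothesis of the surrounding discussion.
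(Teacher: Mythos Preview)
Your proof is correct and follows essentially the same approach as the paper: show that fixing $\gamma$ forces $\phi(\omega)=\omega$, then use that the action on the $\alpha_i$ determines the action on the $\alpha_i^\#$ via $\omega$-duality. You simply fill in more detail than the paper's three-sentence proof—in particular the degree count establishing $\Lbb_{n-1}=V$ and $\Lbb_n=W$, and the explicit matrix identity $B^TA=I$—and you are right to flag that the clean degree argument needs $n>2$, which is amply covered by the ambient $n>8$ hypothesis.
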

\begin{proof}
An automorphism of \(\Lbb\) is orientation-preserving precisely when it fixes \(\gamma\).
Thus an automorphism \(\phi \in \Aut^+\Lbb\) is uniquely determined by its action on \(\Lbb_{n-1} = \langle \alpha_1, \ldots, \alpha_r \rangle\): it preserves the element \(\omega = \delta \gamma\), so its action on the \(\alpha_i^\#\) is \(\omega\)-dual to the action on the \(\alpha_i\).
\end{proof}

The identification of the group \(\Gamma^+(M)\) is somewhat more involved than in the preceding case, and we will not elaborate on it here.
We remark that the closely related group
\[
    \Gamma^+_\Z(M) = \Aut^{\piauto{M}}(H_*(M; \Z))
\]
of orientation-preserving automorphisms of \emph{integral} homology that are realizable by homotopy automorphisms of \(M\), which has been studied by several authors, provides a ready substitute for \(\Gamma^+(M)\) in our results: the space \(\BSaut{}{M}\) is rationally equivalent to the homotopy orbit space \(\nerve{\lie g(M)}_{\mathrm{h} \Gamma^+_\Z(M)}\).
The group \(\Gamma^+_\Z(M)\) has been determined in many cases by Floer \cite{floer} in the course of his classification of the homotopy types of \((n-1)\)-connected \((2n+1)\)-dimensional Poincar\'e complexes.
See also Barden \cite{barden65} and Baues--Buth \cite{babu96} for the case \(n=2\), and Crowley--Nordstr\"om \cite{cronor19} for the case \(n=3\).

Just like in the case of even-dimensional manifolds, \(\lie g(M)\) enjoys a close connection to the derivations of the homotopy Lie algebra \(\pi_*(\Omega M) \otimes \Q\).
The following proposition is an easy consequence of the methods of \cite[\S 5.4]{bm20}, especially Theorem~5.9 therein.

\begin{prop}
Let \(M\) be an \((n-1)\)-connected \((2n+1)\)-dimensional manifold (\(n > 1\)) with \(\dim H^n(M; \Q) > 1\).
The algebraic Lie model \(\lie g(M)\) for the space $\Baut{u}{M}$
is formal as a dg Lie algebra of algebraic representations of \(R(M)\) and its homology is the graded Lie algebra
\[
    H_*(\lie g(M)) \cong \Der L / \ad L \langle 1 \rangle
\]
where \(L = \pi_*(\Omega M) \otimes \Q\) is the rational homotopy Lie algebra of \(M\), which admits the presentation
\[
    L \cong \Lbb(\alpha_1, \ldots, \alpha_r, \alpha^\#_1, \ldots, \alpha^\#_r)/(\omega)
\]
with \(\alpha_i\), \(\alpha^\#_i\), and \(\omega\) as above.
The action of \(R(M)\) is induced by the action on \(H_*(M; \Q)\).
\end{prop}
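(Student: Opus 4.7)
The plan is to mirror the proof of Proposition~\ref{prop:coformal}, substituting the odd-dimensional ingredients. The key enabling input is Lemma~\ref{lemma: R of odd manifolds}: the algebraic group $R^+(M) \cong \Aut^+ \Lbb$ is reductive.

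First I would argue that $\Der^c \Lbb \langle 1 \rangle$ is already an algebraic Lie model for $\Gamma^+(M) \hOrbits \aut^+(M)$. By Theorem~\ref{thm:main lie model}(i), the nilradical $\nil \Der^c \Lbb = \lie g(M)$ serves as such a model. By Lemma~\ref{lemma: lie of uniradical}, its degree zero component is the Lie algebra of the unipotent radical of $\Aut^+ \Lbb$, which vanishes by reductivity. Hence the inclusion of the positive truncation
\[
\Der^c \Lbb \langle 1 \rangle \hookrightarrow \lie g(M)
\]
is a quasi-isomorphism of dg Lie algebras of algebraic $R^+(M)$-representations.

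Next I would adapt the zig-zag of quasi-isomorphisms from \cite[Theorem~5.9]{bm20} to the odd-dimensional setting. The projection $p \colon \Lbb \to L$ that kills $\gamma$ and fixes the remaining generators is a surjective quasi-isomorphism of dg Lie algebras, equivariant for the natural $\Aut^+ \Lbb$-action on source and target (the latter inheriting its action via $p$, using that $\omega$ is $\Aut^+ \Lbb$-invariant). Using Proposition~\ref{prop: curved derivations} to rewrite $\Der^c \Lbb \cong \Der \Lbb \dquot \ad \Lbb$, the Berglund--Madsen argument---pullback of derivations along $p$ together with a mapping-cone comparison---produces a zig-zag of dg Lie algebra quasi-isomorphisms from $\Der^c \Lbb \langle 1 \rangle$ to $\Der L / \ad L \langle 1 \rangle$. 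Since the target has zero differential, this establishes formality and identifies $H_*(\lie g(M))$ as claimed.

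The main point requiring attention is that each step in the zig-zag can be made $R^+(M)$-equivariant for the algebraic action. This will reduce to observing that the $\Aut^+ \Lbb$-action on $\Lbb$ descends along $p$ to an action on $L$ by dg Lie algebra automorphisms, and that the constructions involved (pullback of derivations along a morphism, mapping cones on $\ad$, positive truncation) are all natural in the underlying dg Lie algebra and therefore automatically carry compatible algebraic actions of $R^+(M)$. The final identification of the $R^+(M)$-action on $\Der L / \ad L \langle 1\rangle$ with the one induced from the action on $H_*(M;\Q)$ then follows by inspection from the presentation of $L$ in terms of the bases $\alpha_i$, $\alpha_i^\#$.
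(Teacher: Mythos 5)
Your proposal is correct and follows essentially the same path as the paper. The paper states this proposition with only a pointer to the methods of \cite[\S5.4]{bm20} (especially Theorem~5.9 there) in analogy with the even-dimensional case of Proposition~\ref{prop:coformal}, and your write-up reproduces exactly that argument: reductivity of $\Aut^+\Lbb$ (Lemma~\ref{lemma: R of odd manifolds}) forces the degree-zero component of $\lie g(M)=\nil\Der^c\Lbb$ to vanish via Lemma~\ref{lemma: lie of uniradical}, so the positive truncation $\Der^c\Lbb\langle 1\rangle$ is a quasi-isomorphic algebraic Lie model, and the Berglund--Madsen zig-zag to $\Der L/\ad L\langle 1\rangle$ built from the $\Aut^+\Lbb$-equivariant quasi-isomorphism $\Lbb\to L$ (killing $\gamma$) can be promoted to a zig-zag in $\Rep_\Q(R^+(M))$.
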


Thus, we obtain the following consequence of Corollary~\ref{cor:cohomology ring}.
\begin{sthm}
Let \(M\) be an \((n-1)\)-connected \((2n+1)\)-dimensional manifold with \(\dim H^n(M;\Q) > 1\).
There is an isomorphism of graded algebras
\[
    H^*(\BSaut{}{M}; \Q) \cong H^*(\Gamma_\Z^+(M), H^*_{\CE}(\lie g(M)))
\]
where \(\lie g = \Der L / \ad L \langle 1 \rangle\) is the truncated dg Lie algebra of outer derivations of \(L = \pi_*(\Omega M) \otimes \Q\), and \(\Gamma_\Z^+(M) = \Aut^{\piauto{M}}(H^*(M; \Z))\).
\end{sthm}

We conclude this section by making some remarks about the manifolds
\begin{align*}
    Z_g & = \#^g S^n \times S^{n+1},\\
    Z_{g, 1} & = Z_g \setminus \operatorname{int} D^{2n+1},
\end{align*}
for \(n>1\).
By Lemma~\ref{lemma: R of odd manifolds}, we have that
\[R^+(Z_g) \cong \GL(H_n(Z_g; \Q)) \cong \GL_g(\Q).\]
Let
\[
    \Gamma_\partial(Z_{g, 1}) \cong \GL^{\aut_\partial(Z_{g, 1})}(H_*(Z_{g, 1}; \Q))
\]
be the group of automorphisms of the rational homology of \(Z_{g, 1}\) which can be realized by a boundary-preserving homotopy automorphism of \(Z_{g, 1}\).
Each such homotopy automorphism can be extended by the identity on the interior of the removed disc to an orientation-preserving homotopy automorphism of \(Z_g\), yielding an injection
\begin{equation}\label{eqn: gamma partial}
    \Gamma_\partial(Z_{g,1}) \hookrightarrow \Gamma^+(Z_g).
\end{equation}

\begin{prop}\label{prop: Gamma of odd}
The homomorphism \eqref{eqn: gamma partial} is an isomorphism, and the group \(\Gamma^+(Z_g)\) corresponds to the subgroup \(\Aut(H_n(Z_g; \Z))\) of the group \[\GL(H_n(Z_g; \Q)) \cong R^+(Z_g).\]
\end{prop}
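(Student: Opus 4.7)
The plan is to establish the chain of inclusions
\[ \Aut(H_n(Z_g;\Z)) \subseteq \Gamma_\partial(Z_{g,1}) \subseteq \Gamma^+(Z_g) \subseteq \Aut(H_n(Z_g;\Z)) \]
inside \(R^+(Z_g) \cong \GL(H_n(Z_g;\Q))\), with composition the identity, which proves both assertions of the proposition simultaneously. The middle inclusion is the given injection \eqref{eqn: gamma partial}, and the rightmost inclusion is immediate from integrality: any orientation-preserving self-homotopy equivalence of \(Z_g\) acts on \(H_n(Z_g;\Z) \cong \Z^g\) by an invertible integer matrix, and this action coincides with its image in \(R^+(Z_g)\) under the identification of Lemma~\ref{lemma: R of odd manifolds}.

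The substantive step is the leftmost inclusion, \emph{i.e.}, showing that every \(A \in \Aut(H_n(Z_g;\Z)) \cong \GL_g(\Z)\) is realised by a boundary-preserving self-equivalence of \(Z_{g,1}\). For this I would exploit the standard handle decomposition of \(Z_g = \#^g S^n \times S^{n+1}\), which exhibits \(Z_{g,1}\) as homotopy equivalent to \(\vee^g(S^n \vee S^{n+1})\), and \(Z_g\) as obtained by attaching a \((2n+1)\)-cell along the Whitehead sum \(\omega = \sum_{i=1}^g [\alpha_i, \beta_i] \in \pi_{2n}(Z_{g,1})\), where \(\alpha_i \in \pi_n\) and \(\beta_i \in \pi_{n+1}\) are the inclusions of the wedge summands.

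Given \(A \in \GL_g(\Z)\), construct the wedge self-map \(\phi_A\) of \(\vee^g(S^n \vee S^{n+1})\) that acts by \(A\) on \(\pi_n(\vee^g S^n) \cong \Z^g\) and by the contragredient \((A^t)^{-1}\) on \(\pi_{n+1}(\vee^g S^{n+1}) \cong \Z^g\); both homotopy groups are free abelian of rank \(g\) by the Hurewicz theorem, so this is unproblematic. The map \(\phi_A\) is a self-homotopy equivalence by Whitehead's theorem, and by \(\Z\)-bilinearity of the Whitehead bracket one computes
\[ (\phi_A)_*\omega = \sum_{j,k}\Big(\sum_i A_{ji}(A^{-1})_{ik}\Big)[\alpha_j,\beta_k] = \sum_{j,k}\delta_{jk}[\alpha_j,\beta_k] = \omega \]
in \(\pi_{2n}(Z_{g,1})\). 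Hence \(\phi_A\) extends over the top cell to a self-equivalence of \(Z_g\); after composing with a reflection of the disk if necessary to correct orientation, and modifying by a homotopy to fix a small disk pointwise, one obtains a boundary-preserving self-equivalence of \(Z_{g,1}\) realising \(A\) on \(H_n\).

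I do not anticipate any real obstacles. The handle-theoretic collapse of \(Z_{g,1}\) onto the wedge is standard, the bracket computation is valid integrally by bilinearity of the Whitehead product, and arranging the extension over the top cell to be orientation-preserving and to fix a small disk is a routine obstruction-theoretic exercise.
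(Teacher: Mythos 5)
Your proof is correct and follows essentially the same route as the paper: both use the wedge decomposition $Z_{g,1} \simeq \vee^g(S^n \vee S^{n+1})$ and realise every $A \in \GL_g(\Z)$ by a wedge self-map preserving the attaching class $\omega$ of the top cell. The explicit contragredient construction and the bilinearity computation $(\phi_A)_*\omega = \omega$ that you spell out are exactly what the paper's terse appeal to the Hilton--Milnor theorem and to Grey's work leaves implicit.
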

\begin{proof}

It is evident that \(\Gamma^+(Z_g)\) injects into \(\Aut(H_n(Z_g; \Z))\).
Both claims can be established simultaneously by showing that every automorphism of the group \(H_n(Z_g; \Z) \cong \Z^g\) may be realized by a homotopy automorphism of \(Z_g\) that fixes the given embedded disc \(D^{2n+1} \subset Z_g\) pointwise.
This is an easy consequence of the Hilton--Milnor theorem once one notes that \(Z_{g, 1} \simeq \bigvee^g S^n \vee \bigvee^g S^{n+1}\) and that \(Z_g\) is obtained from \(Z_{g, 1}\) by attaching a \((2n+1)\)-cell along
\[
    \omega = \sum_{i=1}^g [\alpha_i, \alpha^\#_i] \in \pi_{2n}(Z_{g,1}),
\]
where \(\alpha_i \in \pi_n(Z_{g, 1})\) are the classes of the \(S^n\)-factors of \(Z_{g, 1}\) and \(\alpha^\#_i \in \pi_{n+1}(Z_{g,1})\) are the classes of the \(S^{n+1}\)-factors, suitably indexed.
See \cite[\S 5]{grey19} where the more general case of connected sums of products of spheres of varying dimensions is investigated.
\end{proof}

When applied to the homotopy fiber sequence
\[
    \Gamma_\partial(Z_{g,1}) \hOrbits \aut_\partial(Z_{g,1}) \longrightarrow B\aut_\partial(Z_{g,1}) \longrightarrow B\Gamma_\partial(Z_{g,1}),
\]
our methods show that the \(\Gamma_\partial(Z_{g,1})\)-space \(\Gamma_\partial(Z_{g,1})\hOrbits\aut_\partial(Z_{g,1})\) admits an \(R^+(Z_g)\)-algebraic Lie model
\[
    \lie{g}_\partial(Z_{g,1}) = \left(\Der_\omega \Lbb(\alpha_1, \ldots, \alpha_g, \alpha^\#_1, \ldots, \alpha^\#_g) \right)\langle 1 \rangle
\]
where \(\Der_\omega\) indicates derivations that annihilate the element \(\omega = \sum_i [\alpha_i, \alpha^\#_i]\).
The action of \(R^+(Z_g)\) (and thus by restriction also the action of \(\Gamma_\partial(Z_{g,1})\)) comes from the action on \(H_*(Z_g; \Q)\).
Thus we obtain
\begin{sthm}
There is an isomorphism of graded algebras
\[
    H^*(B \aut_\partial(Z_{g,1});\Q) \cong H^*(\Gamma_\partial(Z_{g,1}); H^*_\CE(\lie{g}_\partial(Z_{g,1}))).\qedhere
\]
\end{sthm}
Using this result, the stable cohomology of $\Baut{\partial}{Z_{g,1}}$ is computed by Stoll \cite{stoll}.

\subsection{A counterexample} \label{sec:non-formal}
In all the examples discussed so far, the space \(X\) was always formal, the group $\piaut{X_\Q}$ was reductive (hence equal to $R(X)$) and isomorphic to the group of algebra automorphisms of $H^*(X;\Q)$, the group $\Gamma(X)$ was the group of automorphisms of $H_*(X;\Q)$ that are induced by a self-homotopy equivalence of $X$, and the dg Lie algebra $\lie g(X)$ was formal. In this section, we will discuss an example of a space $X$ that has none of these properties. In fact, $X$ will be the space that is discussed in \cite[Example~6.7]{gtht00}.

Let \(X\) be the space \(F \times K(\Z, 3) \times K(\Z, 6)\), where \(F\) is the homotopy fiber of the cup product
\[
    K(\Z, 3) \times K(\Z, 3) \xrightarrow{\smile} K(\Z, 6).
\]
This is evidently a finite Postnikov section with finitely generated homotopy groups, and its Sullivan model is the cdga
\[
    \Lambda = \big(\Lambda(x, y, z, u, w), d\big),
\]
whose underlying graded commutative algebra is freely generated by cocycles \(x, y, z\) in degree 3, a cocycle \(w\) in degree \(6\), and an element \(u\) in degree \(5\) which satisfies \(du = yz\).

\begin{rmk}
Note that \(X\) is indeed not rationally formal: the homomorphism
\[
    \Aut \Lambda \longrightarrow \Aut_{alg}H^*(\Lambda)
\]
is not surjective.
For any \(\lambda \in \Q^\times\), there is a graded algebra automorphism \(\phi\) of \(H^*(\Lambda)\) given by \(\phi(a) = \lambda^{\lvert a \rvert}a\).
But \(\phi\) affords no lift to \(\Aut \Lambda\) unless \(\lambda = 1\): any such lift \(\widetilde{\phi}\) would have to have \(\widetilde{\phi}(u) = \lambda^6 u\) so as to commute with \(d\).
But then \(\widetilde{\phi}(uy) = \lambda^9 uy\) whereas \(\phi(uy) = \lambda^8 uy\).
\end{rmk}

\begin{prop}\label{prop: non-formal R}
The group \(R(X)\) is isomorphic to
\[
    \GL_1^2 \times \GL_2.
\]
A section of the surjection \(\Aut \Lambda \to R(X)\) is given by letting the two \(\GL_1\) factors act by scalar multiplication on \(x\) and on \(w\), respectively, and by letting \(A \in \GL_2\) act via the standard representation on the linear span of \(y\) and \(z\), and by scalar multiplication by \(\det A\) on \(u\).
The subgroup \(\Gamma(X)\) of \(R(X)\) corresponds to
\[
    \left(\Z^\times\right)^2 \times \GL^\Sigma_2(\Z)
\]
under this isomorphism.
\end{prop}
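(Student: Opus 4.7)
The plan is first to compute $\Aut \Lambda$ explicitly and identify its unipotent radical, thereby obtaining $R(X) \cong \GL_1^2 \times \GL_2$ together with the stated section; then to determine the image $\Gamma(X)$ by analyzing which elements of $R(X)(\Z)$ are induced by actual self-homotopy equivalences of $X$, using the Postnikov $k$-invariants of $X$.

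For the first step, any $\phi \in \Aut \Lambda$ is determined by its action on the generators. The compatibility $d\phi(u) = \phi(du) = \phi(y)\phi(z)$ forces the $x$-coefficients of $\phi(y)$ and $\phi(z)$ to vanish; indeed, after expanding $\phi(y)\phi(z)$, the requirement that the coefficients of $xy$ and $xz$ be zero, together with invertibility of $\phi$, implies this. Consequently, the matrix of $\phi$ on $\langle x, y, z \rangle$ is block upper triangular with a $1 \times 1$ block $\lambda_1$ and a $2 \times 2$ block $A \in \GL_2$, and $\phi(u) = \det(A)\,u$. The remaining freedom consists of a scalar $\lambda_2$ acting on $w$, three coefficients for the $xy, xz, yz$ contributions to $\phi(w)$, and two parameters for the $\langle y, z \rangle$-components of $\phi(x)$. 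The assignment $\phi \mapsto (\lambda_1, \lambda_2, A)$ is then a surjective homomorphism of algebraic groups from $\AUT \Lambda$ onto $\GL_1^2 \times \GL_2$, split by the prescription in the statement, with kernel $K$ that is easily checked to be unipotent. Since the quotient is reductive, $K$ must coincide with the unipotent radical $\AUT_u \Lambda$, and hence $R(X) \cong \GL_1^2 \times \GL_2$.

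For $\Gamma(X) \leqslant R(X)$, I would use that $X$ has only three nonvanishing homotopy groups $\pi_3 = \Z^3$, $\pi_5 = \Z$, $\pi_6 = \Z$, with the only nontrivial Postnikov $k$-invariant being the cup product class $yz \in H^6(K(\Z, 3)^3; \Z)$ from the $3$-type to the $5$-type; the $5$-to-$6$ stage has trivial $k$-invariant because $K(\Z, 6)$ is a direct factor of $X$. Writing the action of a self-equivalence on $\pi_3$ as a matrix, its image in $R(X)$ is a triple $(\lambda_1, \lambda_2, A) \in \{\pm 1\}^2 \times \GL_2(\Z)$, subject to the constraint $\phi^*(yz) = \pm yz$ in $H^6(K(\Z, 3)^3; \Z)$. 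Expanding $\phi^*(y)\phi^*(z)$ integrally and using that $x^2, y^2, z^2 \in H^6(K(\Z, 3); \Z)$ are $2$-torsion, the vanishing of the $xy, xz$ coefficients together with invertibility of $A$ forces the $x$-components of $\phi^*(y), \phi^*(z)$ to vanish, while the vanishing of the $x^2, y^2, z^2$ coefficients modulo $2$ is precisely the condition that each row of the integral homology matrix $A$ contains exactly one odd entry, i.e., $A \in \GL_2^\Sigma(\Z)$.

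Conversely, every $(\lambda_1, \lambda_2, A) \in (\Z^\times)^2 \times \GL_2^\Sigma(\Z)$ is realizable: $\lambda_1$ and $\lambda_2$ via $\pm \id$ on the $K(\Z, 3)$ and $K(\Z, 6)$ factors, and $A$ by lifting along the fibration $K(\Z, 5) \to F \to K(\Z, 3)^2$ classified by $\mu$. The parity conditions guarantee $\mu \circ A = \det(A) \cdot \mu$ in $H^6(K(\Z, 3)^2; \Z)$, so either $A$ itself (when $\det A = 1$) or the pair $(A, -\id_{K(\Z, 6)})$ (when $\det A = -1$) covers a self-equivalence of $F$, and the sign $\det A$ can then be absorbed into an independent action on the $K(\Z, 6)$-factor of $X$. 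The main technical step is the integral cohomology calculation linking the $k$-invariant constraint to the parity condition defining $\GL_2^\Sigma(\Z)$; this relies on a careful analysis of the $2$-torsion in $H^6(K(\Z, 3); \Z)$ and its behavior under pullback.
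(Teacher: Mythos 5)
Your determination of $R(X)$ follows the paper's route: read off $\Aut\Lambda$ from the generator-level constraint $d\phi(u)=\phi(y)\phi(z)$, identify the unipotent kernel of the resulting projection to $\GL_1^2\times\GL_2$, and exhibit the stated section. The paper organizes the same computation via two lemmas (first $\Aut\Lambda V\cong\GL(V)\ltimes\Hom(V^6,\Lambda^2V^3)$, then the condition for an element of $\GL(V)$ to commute with $d$), but the substance is identical. Where you genuinely diverge is the computation of $\Gamma(X)$: you extract the realizability constraint from the Postnikov $k$-invariant $yz\in H^6(K(\Z,3)^3;\Z)$ and the lifting obstruction across the fibration $K(\Z,5)\to F\to K(\Z,3)^2$, whereas the paper computes $H^{\leqslant 6}(X;\Z)$ directly (which requires a Serre spectral sequence argument for $H^*(F;\Z)$) and characterizes $[X,X]$ as quadruples $(\bar x',\bar y',\bar z',\bar w')$ with $\bar y'\smile\bar z'=0$. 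Both routes hinge on $\iota^2$ generating $H^6(K(\Z,3);\Z)\cong\Z/2$ and yield the parity conditions $ac,bd$ even; your route is arguably more elementary (only needing $H^6$ of Eilenberg--MacLane spaces, not $H^*(F;\Z)$), but requires some care to justify that Postnikov-tower automorphisms correctly compute $\piaut{X}$, while the paper's treatment of $[X,X]$ is more immediate.

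A few details in your write-up should be tightened. The parity conditions $ac\equiv 0$, $bd\equiv 0 \pmod 2$ are column conditions on $A=\begin{psmallmatrix}a&b\\c&d\end{psmallmatrix}$ in your presentation; they are equivalent to the defining row condition of $\GL_2^\Sigma(\Z)$ only because $\det A=\pm 1$ forbids a row (or column) of two even entries, which deserves a sentence. In your converse, the fiber of $F\to K(\Z,3)^2$ is $K(\Z,5)$, not $K(\Z,6)$, so the fiber automorphism you pair with $A$ should be $-\id_{K(\Z,5)}$; and the remark about "absorbing $\det A$ into the $K(\Z,6)$-factor" is a red herring --- the action on $\pi_5$ is simply forced to be $\det A$, consistent with the section of $\Aut\Lambda\to R(X)$ which scales $u$ by $\det A$, and is entirely independent of the scalar $\lambda_2$ on the $K(\Z,6)$-factor. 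These are slips rather than gaps, and they do not affect the correctness of the argument.
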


\begin{rmk}
In \cite[Example~6.7]{gtht00} the authors construct two automorphisms \(\phi, \psi \in \Aut \Lambda\). \(\phi\) is the identity on all generators but \(w\), where \(\phi(w) = w+xy\), and \(\psi\) is the identity on all generators but \(x\), where \(\psi(x) = x+z\).
As observed in \emph{loc. cit.}, these two automorphisms span a copy of \(\Z^2\) in \(\Aut^h \Lambda \cong \piaut{X_\Q}\), but there exists no pair \((\phi', \psi')\) of homotopic automorphisms (or even automorphisms with the same effect on \(H^*(\Lambda)\) as \(\phi\) and \(\psi\), respectively) which would already commute in \(\Aut \Lambda\).
This shows that we cannot in general expect to be able to lift the action of \(\piaut{X_\Q}\) on \(\pi_*X\otimes\Q\) or on \(H^*(X; \Q)\) to an action on the minimal Sullivan model.
It is clear from the proof of Proposition~\ref{prop: non-formal R} that both \(\phi\) and \(\psi\) lie in the kernel of \(\Aut \Lambda V \to R(X)\), so they do not obstruct the existence of an action of \(R(X)\) on \(\Lambda\).
\end{rmk}

\begin{prop}\label{prop: non-formal g}
The dg Lie algebra \(\lie g(X)\) is not formal: the Massey product
\[
    \left\langle y\frac{\partial}{\partial x}, x\frac{\partial}{\partial u}, z\frac{\partial}{\partial w}\right\rangle
\]
is non-trivial.
\end{prop}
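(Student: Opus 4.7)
The plan is to exhibit explicit bounding cochains for the two bracketings needed to define the Massey product, compute a representative, and verify it is non-zero modulo the indeterminacy. Writing $a = y\frac{\partial}{\partial x}$, $b = x\frac{\partial}{\partial u}$, $c = z\frac{\partial}{\partial w}$, I would first check that these are cycles of $\Der \Lambda$, which is immediate from $du = yz$ and the vanishing of $d$ on $x, y, z, w$. To see that they lie in $\lie g(X) = \nil \Der \Lambda$: $b$ and $c$ have strictly positive homological degree ($2$ and $3$, respectively) and so lie in the nilradical automatically, while $a$ has homological degree $0$ and acts on $H^*(X;\Q)$ by $[x] \mapsto [y]$ and annihilates the remaining cohomology generators, which is a nilpotent action, so $a$ lies in the Lie algebra of the unipotent radical by Lemma~\ref{lemma:nil der}.

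Next, a direct computation shows $[b, c] = 0$ on the chain level, so the corresponding bounding cochain may be taken to be $s = 0$. The other bracket evaluates to $[a, b] = y\frac{\partial}{\partial u}$, and the crucial point is that it is a boundary: applying $[d,-]$ to $\frac{\partial}{\partial z}$ and using $du = yz$ shows that $\partial(\frac{\partial}{\partial z})$ is a non-zero scalar multiple of $y\frac{\partial}{\partial u}$, so after rescaling we obtain a derivation $t$ of cohomological degree $-3$ with $\partial t = [a, b]$. A representative of the Massey product is then $[t, c]$, which evaluates on $w$ to a non-zero scalar (obtained from $\frac{\partial}{\partial z}(z) = 1$) and to zero on all other generators of $\Lambda$; that is, $[t, c]$ is a non-zero scalar multiple of $\frac{\partial}{\partial w}$.

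It remains to verify that this class is non-zero in $H_6(\lie g(X))$ and that the indeterminacy vanishes. For non-vanishing, $\Lambda$ has generators only in cohomological degrees $3, 5, 6$, so there are no derivations of cohomological degree $-7$ whatsoever, and therefore $\frac{\partial}{\partial w}$ cannot be a boundary. For the indeterminacy, one enumerates $H_3(\lie g(X))$ (spanned by $\frac{\partial}{\partial x}, x\frac{\partial}{\partial w}, y\frac{\partial}{\partial w}, z\frac{\partial}{\partial w}$, the cycles in degree $-3$) and $H_6(\lie g(X)) = \Q\cdot[\frac{\partial}{\partial w}]$ (again by the same degree argument), and checks that each bracket $[\xi, c]$ for $\xi \in H_3(\lie g(X))$ and $[a, \eta]$ for $\eta \in H_6(\lie g(X))$ vanishes by direct inspection on the generators of $\Lambda$.

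The main obstacle is sign-bookkeeping: the cohomological grading of $\Lambda$ and the opposite homological grading of $\Der \Lambda$ interact non-trivially with the Koszul signs in the bracket and in the differential $[d,-]$, and the sign in the Massey product formula must be tracked as well. Once the conventions are pinned down, however, every claim above reduces to evaluating a derivation on one or two generators of $\Lambda$, and the answer falls out immediately.
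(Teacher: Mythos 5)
Your proposal is correct and follows essentially the same route as the paper: compute $[a,b]=y\tfrac{\partial}{\partial u}=\pm\partial(\tfrac{\partial}{\partial z})$ and $[b,c]=0$, obtain the representative $\pm[\tfrac{\partial}{\partial z},c]=\pm\tfrac{\partial}{\partial w}$, observe this is a non-trivial cycle by the degree constraint on $\Lambda$'s generators, and show the indeterminacy $[a,H_6]+[H_3,c]$ vanishes by direct inspection. Two minor remarks: the paper's stated indeterminacy includes an extra (and superfluous) term $[x\tfrac{\partial}{\partial u},H_4]$, which is harmless since $\lie g(X)_4=0$, whereas your formula is the standard one; and your sign concerns are immaterial here, exactly as you anticipated, since the conclusion only needs the representative to be a \emph{non-zero} scalar multiple of $\tfrac{\partial}{\partial w}$.
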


The rest of the section is devoted to proving Propositions \ref{prop: non-formal R} and \ref{prop: non-formal g}.

We write \(V\) for the graded vector subspace of \(\Lambda\) which is spanned by the generators \(x, y, z, u\) and \(w\), and we write \(\Lambda V\) for the underlying graded commutative algebra of \(\Lambda\).
The group \(\GL(V)\) of graded linear automorphisms of \(V\) evidently injects into the group \(\Aut \Lambda V\) of graded commutative automorphisms of \(\Lambda V\), and we will abuse notation by referring to its image by \(\GL(V)\) as well.

\begin{lemma}\label{lemma: uniradical of Aut Lambda V}
The automorphism group \(\Aut\Lambda V\) of the underlying graded commutative algebra is a semidirect product
\[
    \GL(V) \ltimes \Hom(V^6, \Lambda^2 V^3)
\]
where the abelian group \(\Hom(V^6, \Lambda^2 V^3)\) injects into \(\Aut \Lambda V\) by sending \(f \colon V^6 \to \Lambda^2 V^3\) to the automorphism that is the identity on all generators except \(w\) and that takes \(w\) to \(w + f(w)\).
\end{lemma}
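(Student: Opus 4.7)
The plan is to analyze an arbitrary graded commutative algebra automorphism $\phi$ of $\Lambda V$ generator by generator, using the degree structure of $V$ (two odd degrees $3, 5$ and one even degree $6$) to pin down where nontrivial freedom in $\phi$ can occur.

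First I would compute the relevant graded pieces of $\Lambda V$: since the odd generators contribute exteriorly and $w$ polynomially, one has $(\Lambda V)^3 = V^3$, $(\Lambda V)^5 = V^5$, and
\[ (\Lambda V)^6 = V^6 \oplus \Lambda^2 V^3. \]
Hence for any automorphism $\phi$, the images $\phi(x), \phi(y), \phi(z)$ must lie in $V^3$ and $\phi(u) \in V^5$, so $\phi$ restricts to linear automorphisms of $V^3$ and $V^5$. On the other hand, $\phi(w) = \mu w + \xi$ for some $\mu \in \Q^\times$ and $\xi \in \Lambda^2 V^3$. This exhibits the data of $\phi$ as a pair consisting of an element of $\GL(V) = \GL(V^3) \times \GL(V^5) \times \GL(V^6)$ together with a linear map $f \colon V^6 \to \Lambda^2 V^3$.

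Next, I would verify that the collection of automorphisms with trivial linear part (\emph{i.e.} those with $\phi|_V = \id$ except for $\phi(w) = w + f(w)$) forms an abelian subgroup isomorphic to $\Hom(V^6, \Lambda^2 V^3)$ under addition: this is an immediate calculation, since such a $\phi$ acts as the identity on $\Lambda^2 V^3$ and so composition of two such automorphisms simply adds the maps $f$. One checks by direct computation that this subgroup is normalized by $\GL(V)$, with conjugation action
\[ (A, \lambda, \mu) \cdot f = \mu^{-1}\,\Lambda^2(A|_{V^3}) \circ f, \]
and that the obvious inclusion $\GL(V) \hookrightarrow \Aut \Lambda V$ (extending a graded linear map of $V$ to $\Lambda V$ multiplicatively) is a section of the quotient by the normal subgroup. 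Finally, given an arbitrary $\phi = (A, \lambda, \mu, \xi)$, one sees it is uniquely the product of its linear part $(A, \lambda, \mu) \in \GL(V)$ with the automorphism $w \mapsto w + \mu^{-1}\xi$ in $\Hom(V^6, \Lambda^2 V^3)$, establishing the semidirect product decomposition.

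The only step requiring any care is the composition bookkeeping needed to verify the semidirect product action, but this is a short and mechanical calculation once the data of an automorphism has been parametrized. There is no serious obstacle; the argument is essentially forced by the graded dimensions of $\Lambda V$ in low degrees.
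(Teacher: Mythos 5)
Your proof is correct and takes essentially the same approach as the paper. Both identify the kernel of the map to $\GL(V)$ with $\Hom(V^6, \Lambda^2 V^3)$ by observing that an automorphism with trivial linear part must fix $x,y,z,u$ (since $(\Lambda V)^3 = V^3$ and $(\Lambda V)^5 = V^5$) and is determined by $\phi(w) - w \in \Lambda^2 V^3$; the only cosmetic difference is that the paper obtains the retraction $\Aut \Lambda V \twoheadrightarrow \GL(V)$ abstractly via the identification $V \cong Q\Lambda V$ (indecomposables), while you verify the same thing by the explicit degree-by-degree computation, which is if anything a bit more transparent.
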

\begin{proof}
There is an obvious identification of \(V\) with the indecomposables \(Q\Lambda V = I/I^2\), where \(I = (\Lambda V)^{>0}\) is the augmentation ideal.
This yields a retraction \(\Aut \Lambda V \twoheadrightarrow \GL(Q\Lambda V) \cong \GL(V)\).
Let \(\Aut_1\Lambda V\) be the kernel of this retraction.
Every \(\phi \in \Aut_1\Lambda V\) fixes \(x, y, z\) and \(u\), and is determined by the value \(\phi(w)-w\in\Lambda^2V^3\subset(\Lambda V)^6\).
It can be seen that there is an isomorphism
\[
    \Aut_1\Lambda V \cong \Hom(V^6, \Lambda^2 V^3)
\]
of groups with a \(\GL(V)\)-action which sends \(\phi \in \Aut_1\Lambda V\) to the homomorphism \(w \mapsto \phi(w) - w\).
\end{proof}

The group \(\Aut \Lambda\) of dgla automorphisms of \(\Lambda\) is the isotropy subgroup of the differential \(d\) inside \(\Aut \Lambda V\).
The subgroup \(\Aut_1 \Lambda V\) of the preceding lemma clearly commutes with \(d\), so it suffices to determine \(\Aut \Lambda \cap \GL(V)\).

\begin{lemma}\label{lemma: differential automs}
Let \(W = \langle y, z \rangle \subset V^3\) be the \(\Q\)-linear span of \(y\) and \(z\).
An automorphism \(\phi \in \GL(V)\) commutes with \(d\) if and only if \(W\) is \(\phi\)-stable, and \(\phi(u) = (\det \phi\rvert_W) \cdot u\).
\end{lemma}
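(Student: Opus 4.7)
The plan is to reduce the commutation relation $\phi d = d\phi$ to a single equation involving the action of $\phi$ on $V^3$, which then directly forces the stated conditions.

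First I would observe that since $\phi \in \GL(V)$ preserves degrees, and the only generators of $V$ that are not cocycles is $u$, the relation $\phi d = d\phi$ holds on $x, y, z, w$ automatically (both sides vanish). So the entire condition reduces to the single equation
\[ d\phi(u) = \phi(du) = \phi(yz) = \phi(y)\phi(z). \]
Since $V^5 = \langle u \rangle$, we may write $\phi(u) = \mu u$ for some $\mu \in \Q^\times$, so the left-hand side equals $\mu yz$. The problem reduces to determining when $\phi(y)\phi(z) = \mu \cdot yz$ in $\Lambda^2 V^3$.

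Next I would expand $\phi(y)\phi(z)$ using a matrix presentation of $\phi|_{V^3}$ in the basis $x, y, z$ and the fact that $x, y, z$ all have odd degree, so they pairwise anticommute and square to zero. This gives an explicit expansion of $\phi(y)\phi(z)$ in the basis $xy, xz, yz$ of $\Lambda^2 V^3$, whose coefficients are the three $2 \times 2$ minors of the $2\times 3$ submatrix of $\phi|_{V^3}$ formed by the rows representing $\phi(y)$ and $\phi(z)$. The equation $\phi(y)\phi(z) = \mu yz$ is equivalent to the vanishing of the two minors involving the $x$-column, together with the equation that the third minor equals $\mu$.

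Finally I would observe that if the $x$-component of either $\phi(y)$ or $\phi(z)$ is nonzero, the vanishing of both $x$-minors forces the $2 \times 3$ matrix to have rank $\leqslant 1$, making the $yz$-minor also vanish, which contradicts $\mu \neq 0$. Hence both $x$-components must vanish, which is exactly the statement that $W = \langle y, z\rangle$ is $\phi$-stable, and the remaining equation reads $\mu = \det \phi|_W$. The converse direction is immediate by reading the same computation backwards. No step presents a real obstacle; the only thing to be careful about is the sign bookkeeping when anticommuting the odd-degree generators, but since only the coefficient of $yz$ ultimately matters the signs cause no trouble.
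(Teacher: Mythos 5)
Your proof is correct and takes essentially the same approach as the paper: reduce the condition $\phi d = d\phi$ to the single equation $\phi(y)\phi(z) = \mu yz$ coming from the generator $u$, then deduce that the 2-plane $W$ must be $\phi$-stable. The paper phrases the last step abstractly via $\phi(\Lambda^2 W) = \Lambda^2\phi(W) = \Lambda^2 W \Leftrightarrow \phi(W) = W$ (injectivity of the Plücker embedding of $\operatorname{Gr}(2,3)$), whereas you unwind it in coordinates with a rank argument on the $2\times 3$ minor matrix; the content is the same.
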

\begin{proof}
Note that the line \(\Lambda^2 W\) is precisely the span of \(yz\), and also the image of the differential \(d \colon (\Lambda V)^5 \hookrightarrow (\Lambda V)^6\).
Hence \(\phi\) commutes with \(d\) if and only if it carries \(\Lambda^2 W\) to itself and it acts on \(\Lambda^2 W\) and \((\Lambda V)^5\) by the same scalar.
But \(\phi(\Lambda^2 W) = \Lambda^2 \phi(W)\) is equal to \(\Lambda^2 W\) if and only if \(\phi(W) = W\).
If this is the case, then \(\phi\) scales \(\Lambda^2 W\) by \(\det \phi\rvert_W\).
\end{proof}

\begin{proof}[Proof of Proposition~\ref{prop: non-formal R}]
The unipotent radical of \(\Aut \Lambda V\), which we identified in Lemma~\ref{lemma: uniradical of Aut Lambda V}, is contained in \(\Aut \Lambda\), and hence in its unipotent radical.
Thus the maximal reductive quotient of \(\Aut \Lambda\) agrees with that of \(\Aut \Lambda \cap \GL(V)\).
From Lemma~\ref{lemma: differential automs} it is easily seen that the unipotent radical of \(\Aut \Lambda \cap \GL(V)\) consists of the automorphisms that fix all generators except \(x\) and that send \(x\) to \(x\) plus an element of \(W\), and the maximal reductive quotient is as described in the statement of the proposition.

To determine \(\Gamma(X)\), recall that \(X\) is the homotopy fiber of the \(6\)th integral cohomology class
\[
    K(\Z, 3)^3 \times K(\Z, 6) \longrightarrow K(\Z, 6)
\]
given by the cup product of the fundamental classes of two of the \(K(\Z, 3)\) factors.

The integral cohomology ring of \(X\) agrees up to dimension 6 with
\[
    \Lambda_\Z(\bar x, \bar y, \bar z, \bar w)/(\bar y \bar z, 2\bar x^2, 2\bar y^2, 2\bar z^2)
\]
where the generators \(\bar x, \bar y\) and \(\bar z\) live in degree 3, \(\bar w\) lives in degree 6, and each generator maps to the corresponding element of \(H^*(\Lambda)\cong H^*(X; \Q)\) under the change-of-coefficient homomorphism.
Explicitly, \(H^3(X) \cong \Z^3\) is spanned by \(\bar x, \bar y, \bar z\), and \(H^6(X) \cong \Z^3 \oplus (\Z/2)^3\) is spanned by \(\bar x \bar y, \bar x \bar z, \bar w\) and \(\bar x^2, \bar y^2, \bar z^2\).

Thus the homotopy class of an endomorphism \(f \colon X \to X\) is uniquely determined by the quadruple \((f^*\bar x, f^*\bar y, f^*\bar z, f^*\bar w)\in H^3(X)^3\times H^6(X)\), and a quadruple \((\bar x', \bar y', \bar z', \bar w')\) is induced by an endomorphism if \(\bar y'\smile \bar z'=0\).

Thus an element \((\lambda, A, \mu) \in \GL_1 \times \GL_2 \times \GL_1 \cong R(X)\) can be realised by a homotopy automorphism of \(X\) if and only if \(\lambda, \mu \in \Z\), the matrix \(A\) has integer entries, and each row of \(A\) contains at least one even number (since \((a \bar y + b \bar z)(c \bar y + d \bar z) = ac \bar y^2 + bd \bar z^2 \in H^6(X)\) is non-zero if \(ac\) or \(bd\) are odd).
\end{proof}

\begin{proof}[Proof of Proposition~\ref{prop: non-formal g}]
We begin by describing in some detail the structure of the dg Lie algebra \(\lie g(X)\).
In positive degrees, \(\lie g(X)\) is just the full dg Lie algebra of derivations of the free cdga \(\Lambda V\), and it is easy to list the basis in each degree.

In degree \(0\), \(\lie g(X)\) is the Lie algebra of the unipotent radical of \(\Aut \Lambda\).
This radical sits in a (split) short exact sequence
\[
    1 \longrightarrow \Hom(V^6, \Lambda^2 V^3) \longrightarrow \Aut_u \Lambda \longrightarrow \Hom(\langle x \rangle, W) \longrightarrow 1
\]
where the left-hand term is the unipotent radical of \(\Aut \Lambda V\) and the right-hand term is the unipotent radical of \(\Aut \Lambda \cap \GL(V)\).
From this, a basis for \(\lie g_0(X)\) can be read off.
We list bases in all degrees in the table below.
\begin{center}
\begin{tabular}{c|l}
    degree & basis\\
    \hline
    0 & \(y \frac{\partial}{\partial x},\; z \frac{\partial}{\partial x},\; xy \frac{\partial}{\partial w},\; xz\frac{\partial}{\partial w},\; yz\frac{\partial}{\partial w}\)\\[1ex]
    1 & \(u\frac{\partial}{\partial w}\)\\[1ex]
    2 & \(x\frac{\partial}{\partial u},\; y\frac{\partial}{\partial u},\; z\frac{\partial}{\partial u}\)\\[1ex]
    3 & \(x\frac{\partial}{\partial w},\; y\frac{\partial}{\partial w},\; z\frac{\partial}{\partial w},\; \frac{\partial}{\partial x},\; \frac{\partial}{\partial y},\; \frac{\partial}{\partial z}\)\\[1ex]
    5 & \(\frac{\partial}{\partial u}\)\\[1ex]
    6 & \(\frac{\partial}{\partial w}\)
\end{tabular}
\end{center}
The differential \(\delta\) is given by
\[
    u\frac{\partial}{\partial w} \longmapsto yz\frac{\partial}{\partial w},\qquad
    \frac{\partial}{\partial y} \longmapsto -z\frac{\partial}{\partial u},\qquad
    \frac{\partial}{\partial z} \longmapsto y\frac{\partial}{\partial u};
\]
it vanishes on all the other generators.

Note that
\[
    \left[ y\frac{\partial}{\partial x}, x\frac{\partial}{\partial u} \right] = y\frac{\partial}{\partial u} = \delta \left(\frac{\partial}{\partial z}\right)
\]
while the other two pairs bracket to zero.
Thus the Massey product contains
\[
    \left[ \frac{\partial}{\partial z}, z \frac{\partial}{\partial w} \right] = \frac{\partial}{\partial w}.
\]
The indeterminacy is equal to the subset
\[
    \left[ y\der{x}, H_6(\lie g(X)) \right] + \left[ x\der{u}, H_4(\lie g(X)) \right] + \left[ z\der{w}, H_3(\lie g(X)) \right]
\]
of \(H_6(\lie g(X))\).
This is easily seen to only contain \(0\).
Thus the Massey product is indeed non-trivial as \(\der{w}\) is a non-trivial cycle, obstructing formality of \(\lie g(X)\).
\end{proof}

\printbibliography
\end{document}